\newtheorem{theorem} {{\textsf{Theorem}}}
\newtheorem{proposition}[theorem]{{\textsf{Proposition}}}
\newtheorem{corollary}[theorem]{{\textsf{Corollary}}}
\newtheorem{definition}[theorem]{{\textsf{Definition}}}
\newtheorem{remark}[theorem]{{\textsf{Remark}}}
\newtheorem{lemma}[theorem]{{\textsf{Lemma}}}
\newcommand{\TPSS}{\mathbb{S}^{\hspace{.2mm}2} \mbox{$\times
\hspace{-2.6mm}_{-}$} \, \mathbb{S}^{\hspace{.1mm}1}}
\begin{document}

\title{Lower bounds for regular genus and gem-complexity of PL 4-manifolds with boundary}
\author{Biplab Basak and Manisha Binjola}

\date{}

\maketitle

\vspace{-15mm}
\begin{center}

\noindent {\small Department of Mathematics, Indian Institute of Technology Delhi, New Delhi 110016, India.$^1$}

%\end{center}

\footnotetext[1]{{\em E-mail addresses:} \url{biplab@iitd.ac.in} (B.
Basak), \url{binjolamanisha@gmail.com} (M. Binjola).}

\medskip

%\begin{center}
\date{October 28, 2020}
\end{center}

\hrule

\begin{abstract}
Let $M$ be a connected compact PL 4-manifold with boundary. In this article, we have given several lower bounds for regular genus and gem-complexity of the manifold $M$. In particular, we have proved that if $M$ is a connected compact $4$-manifold with $h$ boundary components then its gem-complexity $\mathit{k}(M)$ satisfies the following inequalities:
 $$\mathit{k}(M)\geq 3\chi(M)+7m+7h-10  \mbox{ and }\mathit{k}(M)\geq \mathit{k}(\partial M)+3\chi(M)+4m+6h-9,$$
 and its regular genus $\mathcal{G}(M)$ satisfies the following inequalities:
 $$\mathcal{G}(M)\geq 2\chi(M)+3m+2h-4\mbox{ and }\mathcal{G}(M)\geq \mathcal{G}(\partial M)+2\chi(M)+2m+2h-4,$$
 where $m$ is the rank of the fundamental group of the manifold $M$. These lower bounds enable to strictly improve previously known estimations for regular genus and gem-complexity of a PL $4$-manifold with boundary. Further, the sharpness of these bounds has also been shown for a large class of PL $4$-manifolds with boundary. 
 \end{abstract}

\noindent {\small {\em MSC 2020\,:} Primary 57Q15; Secondary 57Q05, 57K41, 05C15.

\noindent {\em Keywords:} PL 4-manifold with boundary; Crystallization; Regular genus; Gem-complexity.}

\medskip

\section{Introduction}

A crystallization $(\Gamma,\gamma)$ (cf. Definition \ref{def:crystallization}) of a connected compact PL $d$-manifold (possibly with boundary) is a certain type of edge colored graph which represents the manifold (for details and related notations we refer Subsection \ref{crystal}).  The existence of a crystallization  for every connected closed PL $d$-manifold follows from a result by Pezzana (see \cite{pe74}), and later crystallization is defined and its existence has been proved for every PL $d$-manifold with boundary (see \cite{cg80, ga83}).  The gem-complexity  of a connected compact PL $d$-manifolds $M$ (possibly with boundary) is defined as $\mathit{k}(M) = p - 1$, where $2p$ is the least number of vertices of a crystallization of $M$ (cf. Definition \ref{def:gem-complexity}). In \cite{bd14}, the authors gave a lower bound for the gem-complexity of a closed connected 3-manifold in terms of the weight of its fundamental group. In \cite{bb20}, we gave a lower bound for the gem-complexity of a connected compact 3-manifold $M$ with boundary $\partial M$ in terms of the genus of $\partial M$ and number of the boundary components of $\partial M$. In \cite{bc15}, the authors gave a lower bound for the gem-complexity of a closed connected PL 4-manifold in terms of the rank of the fundamental group of the $4$-manifold. Catalogues for lower dimensional closed connected PL manifolds via gem-complexity can be found in \cite{cc08, cc15}. The results on gem-complexity for connected compact PL 4-manifold with boundary are not well known in the literature. In this article, we have provided a nice property for the number of vertices of a crystallization of  a  connected compact PL 4-manifold with boundary (cf. Lemmas \ref{lemma:2pand2pbar} and \ref{lemma:bound}), and using this  we have given a lower bound for the gem-complexity of a connected compact PL 4-manifold $M$ with $h$ boundary components:  $\mathit{k}(M)\geq 3\chi(M)+7m+7h-10$ and $\mathit{k}(M)\geq \mathit{k}(\partial M)+3\chi(M)+4m+6h-9$, where $m$ is the rank of the fundamental group of the manifold $M$ (cf. Theorem \ref{theorem:main1}). We have also shown that these bounds are sharp for a large class of connected compact PL 4-manifolds with boundary (cf. Remark \ref{remark:gem-complexity}).

Extending the concept of genus in dimension 2, the concept of regular genus for a closed connected PL $d$-manifold, has been introduced in \cite{ga81}, which is related to the existence of regular embeddings of crystallizations of the manifold into surfaces (cf. Subsection \ref{sec:genus} for details). Later, in \cite{ga87}, the concept of regular genus has been extended for a connected compact PL $d$-manifold with boundary, for $d\geq 2$ (cf. Definition \ref{def:regular-genus}). Note that the notion of regular genus extends the classical notion of genus to arbitrary dimension: in fact, the regular genus of a closed connected orientable (resp. non-orientable) surface is same as its genus (resp. half of its genus), while the regular genus of a closed connected 3-manifold equals its Heegaard genus (see  \cite{ga81,[GG]}). It is known that, the regular genus zero characterizes the $d$-sphere among all closed connected PL $d$-manifolds (\cite{fg82}). The concept of regular genus is a very important and useful combinatorial invariant in topology. The additivity of regular genus under connected sum has been conjectured, and the associated (open) problem is significant, especially in dimension four. In fact, the additivity of regular genus for closed simply-connected PL $4$-manifolds would imply the 4-dimensional Smooth  Poincar\'{e} Conjecture (cf. \cite{cav99}). Further, a lot of theory has been established to find bounds for regular genus. Obviously, any crystallization of a given $d$-manifold $M$ yields an upper bound for the regular genus of $M$ but the problem of finding lower bounds is generally more difficult. In \cite{cav99}, the author gave a lower bound for the regular genus of a closed connected PL $4$-manifold in terms of the second Betti number of the manifold. In \cite{bc15}, the authors gave a lower bound for the regular genus of a closed connected PL $4$-manifold in terms of the Euler characteristic and the  rank of the fundamental group of the manifold. In \cite{cp90}, the authors gave two lower bounds for the regular genus of a connected compact PL $d$-manifold with boundary. They proved that for a connected compact PL $d$-manifold $M$ with boundary $\partial M$, the regular genus $\mathcal{G}(M) \geq \mathcal{G}(\partial M)$ and $\mathcal{G}(M) \geq m$,  where $m$ is the rank of the fundamental group of the manifold $M$. From \cite{ca92,cm97}, it is known that, in dimension 4, $\mathcal{G}(M) = \mathcal{G}(\partial M)$ if and only if $M$ is a connected sum of handlebodies. Thus, if $M$ is not a connected sum of handlebodies then the bound $\mathcal{G}(M) \geq \mathcal{G}(\partial M)$ will never be sharp. Recently  in \cite{cc20}, the authors studied the combinatorial properties of compact PL $4$-manifolds with boundary in a different way. For a compact $4$-manifold with  at most one non-spherical boundary component, first they constructed a singular manifold by coning off its boundary component, and then they gave a lower bound for the regular genus of the singular manifold. A lower bound problem  for regular genus of PL $d$-manifolds with boundary has a significant role in combinatorial topology because a lower bound for regular genus of PL $d$-manifolds with boundary allows a new classification problem via the `restricted gap' between the regular genus of the manifold and the obtained lower bound. For $3\leq d \leq 5$, a lot of classifying results in PL-category have been obtained for both closed connected PL $d$-manifolds and connected compact PL $d$-manifolds with boundary (cf. \cite{bb19,bb18, [C], Casali, [CG], cav99}). These results concern the case of `low' regular genus, the case of `restricted gap' between the regular genus of the manifold and the regular genus of its boundary, and the case of `restricted gap' between the regular genus and the rank of the fundamental group of the manifold. These show the importance of  lower bound problems for regular genus  in combinatorial topology, and motivate us for studying  lower bound problems for regular genus of PL $4$-manifolds with several boundary components. In this article,  we have given  lower bounds for the regular genus of a connected compact PL 4-manifold $M$ with $h$ boundary components:  $\mathcal{G}(M)\geq 2\chi(M)+3m+2h-4$ and $\mathcal{G}(M)\geq \mathcal{G}(\partial M)+2\chi(M)+2m+2h-4$ (cf. Theorem \ref{theorem:main2}).  These bounds enable to strictly improve previously known estimations for the regular genus of any connected compact PL $4$-manifolds with boundary. These bounds follow from our main result Lemma \ref{lemma:main}. In fact, Lemma \ref{lemma:main} is the heart of this article, which will be very useful to classify the connected compact PL 4-manifolds $M$ with boundary $\partial M$ via regular genera of $M$ and $\partial M$. Further, we have proved that, our bounds for regular genus are sharp for a large class of PL $4$-manifolds with boundary  (cf. Remark \ref{remark:regular-genus}).

\section{Preliminaries}

\subsection{Crystallization} \label{crystal}

Crystallization theory is a combinatorial tool for the representaion of piecewise-linear (PL) manifolds of arbitrary dimension. A multigraph is a graph where multiple edges are allowed but loops are restricted. For a multigraph $\Gamma= (V(\Gamma),E(\Gamma))$, a surjective map $\gamma : E(\Gamma) \to \Delta_d:=\{0,1, \dots , d\}$ is called a proper edge-coloring if $\gamma(e_1) \ne \gamma(e_2)$ for any pair $e_1,e_2$ of adjacent edges. The members of the set $\Delta_d$ are called the {\it colors} of $\Gamma$. A graph $(\Gamma,\gamma)$ is called {\it $(d+1)$-regular} if degree of each vertex is $d+1$ and is said to be {\it $(d+1)$-regular with respect to a color $c$} if the graph is $d$-regular after removing all the $c$- colored edges. We refer to \cite{bm08} for the standard terminology on graphs.

A graph $(\Gamma,\gamma)$ is called {\it $(d+1)$-regular colored graph} if $\Gamma$ is a $(d+1)$-regular and $\gamma$ is a proper edge-coloring.  A {\it $(d+1)$-colored graph with boundary} is a pair $(\Gamma,\gamma)$, where $\Gamma$ is a $(d+1)$-regular graph with respect to a color $c\in \Delta_d$ but not $(d+1)$-regular and $\gamma$ is a proper edge-coloring. If $(\Gamma,\gamma)$ is a $(d+1)$-regular colored graph or a $(d+1)$-colored graph with boundary then we simply call $(\Gamma,\gamma)$ as a $(d+1)$-colored graph. For given $B \subseteq \Delta_d$ with cardinality $k$ , the graph $\Gamma_B =(V(\Gamma), \gamma^{-1}(B))$ is a $k$-colored graph with edge-coloring $\gamma|_{\gamma^{-1}(B)}$. For a color set $\{i_1,i_2,\dots,i_k\} \subset \Delta_d$, $\Gamma_{\{i_1,i_2, \dots, i_k\}}$ denotes the subgraph of $\Gamma$ restricted to the color set  $\{i_1,i_2,\dots,i_k\}$  and $g_{i_1i_2 \cdots i_k}$ (or sometimes $g(\Gamma_{\{i_1,i_2, \dots, i_k\}})$) denotes the number of connected components of the subgraph $\Gamma_{\{i_1, i_2, \dots, i_k\}}$. Let 
$\dot g_{i_1i_2 \cdots i_k}$ denote the number of $k$-regular connected components of the graph $\Gamma_{\{i_1, i_2, \dots, i_k\}}$, i.e., $\dot g_{ij}$ denotes the number of $\{i,j\}$-colored cycles in $\Gamma$.  Let $2\overline{p}$ and $2\dot{p}$ denote the number of boundary vertices and internal vertices respectively. Then $2p=2\overline{p}+2\dot{p}$ is the total number of vertices of $(\Gamma,\gamma)$. A graph $(\Gamma,\gamma)$ is called {\it contracted} if subgraph $\Gamma_{\hat{c}}:=\Gamma_{\Delta_d\setminus \{c\}}$ is connected for all $c$.

Let $\mathbb{G}_d$ denote the set of all graphs $(\Gamma,\gamma)$ which are $(d+1)$-regular with respect to the fixed color $d$. In particular, $\mathbb{G}_d$ contains all $(d+1)$-regular colored graphs. If $(\Gamma,\gamma)\in \mathbb{G}_d$ then the vertex with degree $d+1$ is called internal vertex and the vertex with degree $d$ is called boundary vertex.  A boundary graph $(\partial\Gamma,\partial \gamma)$ for the graph $(\Gamma,\gamma) \in \mathbb{G}_d $ is defined as follows:

\begin{itemize}
\item{} there is a bijection between $V(\partial \Gamma)$ and the set of boundary vertices of $\Gamma$;

\item{} for $0\leq j \leq d-1$, vertices $u_1$ and $u_2$ are joined by $j$-colored edge in $\partial \Gamma$ if and only if corresponding vertices in $\Gamma$ are joined by a path formed by $j$ and $d$-colored edges alternatively.
\end{itemize}

Note that, if $(\Gamma,\gamma)$ is $(d+1)$-regular then $\partial \Gamma = \emptyset$ and $(\Gamma,\gamma)\in \mathbb{G}_d$. For each  $(\Gamma,\gamma) \in \mathbb{G}_d$, a corresponding $d$-dimensional simplicial cell-complex ${\mathcal K}(\Gamma)$ is determined as follows:

\begin{itemize}
\item{} corresponding to each vertex $u\in V(\Gamma)$, take a $d$-simplex $\sigma(u)$ with vertices labeled by $\Delta_d$;

\item{} for each $j$-colored edge between $u,v\in V(\Gamma)$, identify ($d-1$)-faces of simplices $\sigma(u)$ and $\sigma(v)$ opposite to $j$-labeled vertices such that the vertices with same label identify with each other.
\end{itemize}

We refer to \cite{bj84} for CW-complexes and related terminologies. We say  $(\Gamma,\gamma)$ {\it represents} a PL $d$-manifold $M$ (possibly with boundary) if the geometrical carrier $|{\mathcal K}(\Gamma)|$ is PL homeomorphic to $M$. It can be easily seen that  $|{\mathcal K}(\Gamma)|$ is orientable if and only if the graph $\Gamma$ is bipartite. If  $(\Gamma,\gamma)\in \mathbb{G}_d$ represents a PL $d$-manifold with boundary then each component of its boundary-graph $(\partial \Gamma,\partial \gamma)$ represents a component of $\partial M$.

 Let $(\Gamma,\gamma)\in \mathbb{G}_d$ represent a connected compact PL $d$-manifold with $h$ boundary  components ($h\geq1$). Then  $\mathcal{K}(\Gamma)$ has at least $dh+1$ vertices. It is easy to see that $\Gamma_{\hat d}$ is connected and each component of $\partial \Gamma$ is contracted if and only if $\mathcal{K}(\Gamma)$ has exactly $dh+1$ vertices.  If $(\Gamma,\gamma)$ represents a connected closed PL $d$-manifold then $\mathcal{K}(\Gamma)$ has at least $d+1$ vertices.
\begin{definition}[\cite{ga83}]\label{def:crystallization}
 A contracted graph $(\Gamma,\gamma)$ is called a crystallization of a connected closed PL $d$-manifold $M$ if  $(\Gamma,\gamma)$ represents $M$ (i.e., $\mathcal{K}(\Gamma)$ has exactly  $d+1$ vertices).
A connected graph  $(\Gamma,\gamma) \in \mathbb{G}_d$ is called a crystallization of a connected compact PL $d$-manifold $M$ with $h$ boundary components ($h\geq 1$) if   $(\Gamma,\gamma)$ represents $M$  and $\mathcal{K}(\Gamma)$ has exactly  $dh+1$ vertices.
\end{definition}

Thus, if $(\Gamma,\gamma) \in \mathbb{G}_d$ is a crystallization of a connected compact PL $d$-manifold $M$ with $h$ boundary components ($h\geq 1$) then  $g(\Gamma_{\hat{d}})=1$ and $g(\Gamma_{\hat{c}})=h$ for $0\leq c \leq d-1$. In other words, $\mathcal K(\Gamma)$ has exactly $h$ number of vertices labeled by the color $c$, and each boundary component contains exactly one vertex labeled by the color $c$,  for $0\leq c \leq d-1$.
Note that, $(\Gamma,\gamma)$ is contracted if and only if $h= 1$.

The starting point of the whole crystallization theory is the following Pezzana's Existence Theorem (cf. \cite{pe74}).

\begin{proposition}
 Every closed connected PL $n$-manifold admits a crystallization.
\end{proposition}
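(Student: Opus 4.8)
The strategy is to first produce \emph{some} $(d+1)$-colored graph representing $M$ --- with no contractedness assumed --- and then to simplify it by elementary combinatorial moves until it becomes contracted; the resulting graph is then a crystallization of $M$.

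To get a colored graph, I would fix any finite triangulation $T$ of the closed PL manifold $M$ and pass to its first barycentric subdivision $\mathrm{sd}(T)$. The point is that $\mathrm{sd}(T)$ is \emph{balanced}: coloring each vertex $\hat\sigma$ of $\mathrm{sd}(T)$ (the barycenter of a simplex $\sigma$ of $T$) by the integer $\dim\sigma\in\Delta_d$ gives a proper vertex-coloring using all colors, because every edge of $\mathrm{sd}(T)$ joins the barycenters of a simplex and one of its proper faces, whose dimensions differ, while $T$ has simplices of every dimension $0,\dots,d$. Hence every $d$-simplex of $\mathrm{sd}(T)$ uses each color exactly once, and its dual graph $\Gamma$ --- one vertex per $d$-simplex, and one $i$-colored edge joining the two $d$-simplices that share the $(d-1)$-face missing color $i$ (two such simplices exist and are unique since $M$ is closed) --- is $(d+1)$-regular with a proper edge-coloring. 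By construction $\mathcal{K}(\Gamma)\cong\mathrm{sd}(T)$, so $(\Gamma,\gamma)$ represents $M$, and $\Gamma$ is connected because $M$ is.

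Next I would make $(\Gamma,\gamma)$ contracted. If it is not, some $\Gamma_{\hat c}$ is disconnected; since $\Gamma$ is connected, some edge $e$ joins two distinct components of $\Gamma_{\hat c}$, and $e$ is necessarily $c$-colored (any edge of color $\neq c$ has both its endpoints in one component of $\Gamma_{\hat c}$). Thus $e=\{u,v\}$ is a \emph{proper $1$-dipole} --- a $c$-colored edge whose two endpoints lie in different components of $\Gamma_{\hat c}$ --- and I cancel it: delete $u$ and $v$ and, for each color $i\neq c$, replace the two $i$-colored edges formerly incident to $u$ and to $v$ by a single $i$-colored edge joining their other endpoints. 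This again yields a connected $(d+1)$-colored graph representing $M$, now with two fewer vertices. Since the vertex count strictly decreases, the process stops; when it does, no proper $1$-dipole remains, which by the observation above forces every $\Gamma_{\hat c}$ to be connected. So the final graph is a contracted $(d+1)$-colored graph representing $M$, that is, a crystallization of $M$.

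Everything above is routine except for one input, which is where the real work lies and hence the main obstacle in a self-contained treatment: the invariance of the represented PL manifold (and of connectedness) under cancellation of a proper $1$-dipole --- the classical dipole lemma of Ferri and Gagliardi. It is proved by observing that, precisely because $u$ and $v$ lie in \emph{different} components of $\Gamma_{\hat c}$, the simplices $\sigma(u)$ and $\sigma(v)$ carry \emph{distinct} $c$-labeled vertices; hence $\sigma(u)\cup\sigma(v)$ is a PL $d$-ball in $|\mathcal{K}(\Gamma)|$ meeting the closure of its complement in a $(d-1)$-sphere, and deleting it and regluing that sphere as the dipole move prescribes produces a PL-homeomorphic geometric carrier.
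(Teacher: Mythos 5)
The paper gives no proof of this statement: it is Pezzana's existence theorem, quoted from \cite{pe74} as the starting point of crystallization theory, so there is no argument in the text to compare yours against. Your proof is the standard modern route --- pass to the barycentric subdivision, which is balanced; take its dual $(d+1)$-colored graph, which represents $M$; then cancel proper $1$-dipoles until every $\Gamma_{\hat{c}}$ is connected --- and this is essentially the Ferri--Gagliardi streamlining of the theorem rather than Pezzana's original argument (which proceeded via contracted triangulations obtained by simplicial collapsing). All the combinatorial steps are sound: the dimension-coloring of barycenters is a proper vertex-coloring, the dual graph of a closed connected manifold triangulation is $(d+1)$-regular, properly edge-colored and connected, an edge joining two components of $\Gamma_{\hat{c}}$ is forced to be $c$-colored, and the descent on the vertex count terminates in a contracted graph, i.e.\ a crystallization. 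You also correctly isolate the one genuinely non-routine input, the invariance of $|\mathcal{K}(\Gamma)|$ under cancellation of a proper $1$-dipole. The only caveat is that your closing justification of that lemma is heuristic rather than a proof: in the pseudocomplex $\mathcal{K}(\Gamma)$ the union $\sigma(u)\cup\sigma(v)$ need not be an embedded ball, since lower-dimensional faces of $\sigma(u)$ and $\sigma(v)$ may carry further identifications, and the actual argument runs through the disjoint (open) stars of the two \emph{distinct} $c$-labeled vertices guaranteed by properness. Since you explicitly cite the dipole lemma of Ferri and Gagliardi for this step, that is a looseness of the sketch, not a gap in the proof.
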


Pezzana's existence theorem has been extended to the boundary case (cf \cite{cg80, ga83}). Let $(\Gamma,\gamma)$ and $(\Gamma',\gamma')$   be two $(d+1)$-colored graphs with color sets $\Delta_d$ and $\Delta'_d$ respectively. Then $I(\Gamma):=(I_V,I_c):\Gamma \to \Gamma'$ is said to be an {\em isomorphism} if $I_V: V(\Gamma) \to V(\Gamma')$ and $I_c:\Delta_d \to \Delta'_d$ are bijective maps such that $u_1 u_2$ is an edge of color $j \in \Delta_d$ if and only if $I_V(u_1)I_V(u_2)$ is an edge of color $I_c(j) \in \Delta'_d$. And  $(\Gamma,\gamma)$ and $(\Gamma',\gamma')$ are said to be isomorphic.

\begin{proposition}[\cite{cg80, ga83}]
Let $M$ be a connected compact $n$-manifold with (possibly non connected) boundary. For every crystallization $(\Gamma',\gamma')$ of $\partial M$, there exists a crystallization $(\Gamma,\gamma)$ of $M$, whose boundary graph $(\partial \Gamma,\partial \gamma)$ is isomorphic with $(\Gamma',\gamma')$.
\end{proposition}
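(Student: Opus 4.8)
\noindent\emph{Plan of proof.} Write $d=n$, put $N=\partial M$, let $h\ge 1$ be the number of connected components of $N$, and let $L:=\mathcal{K}(\Gamma')$ be the properly $\Delta_{d-1}$-coloured pseudo-triangulation of $N$ dual to $(\Gamma',\gamma')$; since $(\Gamma',\gamma')$ is a crystallization, $L$ carries exactly one vertex of each colour on each component of $N$. The plan has two stages: first realize $(\Gamma',\gamma')$ as the boundary graph of \emph{some} graph in $\mathbb{G}_d$ representing $M$, and then contract that graph in its interior only, until it becomes a crystallization of $M$.

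For the first stage I would start from \emph{any} $(\Gamma_1,\gamma_1)\in\mathbb{G}_d$ representing $M$: triangulate $M$ (every compact PL manifold is triangulable), pass to the barycentric subdivision, colour each vertex by the dimension of the simplex it is the barycentre of, and dualize (reversing the construction $\Gamma\mapsto\mathcal{K}(\Gamma)$). Because in a barycentric subdivision every boundary $(d-1)$-face of a $d$-simplex is the face opposite the colour-$d$ vertex, the dual graph indeed lies in $\mathbb{G}_d$, and its boundary graph $(\partial\Gamma_1,\partial\gamma_1)$ is some $n$-coloured graph representing $N$ --- but in general not $(\Gamma',\gamma')$. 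I would then \emph{correct the boundary}: by the constructions and moves of \cite{cg80,ga83}, the two representations $(\partial\Gamma_1,\partial\gamma_1)$ and $(\Gamma',\gamma')$ of the closed manifold $N$ are joined by a finite sequence of local moves, and each of these can be performed \emph{inside a collar} $N\times[0,1]\hookrightarrow M$ --- replacing a piece of a coloured prism complex over $N$ by a coloured prism complex adapted to the move, an operation supported in a regular neighbourhood of $N$ which changes neither the PL type of $M$ nor the part of the complex lying away from $N$. Running through the whole sequence yields $(\Gamma_0,\gamma_0)\in\mathbb{G}_d$ representing $M$ with $(\partial\Gamma_0,\partial\gamma_0)\cong(\Gamma',\gamma')$.

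For the second stage, recall from the discussion preceding Definition~\ref{def:crystallization} that $(\Gamma_0,\gamma_0)$ is a crystallization of $M$ precisely when $(\Gamma_0)_{\hat d}$ is connected and each component of its boundary graph is contracted. The second condition holds throughout this stage, because the boundary graph will stay $\cong\Gamma'$, a crystallization of $N$; so it suffices to make $(\Gamma_0)_{\hat d}$ connected. To this end I would apply the reduction (``proper contraction'') moves of Pezzana and Gagliardi \cite{pe74,ga81}, in the form adapted to graphs in $\mathbb{G}_d$ in \cite{cg80,ga83}: as long as $\mathcal{K}(\Gamma_0)$ has more than $dh+1$ vertices --- equivalently, as long as $(\Gamma_0)_{\hat d}$ is disconnected, since the boundary graph is already contracted --- there is such a move, supported away from the boundary, that preserves the PL type of $M$, leaves $\partial\Gamma_0$ unchanged, and strictly decreases the number of vertices of $\mathcal{K}(\Gamma_0)$. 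Since that number is bounded below by $dh+1$, the process terminates, necessarily at a graph $(\Gamma,\gamma)\in\mathbb{G}_d$ for which $\mathcal{K}(\Gamma)$ has exactly $dh+1$ vertices --- that is, a crystallization of $M$ --- still satisfying $(\partial\Gamma,\partial\gamma)\cong(\Gamma',\gamma')$, which is the assertion.

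The step I expect to be the main obstacle is the collar realization in the first stage: exhibiting the local replacement so that the ambient object remains a genuine properly coloured pseudo-triangulation of $M$, is confined to a collar of $N$, and induces on the boundary exactly the prescribed move. This ``relative'' version of the coloured-prism construction is the technical heart of \cite{cg80,ga83}; granting it, the existence step and the final contraction step are straightforward adaptations of the closed-manifold arguments of \cite{pe74,ga81}.
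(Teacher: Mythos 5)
First, a point of reference: the paper does not prove this statement at all --- it is Proposition 2.6, quoted verbatim from \cite{cg80, ga83} --- so there is no in-paper argument to compare yours against. Your outline (extend a coloured triangulation of $M$ via barycentric subdivision, correct the induced boundary gem inside a collar of $\partial M$ until it agrees with $(\Gamma',\gamma')$, then contract in the interior) is indeed the shape of the argument in the cited references. But as written it is a plan rather than a proof: the collar realization of the move sequence joining $(\partial\Gamma_1,\partial\gamma_1)$ to $(\Gamma',\gamma')$ is the entire content of the proposition, and you explicitly defer it to \cite{cg80,ga83} --- the very sources the statement is being quoted from. To make this a proof you would need (a) the theorem that any two gems of the same closed PL $(n-1)$-manifold are joined by a finite sequence of dipole moves, and (b) an explicit coloured triangulation of $N\times[0,1]$ whose two ends realize the gems before and after a single dipole move; gluing these ``traces'' along the collar is what actually produces $\Gamma_0$. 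Neither ingredient is constructed or even precisely stated in your write-up.

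There is also a concrete error in your second stage. The termination criterion ``$\mathcal{K}(\Gamma_0)$ has more than $nh+1$ vertices iff $(\Gamma_0)_{\hat n}$ is disconnected, since the boundary graph is already contracted'' is not correct: the vertices of $\mathcal{K}(\Gamma_0)$ labelled by a colour $c<n$ correspond to \emph{all} components of $(\Gamma_0)_{\hat c}$, and besides the $h$ non-regular components forced by the contracted boundary there may be surplus $n$-regular components, i.e.\ interior $c$-labelled vertices, which contractedness of $\partial\Gamma_0$ does not exclude. So you must cancel $1$-dipoles of \emph{every} colour, not only colour $n$; and for a colour $c<n$ you must further argue that such a dipole can be chosen with both endpoints internal (a $c$-edge leaving a regular component of $(\Gamma_0)_{\hat c}$ exists because $\Gamma_0$ is connected, but its second endpoint could a priori be a boundary vertex, and cancelling that dipole would alter $\partial\Gamma_0$ --- exactly what your argument must avoid). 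Without this the contraction process is not guaranteed to terminate at a graph with $nh+1$ labelled vertices while keeping the boundary graph isomorphic to $(\Gamma',\gamma')$.
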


It is known that for a given PL $d$-manifold $M$ with boundary, there exists a $(d+1)$-colored graph $(\Gamma,\gamma)$ which is  regular with respect to a fixed color $k\in \Delta_{d}$ and represents $M$. Without loss of generality, we assume $k=d$, i.e., $(\Gamma,\gamma) \in \mathbb{G}_d$.  A new invariant `gem-complexity' has been defined. 
\begin{definition}\label{def:gem-complexity}
The  gem-complexity of a connected compact PL $d$-manifold $M$ (possibly with boundary) is the non-negative integer $\mathit{k}(M) = p - 1$, where $2p$ is the least number of vertices of a crystallization of $M$. 
\end{definition}

From the construction it is obvious that for $\mathcal{B} \subset \Delta_d$ of cardinality $k+1$, ${\mathcal K}(\Gamma)$ has as many $k$-simplices with vertices labeled by $\mathcal{B}$ as many connected components of $\Gamma_{\Delta_d \setminus \mathcal{B}}$ are (cf. \cite{fgg86}). Moreover, if  $\mathcal{B} \subseteq \Delta_d$ then each regular connected component of $\Gamma_{\Delta_d \setminus \mathcal{B}}$ represents an interior $k$-simplex with vertices labeled by $\mathcal{B}$ in  ${\mathcal K}(\Gamma)$ whereas non-regular represents a boundary $h$-simplex with vertices labeled by $\mathcal{B}$ in  ${\mathcal K}(\Gamma)$. 

Given two $(d+1)$-colored graphs $(\Gamma_1,\gamma_1)$ and $(\Gamma_2,\gamma_2)$ with the same color set $\Delta_d$ and two vertices $v_1 \in V(\Gamma_1), v_2 \in V(\Gamma_2)$, the graph connected sum $(\Gamma_1\#_{v_1v_2}\Gamma_2,\gamma_1\#_{v_1v_2}\gamma_2)$ is defined by deleting $v_1$ and $v_2$ from $(\Gamma_1,\gamma_1)$  and $(\Gamma_2,\gamma_2)$ respectively, and pasting together the pairs of `free' edges (the ones which had an end-point in the deleted vertices) with the same color. From the construction it is clear that $\mathcal K (\Gamma_1\#_{v_1v_2}\Gamma_2)$ is obtained from $\mathcal K (\Gamma_1)$ and $\mathcal K (\Gamma_2)$ by removing the $d$-simplices $\sigma(v_1)$ and $\sigma(v_2)$ and pasting together all $(\Delta_d \setminus \{i\})$-colored free $(d-1)$-simplices (the ones which were attached with $\sigma(v_1)$ and $\sigma(v_2)$) of $\mathcal K (\Gamma_1)-\sigma(v_1)$ and $\mathcal K (\Gamma_2)-\sigma(v_2)$, for $0\leq i \leq d$. For $0\leq i \leq d$, if the $i$-colored vertex of $\sigma(v_1)$ or $\sigma(v_2)$ is not a boundary vertex of $\mathcal K (\Gamma_1)$ or $\mathcal K (\Gamma_2)$ respectively then $(\Gamma_1\#_{v_1v_2}\Gamma_2,\gamma_1\#_{v_1v_2}\gamma_2)$ represents $|\mathcal K (\Gamma_1)| \# |\mathcal K (\Gamma_2)|$.  Here we list few well known results which will be needful for this article. In \cite{ga79}, Gagliardi gave the following characterization of a $4$-colored graph to be a crystallization of a closed connected 3-manifold.

\begin{proposition}\label{prop:3mfd}
Let $(\Gamma,\gamma)$ be a contracted $4$-colored graph with $n$ number of vertices and color set $\Delta_3=\{0,1,2,3\}$.  Then $(\Gamma,\gamma)$ is a crystallization of a closed connected $3$-manifold if and only if

\begin{enumerate}[$(i)$]
\item $g_{ij}=g_{kl}$ for $i \neq j \neq k \neq l\in \{0,1,2,3\}$, and
\item $g_{01}+g_{02}+g_{03}=2+n/2$.
\end{enumerate}
\end{proposition}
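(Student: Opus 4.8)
The plan is to rely on the classical criterion that a contracted $(d{+}1)$-colored graph represents a closed PL $d$-manifold precisely when all of its vertex links are spheres, and then to convert the sphericity of the four vertex links of $\mathcal{K}(\Gamma)$ into the two numerical conditions $(i)$ and $(ii)$.

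First I would record that, $(\Gamma,\gamma)$ being a contracted $4$-colored graph, $\mathcal{K}(\Gamma)$ is a connected closed $3$-pseudomanifold with exactly four vertices $v_0,v_1,v_2,v_3$, where $v_c$ is the one labelled by the color $c$ (contractedness of $\Gamma$ forces $g(\Gamma_{\hat c})=1$ for every $c$). For each $c\in\Delta_3$ the link $\lk{\mathcal{K}(\Gamma)}{v_c}$ is naturally identified with $\mathcal{K}(\Gamma_{\hat c})$, where $\Gamma_{\hat c}=\Gamma_{\Delta_3\setminus\{c\}}$ is a connected $3$-regular $3$-colored graph; moreover the link of any edge of $\mathcal{K}(\Gamma)$ is a circle, being the carrier of a bicolored cycle of $\Gamma$. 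Since a connected $3$-regular $3$-colored graph always represents a connected closed surface (the star of each $0$-simplex is a fan of triangles around a bicolored cycle, hence a disc, so no singular points arise), and since a connected closed $3$-pseudomanifold all of whose vertex links are $2$-spheres is a PL $3$-manifold, we conclude that $(\Gamma,\gamma)$ is a crystallization of a closed connected $3$-manifold if and only if each surface $\mathcal{K}(\Gamma_{\hat c})$ is a $2$-sphere, equivalently $\chi(\mathcal{K}(\Gamma_{\hat c}))=2$ for $c=0,1,2,3$.

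Next comes the Euler-characteristic bookkeeping. Writing $\Delta_3\setminus\{c\}=\{i,j,k\}$, the complex $\mathcal{K}(\Gamma_{\hat c})$ has $n$ triangles (one per vertex of $\Gamma$), $3n/2$ edges ($n/2$ for each color of $\Gamma_{\hat c}$), and $g_{ij}+g_{ik}+g_{jk}$ vertices (the bicolored cycles of $\Gamma$ supported on two of the colors $i,j,k$ being exactly the vertices of the link carrying the third color). Hence $\chi(\mathcal{K}(\Gamma_{\hat c}))=g_{ij}+g_{ik}+g_{jk}-n/2$, so the manifold condition is the system
$$g_{12}+g_{13}+g_{23}=g_{02}+g_{03}+g_{23}=g_{01}+g_{03}+g_{13}=g_{01}+g_{02}+g_{12}=2+n/2. \qquad (\ast)$$
It then remains to check that $(\ast)$ is equivalent to $(i)\wedge(ii)$. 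Assuming $(i)$ and $(ii)$: substituting $g_{23}=g_{01}$, $g_{13}=g_{02}$, $g_{12}=g_{03}$ turns each of the four sums in $(\ast)$ into $g_{01}+g_{02}+g_{03}$, which is $2+n/2$ by $(ii)$. Conversely, subtracting equalities of $(\ast)$ in pairs gives $g_{12}+g_{13}=g_{02}+g_{03}$, $g_{12}+g_{23}=g_{01}+g_{03}$, $g_{13}+g_{23}=g_{01}+g_{02}$; adding these and halving yields $g_{12}+g_{13}+g_{23}=g_{01}+g_{02}+g_{03}$, and subtracting each of the three relations from this identity gives $g_{12}=g_{03}$, $g_{13}=g_{02}$, $g_{23}=g_{01}$, which is $(i)$; the last equality in $(\ast)$ then reads $g_{01}+g_{02}+g_{03}=2+n/2$, which is $(ii)$.

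The numerical part is routine; the real content — and the step I would be most careful about — is the topological reduction of the second paragraph: the identification $\lk{\mathcal{K}(\Gamma)}{v_c}\cong\mathcal{K}(\Gamma_{\hat c})$, the verification that these links are genuine surfaces rather than merely $2$-pseudomanifolds, and the appeal to the criterion that a closed $3$-pseudomanifold with all vertex links $2$-spheres is a PL $3$-manifold. These are precisely the classical ingredients underlying Gagliardi's characterization, and once they are granted, $(\ast)$ together with the short computation above finishes the proof.
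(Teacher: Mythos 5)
The paper does not prove this proposition at all: it is quoted as a known result with a citation to Gagliardi's 1979 paper, so there is no in-paper argument to compare yours against. Your proof is correct and is essentially the classical argument behind Gagliardi's characterization: reduce to the sphericity of the four vertex links $\lk{\mathcal{K}(\Gamma)}{v_c}\cong\mathcal{K}(\Gamma_{\hat c})$, observe that each of these is automatically a connected closed surface, convert ``each link is $\mathbb{S}^2$'' into $\chi(\mathcal{K}(\Gamma_{\hat c}))=2$ via the face counts $n$ triangles, $3n/2$ edges and $g_{ij}+g_{ik}+g_{jk}$ vertices, and then check that the resulting system of four equations is equivalent to $(i)\wedge(ii)$ by the linear-algebra manipulation you give. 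All of these steps check out, including the converse direction of the numerical equivalence. The only point worth making explicit is that your edge count ($n/2$ edges of each color in $\Gamma_{\hat c}$) tacitly uses that $\Gamma$ is $4$-regular; this is the intended reading of the statement (a crystallization of a closed manifold has no boundary vertices, and Gagliardi's theorem is stated for regular graphs), but since the paper's notion of ``$4$-colored graph'' formally allows boundary vertices, it would be cleaner to say so when you set up the count.
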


Moreover, we have the following similar necessary criteria for any closed connected PL $d$-manifold (cf. \cite{fgg86}).

\begin{proposition} \label{prop:preliminaries}
Let $(\Gamma,\gamma)$ be a $(d+1)$-colored graph which represents a closed connected PL $d$-manifold $M$, with $d \geq 3$. Then,   $2g_{ijk}=g_{ij}+g_{ik}+g_{jk}-\frac{\#V(\Gamma)}{2}$ for any distinct $i,j,k \in \Delta_d$.
\end{proposition}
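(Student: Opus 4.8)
The plan is to reduce the identity to the fact that, for a colored graph representing a closed PL manifold, every $3$-colored residue represents a $2$-sphere, and then to extract the formula from the elementary Euler-characteristic count attached to a $3$-colored graph.

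First I would look at the subgraph $\Gamma_{\{i,j,k\}}=(V(\Gamma),\gamma^{-1}(\{i,j,k\}))$: it is a $3$-colored graph which is $3$-regular, so each of its $g_{ijk}$ connected components has an even number of vertices and represents a closed connected PL surface. Next I would identify these surfaces. By the correspondence recalled just before Proposition~\ref{prop:3mfd} — each connected component of $\Gamma_{\Delta_d\setminus\mathcal B}$ produces a simplex of $\Kd{\Gamma}$ labeled by $\mathcal B$, and since $M$ is closed all such simplices are interior — every component of $\Gamma_{\{i,j,k\}}$ corresponds to a $(d-3)$-simplex $\tau$ of $\Kd{\Gamma}$ labeled by $\Delta_d\setminus\{i,j,k\}$ (this is a genuine nonempty simplex precisely because $d\geq 3$) and in fact represents the link $\lk{\Kd{\Gamma}}{\tau}$. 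As $|\Kd{\Gamma}|$ is PL homeomorphic to the closed PL $d$-manifold $M$, the link of any $(d-3)$-simplex is a PL $2$-sphere; hence each component of $\Gamma_{\{i,j,k\}}$ represents $\mathbb{S}^2$, of Euler characteristic $2$. For $d=3$ this step is immediate, since then $\Gamma_{\{i,j,k\}}=\Gamma_{\hat{c}}$ and its components are the vertex links of a closed $3$-manifold.

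Then I would record the Euler-characteristic formula for a $3$-colored graph: if $(\Lambda,\lambda)$ is a connected $3$-colored graph with color set $\{i,j,k\}$ and $2q$ vertices representing a closed surface $F$, then $\Kd{\Lambda}$ has $2q$ triangles, $3q$ edges (each of the three $2$-element color subsets contributes $q$ of them, since the complementary monochromatic subgraph is a perfect matching on $2q$ vertices) and $g_{ij}(\Lambda)+g_{ik}(\Lambda)+g_{jk}(\Lambda)$ vertices, whence $\chi(F)=g_{ij}(\Lambda)+g_{ik}(\Lambda)+g_{jk}(\Lambda)-q$. Applying this to each of the $g_{ijk}$ components of $\Gamma_{\{i,j,k\}}$ with $\chi=2$, and summing the resulting equalities — noting that $V(\Gamma_{\{i,j,k\}})=V(\Gamma)$ so the component vertex-counts total $\#V(\Gamma)$, and that each $\{i,j\}$-, $\{i,k\}$-, $\{j,k\}$-colored component of $\Gamma$ lies in a unique $\{i,j,k\}$-component so the component values of $g_{ij},g_{ik},g_{jk}$ total $g_{ij},g_{ik},g_{jk}$ — gives $2g_{ijk}=g_{ij}+g_{ik}+g_{jk}-\tfrac{\#V(\Gamma)}{2}$.

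The only genuinely non-formal point, and the one I expect to be the main obstacle, is the middle paragraph: one must justify that a connected $3$-colored residue of a colored graph representing a closed PL $d$-manifold represents the link of the associated codimension-$3$ simplex and that this link is a $2$-sphere; this can be cited from \cite{fgg86}. Everything else is routine bookkeeping with the $\Kd{\cdot}$ construction and the surface Euler-characteristic count.
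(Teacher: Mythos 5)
Your argument is correct. The paper itself offers no proof of this proposition: it is stated as a known necessary condition and attributed to \cite{fgg86}, so there is no internal argument to compare against. What you have written is essentially the standard derivation that underlies the cited result, and every step checks out: $\Gamma_{\{i,j,k\}}$ is $3$-regular because $M$ is closed; its components correspond bijectively to the $(d-3)$-simplices of $\mathcal K(\Gamma)$ labeled by $\Delta_d\setminus\{i,j,k\}$ and represent their links, which are PL $2$-spheres since $M$ is a closed PL $d$-manifold; and the face count for a connected $3$-colored graph with $2q$ vertices ($2q$ triangles, $3q$ edges from the three monochromatic perfect matchings, $g_{ij}+g_{ik}+g_{jk}$ vertices) gives $\chi=g_{ij}+g_{ik}+g_{jk}-q$, so summing $\chi=2$ over the $g_{ijk}$ components and using that the components partition $V(\Gamma)$ and that every $\{a,b\}$-colored cycle lies in a unique $\{i,j,k\}$-component yields exactly the stated identity. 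You correctly isolate the one non-formal ingredient (that a $3$-colored residue represents the link of the associated codimension-$3$ simplex, which is a $2$-sphere); that is precisely the content one must either cite from \cite{fgg86} or prove separately, and citing it is appropriate here. In short, your proposal supplies a complete proof where the paper supplies only a reference.
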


From \cite{cp90,chi86}, we know the following.

\begin{proposition}\label{prop:rank}
Let $(\Gamma,\gamma)\in \mathbb{G}_4$ be a crystallization of a connected compact PL $4$-manifold $M$ (possibly with boundary). Then, for $a,b \in \Delta_4$, $$m \leq g(\Gamma_{\Delta_4\setminus\{a,b\}})-\big(g(\Gamma_{\Delta_4\setminus\{a\}})+g(\Gamma_{\Delta_4\setminus\{b\}})-1\big),$$ where $m$ is the rank of the fundamental group of $M$.
\end{proposition}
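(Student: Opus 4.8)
The plan is to bound the \emph{rank} of $\pi_1(M)=\pi_1(\mathcal{K}(\Gamma))$ from above by exhibiting an explicit presentation with the asserted number of generators, read off the colored graph $(\Gamma,\gamma)$ via Gagliardi's correspondence. Fix the two colors $a,b\in\Delta_4$ and let $L\subseteq\mathcal{K}(\Gamma)$ be the subcomplex generated by all simplices whose vertex labels lie in $\{a,b\}$; since only two labels occur, $L$ is a graph. By the dictionary recalled above --- a $k$-simplex of $\mathcal{K}(\Gamma)$ with label set $\mathcal{B}$ corresponds to a connected component of $\Gamma_{\Delta_4\setminus\mathcal{B}}$ --- the graph $L$ has exactly $g(\Gamma_{\Delta_4\setminus\{a\}})$ vertices labeled $a$, exactly $g(\Gamma_{\Delta_4\setminus\{b\}})$ vertices labeled $b$, and exactly $g(\Gamma_{\Delta_4\setminus\{a,b\}})$ edges, each labeled $\{a,b\}$.

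Next I would show that $L$ is connected. Each $4$-simplex $\sigma(u)$ of $\mathcal{K}(\Gamma)$ meets $L$ in precisely the edge joining its $a$-labeled and its $b$-labeled vertices; hence if $L=L_1\sqcup L_2$ then $\mathcal{K}(\Gamma)=A_1\cup A_2$, where $A_i$ is the union of those $\sigma(u)$ whose $\{a,b\}$-edge lies in $L_i$. Two $4$-simplices glued along a $3$-face (opposite a vertex of color $i$) have coincident $\{a,b\}$-edges if $i\notin\{a,b\}$, and edges sharing a vertex if $i\in\{a,b\}$; in either case their $\{a,b\}$-edges lie in the same $L_i$. If a face $\tau$ common to some $\sigma(u)\subseteq A_1$ and $\sigma(v)\subseteq A_2$ had $a$ or $b$ among its labels, a shared vertex would lie in $L_1\cap L_2=\emptyset$, a contradiction; otherwise $\tau$ has at most three labels, so it has codimension $\ge 2$ in the $4$-manifold, its link is connected, and the $4$-simplices around $\tau$ form a connected fan in which consecutive members share $3$-faces --- forcing $\sigma(u)$ and $\sigma(v)$ into the same $A_i$, again a contradiction. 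Thus $A_1\cap A_2=\emptyset$, contradicting connectedness of $\mathcal{K}(\Gamma)$ unless $L$ is connected. Consequently $\pi_1(L)$ is free of rank
$$b_1(L)=g(\Gamma_{\Delta_4\setminus\{a,b\}})-\bigl(g(\Gamma_{\Delta_4\setminus\{a\}})+g(\Gamma_{\Delta_4\setminus\{b\}})-1\bigr).$$

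The remaining, and principal, obstacle is to prove that the inclusion $\iota\colon L\hookrightarrow\mathcal{K}(\Gamma)$ is surjective on $\pi_1$. Granting this, $\pi_1(M)$ is a quotient of the free group $\pi_1(L)$, hence generated by $b_1(L)$ elements, so $m=\mathrm{rank}\,\pi_1(M)\le b_1(L)$, which is exactly the claimed inequality. Surjectivity is where the hypotheses that $\mathcal{K}(\Gamma)$ is a $4$-manifold and that $d\ge 3$ genuinely enter: one homotopes an arbitrary loop into the $1$-skeleton and then pushes it into $L$ one $4$-simplex at a time, each $4$-simplex deformation retracting onto its $\{a,b\}$-edge, the obstruction to combining these local pushes being killed by the simple connectivity of the links and boundaries of the lower-dimensional faces (spheres or balls of dimension $\ge 2$); this is precisely the mechanism underlying the standard construction of a presentation of $\pi_1$ of a crystallization, which one may instead cite directly. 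The argument does not distinguish whether $\partial M=\emptyset$: in the closed case $g(\Gamma_{\Delta_4\setminus\{a\}})=g(\Gamma_{\Delta_4\setminus\{b\}})=1$ and the bound becomes $m\le g(\Gamma_{\Delta_4\setminus\{a,b\}})-1$, while with boundary these vertex counts record the relevant numbers of boundary components, the displayed formula remaining uniform.
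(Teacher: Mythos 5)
The paper does not actually prove this proposition: it is imported verbatim from Chiavacci--Pareschi and Chiavacci (\cite{cp90,chi86}), so there is no internal argument to compare yours against. Your reconstruction is essentially the standard one from those sources. The right-hand side is exactly the first Betti number of the graph $L$ spanned by the $a$- and $b$-labelled vertices of $\mathcal K(\Gamma)$, and you read its vertex and edge counts correctly off the component counts of $\Gamma_{\Delta_4\setminus\{a\}}$, $\Gamma_{\Delta_4\setminus\{b\}}$ and $\Gamma_{\Delta_4\setminus\{a,b\}}$. Your connectivity argument for $L$ is correct but longer than necessary: two $4$-simplices identified along a $3$-face have $\{a,b\}$-edges that either coincide or share an endpoint, and the adjacency graph of $4$-simplices under $3$-face identifications is $\Gamma$ itself, which is connected; the detour through lower-dimensional faces $\tau$ and their links adds nothing.

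The one step that is not yet a proof is the surjectivity of $\pi_1(L)\to\pi_1(M)$, which you describe only as a local pushing procedure governed by simply connected links; as stated, that mechanism is the one that controls \emph{relations}, not generators, and it would not convince a referee. The clean way to close it is via the join structure of the pseudocomplex: every $4$-simplex is the join of its $\{a,b\}$-labelled edge with its $(\Delta_4\setminus\{a,b\})$-labelled triangle, and these joins are compatible with the face identifications, so $|\mathcal K(\Gamma)|$ minus the $2$-dimensional subpolyhedron $Y$ spanned by the $(\Delta_4\setminus\{a,b\})$-labelled vertices deformation retracts onto $L$. Since $\dim Y=2$ and $\dim M=4$, general position (after pushing a loop into the interior) makes any loop miss $Y$, so $\pi_1(L)\cong\pi_1(|\mathcal K(\Gamma)|\setminus Y)\to\pi_1(M)$ is onto, and $m\leq b_1(L)$ follows, uniformly in the boundary case. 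With that substitution (or with the direct citation you offer as an alternative), the argument is complete.
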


\subsection{Regular Genus of PL $d$-manifolds (possibly with boundary)}\label{sec:genus}

Let $(\Gamma,\gamma) \in \mathbb{G}_d$ be a $(d+1)$-colored graph which represents a connected compact $d$-manifold $M$ (possibly with boundary $\partial M$).  Let $2\overline{p}$ and $2\dot{p}$ denote the number of boundary vertices and internal vertices respectively. For each $u \in V(\partial \Gamma)$ (possibly empty), take a new vertex $\tilde u$. Join $u$ and $\tilde u$ by a new $d$-colored edge and thus a new graph $(\tilde \Gamma, \tilde\gamma)$ is obtained. If $\partial\Gamma = \emptyset$ then $\tilde\Gamma$ and $\Gamma$ coincide.

Now, given any cyclic permutation $\varepsilon = (\varepsilon _0,\varepsilon _1,\dots,\varepsilon_d = d)$ of the color set $\Delta_d$, a regular imbedding of $\tilde \Gamma$ into a surface $F$ is simply an imbedding $i:|\tilde \Gamma|\to F$ such that the vertices of $\tilde \Gamma$ intersecting $\partial F$ are same as that of new added vertices and the regions of the imbedding are  bounded either by a cycle (internal region)  or by a walk (boundary region) of $\tilde \Gamma$ with $\varepsilon_i,\varepsilon_{i+1}(i$ mod $d+1)$ colored edges alternatively. 

Using Gross `voltage theory' (see \cite{gro74}), in the bipartite case, and Stahl `embedding schemes' (see \cite{st78}), in the non bipartite case, one can prove that given any cyclic permutation $\varepsilon$ of $\Delta_d$ there exists a regular embedding $i_\varepsilon : \tilde \Gamma \hookrightarrow F_\varepsilon$ , where the surface $F_\varepsilon$ has euler characteristic 
\begin{eqnarray}\label{relation_chi}
\chi_{\varepsilon}(\Gamma)= \sum_{i \in \mathbb{Z}_{d+1}} \dot{g}_{\varepsilon_i \varepsilon_{i+1}} + (1-d) \dot{p} + (2-d) \overline{p}
\end{eqnarray}
and $\lambda_\varepsilon=\partial g_{\varepsilon_0\varepsilon_{d-1}}$ holes where $\dot{g}_{ij}$ and $\partial g_{ij}$ denote the number of $\{i,j\}$-colored cycles in $\Gamma$ and $\partial\Gamma$ respectively. For more details we refer \cite{ga87}.

Moreover, if $F_\varepsilon$ is orientable (resp., non-orientable) depending upon $\Gamma$ is bipartite(resp., non bipartite) then the integer
$$\rho_{\varepsilon}(\Gamma) = 1 - \chi_{\varepsilon}(\Gamma)/2-\lambda_\varepsilon/2$$
is same as the genus (resp,. half of the genus) of the surface $F_{\varepsilon}$.

\begin{definition}[\cite{ga87,ga81}]\label{def:regular-genus}
Let $(\Gamma,\gamma) \in \mathbb{G}_d$ be a $(d+1)$-colored graph. Then the non-negative integer 
$$\rho(\Gamma)= \min \{\rho_{\varepsilon}(\Gamma) \ | \  \varepsilon =(\varepsilon_0,\varepsilon_1,\dots,\varepsilon_d = d)\ \text{ is a cyclic permutation of } \ \Delta_d\}$$ is called the  regular genus of $(\Gamma,\gamma)$. For a connected compact PL $d$-manifold $M$ (possibly with boundary), the non-negative integer 
$$\mathcal G(M) = \min \{\rho(\Gamma) \ | \  (\Gamma,\gamma) \mbox{ is a crystallization of } M\}$$
is called the regular genus of the manifold $M$.
\end{definition}

Let $(\Gamma,\gamma)$ be a crystallization of a closed connected $3$-manifold $M$. Then, it is easy to calculate from Proposition \ref{prop:3mfd} that $\rho(\Gamma)=\min\{g_{01}-1,g_{02}-1,g_{12}-1\}$. Let $M$ be a connected compact PL $4$-manifold with $h$ boundary components say $\partial^1M,\dots, \partial^hM$. Then $\mathcal G(\partial M)$ is defined as $\sum_{i=1}^h \mathcal G(\partial^i M)$. Therefore, for any crystallization $(\Gamma,\gamma)$ of $M$, we have 

$$\mathcal G(\partial M)= \sum_{k=1}^h \mathcal G(\partial^k M)\leq \sum_{k=1}^h (\partial^k g_{ij}-1)=\partial g_{ij}-h 
\mbox{ for distinct i,j } \in \Delta_3.$$

\section{Lower bounds for  gem-complexity of PL 4-manifolds with boundary}
Let $(\Gamma,\gamma)$ be a crystallization of a connected compact PL $4$-manifold $M$ with $h$ boundary components. Throughout this article, we assume that $(\Gamma,\gamma)\in \mathbb{G}_4$  and  $h\geq 1$.
For a color set $\{i_1,i_2,\dots,i_k\} \subset \Delta_4$, $g_{i_1i_2 \cdots i_k}$ (or sometimes $g(\Gamma_{\{i_1,i_2, \dots, i_k\}})$) denotes the number of connected components and $\dot g_{i_1i_2 \dots i_k}$ denotes the number of $k$-regular connected components of the graph $\Gamma_{\{i_1, i_2, \dots, i_k\}}$.

\begin{lemma}\label{lemma:interior}
Let $(\Gamma,\gamma)\in \mathbb{G}_4$ be a crystallization of a connected compact PL $4$-manifold $M$ with $h$ boundary components. Then for any $a,b\in \{0,1,2,3\}$, $\dot g_{ab4}\geq h-1$.
\end{lemma}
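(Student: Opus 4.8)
The plan is to reinterpret $\dot g_{ab4}$ geometrically and then extract the bound from the connectedness of $M$ by an elementary graph-theoretic argument. Throughout put $\{p,q\}=\{0,1,2,3\}\setminus\{a,b\}$, so that $\Gamma_{\{a,b,4\}}=\Gamma_{\Delta_4\setminus\{p,q\}}$. By the properties of $\mathcal K(\Gamma)$ recalled above, the connected components of $\Gamma_{\{a,b,4\}}$ are in bijection with the $\{p,q\}$-coloured edges of $\mathcal K(\Gamma)$, a component being $3$-regular (hence counted by $\dot g_{ab4}$) exactly when the corresponding edge is an interior edge of $M$; a non-$3$-regular component contains a boundary vertex of $\Gamma$ (such a vertex has degree $2$ in $\Gamma_{\{a,b,4\}}$) and corresponds to a boundary edge. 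Likewise the $h$ components of $\Gamma_{\hat p}$ (resp.\ $\Gamma_{\hat q}$) correspond to the $h$ vertices of $\mathcal K(\Gamma)$ labelled $p$ (resp.\ $q$), and the crystallization hypothesis places exactly one such vertex in each boundary component $\partial^kM$. I would encode this in a bipartite multigraph $\mathcal E$ whose vertex set is $\pi_0(\Gamma_{\hat p})\sqcup\pi_0(\Gamma_{\hat q})$ (thus $2h$ vertices), with one edge for each component $C$ of $\Gamma_{\{a,b,4\}}$, joining the component of $\Gamma_{\hat p}$ containing $C$ to the component of $\Gamma_{\hat q}$ containing $C$; concretely $\mathcal E$ is the $1$-skeleton of $\mathcal K(\Gamma)$ restricted to its $p$- and $q$-labelled vertices. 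So $\mathcal E$ has $g_{ab4}$ edges, of which $\dot g_{ab4}$ are ``interior''. The crucial observation is that an edge of $\mathcal E$ arising from a \emph{boundary} component of $\Gamma_{\{a,b,4\}}$ joins the unique $p$-vertex and the unique $q$-vertex lying in one and the same boundary component $\partial^kM$, since a boundary $1$-simplex of $\mathcal K(\Gamma)$ lies in a single $\partial^kM$.

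The key step is to show $\mathcal E$ is connected, for which I would use that $\Gamma$ is connected and that $\Gamma=\Gamma_{\hat p}\cup\Gamma_{\hat q}$ (every edge avoids colour $p$ or colour $q$) with $\Gamma_{\hat p}\cap\Gamma_{\hat q}=\Gamma_{\{a,b,4\}}$ --- a Mayer--Vietoris situation for $\pi_0$. For $v\in V(\Gamma)$ let $\alpha(v),\beta(v),\delta(v)$ be the components of $\Gamma_{\hat p},\Gamma_{\hat q},\Gamma_{\{a,b,4\}}$ containing $v$; then $\delta(v)$ is an edge of $\mathcal E$ joining $\alpha(v)$ and $\beta(v)$. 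Given a path $v_0,\dots,v_n$ in $\Gamma$, each edge $v_iv_{i+1}$ lies in $\Gamma_{\hat p}$ or in $\Gamma_{\hat q}$, so $\alpha(v_i)=\alpha(v_{i+1})$ or $\beta(v_i)=\beta(v_{i+1})$; in either case $\alpha(v_i),\beta(v_i),\alpha(v_{i+1}),\beta(v_{i+1})$ lie in one component of $\mathcal E$. As $\Gamma$ is connected and every vertex of $\mathcal E$ equals some $\alpha(v)$ or $\beta(v)$, $\mathcal E$ is connected. (This already yields $g_{ab4}\ge 2h-1$.)

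Finally I would collapse $\mathcal E$: for each $k=1,\dots,h$ identify the vertex of $\pi_0(\Gamma_{\hat p})$ corresponding to $\partial^kM$ with the vertex of $\pi_0(\Gamma_{\hat q})$ corresponding to $\partial^kM$. The quotient $\overline{\mathcal E}$ has $h$ vertices and is connected, hence has at least $h-1$ non-loop edges. By the crucial observation of the first paragraph, every edge of $\mathcal E$ coming from a non-$3$-regular component of $\Gamma_{\{a,b,4\}}$ becomes a loop in $\overline{\mathcal E}$; therefore each of the $\ge h-1$ non-loop edges of $\overline{\mathcal E}$ comes from a $3$-regular component, so $\dot g_{ab4}\ge h-1$.

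I expect the only genuine subtlety to be the bookkeeping in the first paragraph: correctly matching $\pi_0(\Gamma_{\hat p})$, $\pi_0(\Gamma_{\hat q})$, $\pi_0(\Gamma_{\{a,b,4\}})$ with the $p$-vertices, $q$-vertices and $\{p,q\}$-edges of $\mathcal K(\Gamma)$, and pinning down that boundary $\{p,q\}$-edges are confined to a single boundary component. The connectivity of $\mathcal E$ and the collapsing argument are then routine.
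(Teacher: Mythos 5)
Your argument is correct, and it reaches the bound by a genuinely different route from the paper's. The paper fixes one of the two colours $\{p,q\}=\{0,1,2,3\}\setminus\{a,b\}$, say $c$, uses connectedness of $\Gamma$ to join the $h$ components of $\Gamma_{\hat{c}}$ by $h-1$ chosen $c$-coloured edges, and for each such edge tracks the identification of the $l$-labelled vertices ($l$ the other colour) of the two $4$-simplices involved: since each boundary component carries exactly one $l$-labelled vertex, at least one of the two copies must be interior, and a final count ($n_1+\cdots+n_h=h-1$) together with a case analysis is needed to see that the interior vertices so produced are pairwise distinct. Your auxiliary bipartite graph $\mathcal E$ on $\pi_0(\Gamma_{\hat{p}})\sqcup\pi_0(\Gamma_{\hat{q}})$ packages the same geometric input (components of $\Gamma_{\{a,b,4\}}$ are the $\{p,q\}$-labelled edges of $\mathcal K(\Gamma)$, the non-regular ones are boundary edges confined to a single $\partial^k M$, and each $\partial^k M$ carries exactly one $p$- and one $q$-labelled vertex) into a purely graph-theoretic statement: $\mathcal E$ is connected by the $\pi_0$ Mayer--Vietoris argument, boundary edges become loops after identifying the two vertices attached to each $\partial^k M$, and a connected multigraph on $h$ vertices has at least $h-1$ non-loop edges. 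This treats $p$ and $q$ symmetrically, eliminates the distinctness bookkeeping that occupies most of the paper's proof, and yields the additional estimate $g_{ab4}\geq 2h-1$ as a by-product; the paper's version, in exchange, localizes the interior simplices inside specific components $\mathcal K(\Gamma^j)$ of $\mathcal K(\Gamma_{\hat{c}})$. Both proofs rest on the same two facts from crystallization theory (components of $\Gamma_{\Delta_4\setminus\mathcal B}$ versus simplices labelled by $\mathcal B$, and regular versus interior), so the difference is one of organization rather than of strength.
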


\begin{proof}
For $c\in \{0,1,2,3\}$, $\Gamma_{\Delta_4\setminus \{c\}}$ has $h$ components and each of the components of  $\Gamma_{\Delta_4\setminus \{c\}}$ represents a 3-ball $\mathbb{D}^3$. Let 
 $\Gamma^1,\dots,\Gamma^h$ be the components of $\Gamma_{\Delta_4\setminus \{c\}}$  such that $\Gamma^j$ is connected with $\Gamma^i$ by an edge of color $c$, for $2\leq j \leq h$, and for some $i<j$. Choose $h-1$ number of such pairs $(\Gamma^i,\Gamma^{j})$, for $2\leq j \leq h$ and for  a fixed $i<j$. Let $\partial^q \Gamma$ be the component of $\partial \Gamma$ such that $V(\partial^q \Gamma)=V(\partial \Gamma^q)$, for $1\leq q \leq h$. Note that $\mathcal K(\Gamma)$ has exactly $h$ number of vertices labeled by the color $c$, and each boundary component contains exactly one vertex labeled by the color $c$,  for $c\in\{0,1,2,3\}$. We wish to show that $\mathcal K(\Gamma)$ has at least $h-1$ number of edges labeled by the colors $c, l$ for $l \neq c \in \{0,1,2,3\}$. In other words, we prove the existence of $h-1$ number of  interior vertices labeled by the color $l$ in $\mathcal K (\Gamma_{\Delta_4\setminus \{c\}})$ for $l \neq c \in \{0,1,2,3\}$.
    
Choose such a pair $(\Gamma^i,\Gamma^{j})$.  Let $v_iv_j$ be an edge of color $c$ in $\Gamma$ such that $v_i\in V(\Gamma^i)$ and $v_j\in V(\Gamma^j)$. Therefore, the 2-simplexes of  $\sigma(v_i)$ and $\sigma(v_j)$ labeled by the color set $\Delta_4\setminus\{c,4\}$ are identified in $\mathcal{K}(\Gamma)$.  For  $l\in \Delta_4\setminus\{c,4\}$, let $x_l$ be the $l$-colored vertex of $\sigma(v_i)$, which is same as the $l$-colored vertex of $\sigma(v_j)$. Let $x_l^i$ and $x_l^j$ be copies of $x_l$, which are the $l$-colored vertices in $\mathcal{K}(\Gamma^i)$ and $\mathcal{K}(\Gamma^j)$ respectively. Since the boundary components are disjoint, $x_l$ is identified with exactly one $l$-colored boundary vertex in $\mathcal{K}(\Gamma)$. If  $x_l$ is identified with the $l$-colored vertex of $\mathcal{K}(\partial^i\Gamma)$ (resp., $\mathcal{K}(\partial^j\Gamma)$) then $x_l^j$ (resp., $x_l^i$) is an interior vertex (as it can not be a boundary vertex) in $\mathcal{K}(\Gamma^j)$ (resp., $\mathcal{K}(\Gamma^i)$). If  $x_l$ is identified with the $l$-colored vertex of $\mathcal{K}(\partial^q\Gamma)$ for some $q\neq i,j$ then $x_l^i$ and $x_l^j$ both are interior  vertices in $\mathcal{K}(\Gamma^i)$ and $\mathcal{K}(\Gamma^j)$ respectively. 

If possible let there exists $k> j$ such that $\Gamma^{k}$ is connected with $\Gamma^{j}$ by some edges of color $c$. let $u_ju_k$ be an edge of color $c$ in $\Gamma$ such that $u_j\in V(\Gamma^j)$ and $u_k\in V(\Gamma^k)$. For  $l\in \Delta_4\setminus\{c,4\}$, let $y_l$ be the identified $l$-colored vertex of $\sigma(u_j)$ and $\sigma(u_k)$. Let $y_l^j$ and $y_l^k$ be copies of $y_l$, which are the $l$-colored vertices in $\mathcal{K}(\Gamma^j)$ and $\mathcal{K}(\Gamma^k)$ respectively. Let $x_l,x_l^i,x_l^j$ be as above for the pair $(\Gamma^i,\Gamma^{j})$.  If $x_l^j = y_l^j$ then the corresponding  vertices $x_l$ and $y_l$ are identified in $\mathcal{K}(\Gamma)$, and they are identified with exactly one $l$-colored boundary vertex in $\mathcal{K}(\Gamma)$. Thus, if $x_l \neq y_l$ are different then $x_l^j \neq y_l^j$ in $\mathcal{K}(\Gamma^j)$, and hence the $l$-colored interior vertices in $\mathcal K (\Gamma_{\Delta_4\setminus \{c\}})$ given by the pairs $(\Gamma^i,\Gamma^{j})$ and $(\Gamma^j,\Gamma^{k})$ are also different. Similar arguments hold if  $\Gamma^{k}$ is connected with $\Gamma^{i}$ by some edges of color $c$, i.e., for the pairs  $(\Gamma^i,\Gamma^{j})$ and $(\Gamma^i,\Gamma^{k})$.

For $1\leq q \leq h$, let $n_q$ denote the number of pairs  $(\Gamma^i,\Gamma^{j})$ such that the $l$-colored vertex $x_l$ is identified with the $l$-colored vertex of $\mathcal{K}(\partial^q\Gamma)$.
Then $n_q\geq 0$ and $n_1+n_2+\cdots+n_h=h-1$. From the above arguments it is clear that, the $n_q$ number of pairs  $(\Gamma^i,\Gamma^{j})$ give at least $n_q$ number of $l$-colored interior vertices in $\mathcal K (\Gamma_{\Delta_4\setminus \{c\}})$. Therefore, $\mathcal K (\Gamma_{\Delta_4\setminus \{c\}})$ has at least $h-1$ number of interior vertices labeled by the color $l$, where $l\neq c \in\{0,1,2,3\}$. This implies, $\dot g_{ab4}\geq h-1$ for  $a,b\in \{0,1,2,3\}$.
\end{proof}

\begin{lemma}\label{lemma:first}
Let $(\Gamma,\gamma)\in \mathbb{G}_4$ be a 5-colored graph which represents a connected compact PL $4$-manifold with boundary. Then
\begin{enumerate}[$(i)$]
\item $g_{ij4}=\dot{g}_{ij4}+\partial{g_{ij}}$,  
\item $g_{i4}=\dot{g}_{i4}+\overline{p}$.
\end{enumerate}  
\end{lemma}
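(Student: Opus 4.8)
The plan is to analyze the subgraphs $\Gamma_{\{i,j,4\}}$ and $\Gamma_{\{i,4\}}$ componentwise, separating each into its regular (cycle, resp.\ $3$-regular) components and its non-regular components, and then to match the non-regular components with the corresponding combinatorial data on the boundary graph $(\partial\Gamma,\partial\gamma)$. Recall that $(\Gamma,\gamma)\in\mathbb{G}_4$ means $\Gamma$ is $5$-regular with respect to the color $4$: every vertex meets exactly one $4$-colored edge except the boundary vertices, which meet none, and there are $2\overline p$ boundary vertices.

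For part $(ii)$, I would first observe that $\Gamma_{\{i,4\}}$ is a disjoint union of paths and cycles, since it is a graph in two colors. A connected component is $2$-regular (a cycle) precisely when none of its vertices is a boundary vertex; this accounts for the $\dot g_{i4}$ term. Every other component is a path whose two endpoints are exactly the vertices of that component not meeting a $4$-colored edge, i.e.\ boundary vertices — and no interior vertex can be an endpoint, since an interior vertex meets one edge of each color. Now the key counting step: in such a path the $i$- and $4$-colored edges alternate, and since both endpoints are boundary vertices, the endpoints carry the missing color, which forces the first and last edges to have color $i$; hence the path has an odd number of edges and contains an even number of boundary vertices, namely its two endpoints only (an interior boundary vertex is impossible). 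Wait — more carefully, a path component could in principle contain a boundary vertex in its interior only if that vertex had degree $\le 1$ in the component, which cannot happen for an interior vertex of the path; so every boundary vertex lies in exactly one non-regular component of $\Gamma_{\{i,4\}}$, and as an endpoint of it. Thus the number of non-regular components is $(\text{number of boundary vertices})/2 = \overline p$, giving $g_{i4}=\dot g_{i4}+\overline p$.

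For part $(i)$, I would argue by relating the non-regular components of $\Gamma_{\{i,j,4\}}$ to the $\{i,j\}$-colored cycles of the boundary graph $\partial\Gamma$. By the definition of the boundary graph, two boundary vertices $u_1,u_2$ are joined by a $j$-colored edge in $\partial\Gamma$ exactly when they are joined in $\Gamma$ by an alternating $j/4$-colored path; an analogous statement holds for color $i$. Hence the restriction $\partial\Gamma_{\{i,j\}}$ is obtained from $\Gamma_{\{i,j,4\}}$ by contracting each alternating $i/4$-path and each alternating $j/4$-path (equivalently, each maximal $4$-interspersed segment) to an edge and discarding the regular components. A regular ($3$-regular) component of $\Gamma_{\{i,j,4\}}$ has no boundary vertices, contributes nothing to $\partial\Gamma$, and is counted by $\dot g_{ij4}$. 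A non-regular component contains at least one boundary vertex, and I claim it descends to exactly one connected component of $\partial\Gamma_{\{i,j\}}$, i.e.\ exactly one $\{i,j\}$-colored cycle of $\partial\Gamma$; conversely each such cycle arises from exactly one non-regular component. This bijection gives $g_{ij4}-\dot g_{ij4}=\partial g_{ij}$, which is the claim.

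The main obstacle is the bijection asserted in part $(i)$: one must verify that the contraction/quotient operation taking $\Gamma_{\{i,j,4\}}$ to $\partial\Gamma_{\{i,j\}}$ restricts to a bijection on components between the non-regular components upstairs and all components downstairs. Surjectivity and "at least one" are routine from the definition of $\partial\Gamma$; the delicate point is that a single non-regular component of $\Gamma_{\{i,j,4\}}$ cannot split into two or more $\{i,j\}$-cycles of $\partial\Gamma$ and that distinct non-regular components cannot merge. For the non-splitting I would use connectivity: any two boundary vertices in the same component of $\Gamma_{\{i,j,4\}}$ are joined by a path in colors $\{i,j,4\}$, and by pushing the interior (non-boundary) vertices off along $4$-colored edges — using the $5$-regularity with respect to $4$ to keep the alternation coherent — one obtains a path between them in $\partial\Gamma$ using colors $\{i,j\}$. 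Non-merging is immediate since an $\{i,j\}$-edge of $\partial\Gamma$ lifts to a $j/4$- or $i/4$-path inside a single component of $\Gamma_{\{i,j,4\}}$. Once this is set up, both identities follow by counting components in the two pictures.
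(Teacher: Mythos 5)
Your part (ii) is correct and complete, and it is more elementary than the paper's argument: you read the identity directly off the path/cycle decomposition of $\Gamma_{\{i,4\}}$ (cycles are the regular components; the non-regular components are paths whose endpoints are exactly the $2\overline p$ boundary vertices, two per path), whereas the paper argues dually by counting $\{j,k,l\}$-labelled triangles of $\mathcal{K}(\Gamma)$ and of the subcomplex $\mathcal{K}(\partial\Gamma)$. Your argument even shows that (ii) holds for every graph in $\mathbb{G}_4$, with no manifold hypothesis.

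For part (i), however, the step you yourself flag as delicate --- that a single non-regular component of $\Gamma_{\{i,j,4\}}$ cannot give rise to two or more $\{i,j\}$-cycles of $\partial\Gamma$ --- is a genuine gap, and the proposed ``pushing interior vertices off along $4$-colored edges'' cannot close it, because that claim is false for general graphs in $\mathbb{G}_4$. Concretely, consider a connected $\{i,j,4\}$-colored component $C$ with boundary vertices $a_1,\dots,a_4$ and interior vertices $x_1,\dots,x_4$, with $4$-edges $x_1x_2,\,x_3x_4$, $i$-edges $a_1x_1,a_2x_2,a_3x_3,a_4x_4$, and $j$-edges $a_1x_3,a_2x_4,a_3x_1,a_4x_2$. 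This $C$ is connected and non-regular, yet both the $i/4$-paths and the $j/4$-paths pair the boundary vertices as $(a_1,a_2)$ and $(a_3,a_4)$, so it yields two distinct $\{i,j\}$-cycles of $\partial\Gamma$ (and indeed the path $a_1\to^{i}x_1\to^{j}a_3$ ``pushes off'' to the disjoint edges $a_1a_2$ and $a_3a_4$). Any purely combinatorial path-rewriting argument would apply to this $C$ as well, so it cannot succeed; what you actually establish (well-definedness plus surjectivity of the map from components of $\partial\Gamma_{\{i,j\}}$ to non-regular components of $\Gamma_{\{i,j,4\}}$) is only the inequality $\partial g_{ij}\geq g_{ij4}-\dot g_{ij4}$. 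The manifold hypothesis must enter at the non-splitting step: a non-regular component of $\Gamma_{\{i,j,4\}}$ is dual to a boundary $1$-simplex of $\mathcal{K}(\Gamma)$, whose link in a $4$-manifold with boundary is a $2$-disk with connected boundary circle, and that circle is the corresponding component of $\partial\Gamma_{\{i,j\}}$. Equivalently --- and this is the paper's route --- one invokes the fact that $\mathcal{K}(\partial\Gamma)$ is precisely the boundary subcomplex of $\mathcal{K}(\Gamma)$, so that non-regular components of $\Gamma_{\{i,j,4\}}$ and components of $\partial\Gamma_{\{i,j\}}$ both biject with the boundary edges of $\mathcal{K}(\Gamma)$ labelled by $\Delta_4\setminus\{i,j,4\}$.
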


\begin{proof}
Let $(\Gamma,\gamma)$ represent a connected compact PL $4$-manifold $M$ with boundary $\partial M$. Let $(\partial \Gamma, \partial \gamma)$ be the boundary graph of $(\Gamma,\gamma)$. Then 
 $(\partial \Gamma, \partial \gamma)$  represents $\partial M$. Let $\mathcal{K}(\Gamma)$ and $\mathcal{K}(\partial \Gamma)$  be the simplicial cell complexes corresponding to $(\Gamma,\gamma)$ and $(\partial \Gamma, \partial \gamma)$ respectively. Then $\mathcal{K}(\partial \Gamma)$  is a sub-complex of  $\mathcal{K}(\Gamma)$.  Let $\{k,l\}=\Delta_4 \setminus\{i,j,4\}$. Then $g_{ij4}$ 
denotes the number of edges between the vertices labeled by the colors $k$ and $l$ in $\mathcal{K}(\Gamma)$ and $\dot g_{ij4}$  denotes the number of interior edges between the vertices labeled by the colors $k$ and $l$ in $\mathcal{K}(\Gamma)$. Further,  $\partial g_{ij}$  denotes the number of edges between the vertices labeled by the colors $k$ and $l$ in $\mathcal{K}(\partial \Gamma)$, i.e.,  the number of boundary edges between the vertices labeled by the colors $k$ and $l$ in $\mathcal{K}(\Gamma)$. Therefore, $g_{ij4}=\dot{g}_{ij4}+\partial{g_{ij}}$. This proves Part $(i)$.

Let $\{j,k,l\}=\Delta_4 \setminus\{i,4\}$. Then $g_{i4}$ denotes the number of triangles with vertices labeled by the colors $j, k$ and $l$ in $\mathcal{K}(\Gamma)$ and $\dot g_{i4}$  denotes the number of interior triangles with vertices labeled by the colors $j, k$ and $l$ in $\mathcal{K}(\Gamma)$. Further, $\mathcal{K}(\partial \Gamma)$ has exactly $\bar p$ number of triangles with vertices labeled by any three fixed colors. This implies, the number of boundary triangles with vertices labeled by the colors $j, k$ and $l$ in $\mathcal{K}(\Gamma)$ is $\bar p$.
%the boundary triangle with vertices labeled by colors $j, k$ and $l$ in $\mathcal{K}(\Gamma)$ is same as the triangle with vertices labeled by colors $j, k$ and $l$ in $\mathcal{K}(\partial \Gamma)$. Since $\mathcal{K}(\partial \Gamma)$ has exactly $\bar p$ number of triangles with vertices labeled by any three fixed colors, 
 Therefore, $g_{i4}=\dot{g}_{i4}+\overline{p}$. This proves Part $(ii)$.
\end{proof}

Let $(\Gamma,\gamma)\in \mathbb{G}_4$ be a 5-colored graph which represents a connected compact PL $4$-manifold $M$ with boundary $\partial M$. Let $2M$ be the closed connected PL $4$-manifold obtained by gluing two copies of $M$ alongwith their boundaries $\partial M$. The closed connected manifold is called the double of the manifold $M$. It is easy to construct a 5-colored graph $(\Gamma',\gamma')$ which represents the closed connected PL $4$-manifold $2M$. Take two distinct copies  $\Gamma^{1}$ and $\Gamma^{2}$ of $\Gamma$ with vertices labeled by $v^{1}_{i}$ and $v^{2}_{i}$ respectively, where $v^{1}_{i}$ and $v^{2}_{i}$ are copies of the vertex $v$ of $\Gamma$. Now, if $v_j$ is a boundary vertex of $\Gamma$ then add a $4$-colored edge between $v^{1}_{j}$ and $v^{2}_{j}$. Since the boundary vertex of  $\Gamma$ represents a boundary facet in $\mathcal{K}(\Gamma)$, by joining two copies of boundary vertex by a $4$-colored edge the corresponding two copies of boundary facets get identified. Thus, the geometric carrier of $\mathcal{K}(\Gamma')$ is $2M$. For $\{i_0,i_1, \dots, i_k\} \subset \Delta_4$, let $g'_{i_0i_1 \cdots i_k}$ denote the number of connected components of $\Gamma^{'}_{\{i_0,i_1,\dots,i_k\}}$. We shall say that the 5-colored graph $(\Gamma',\gamma')$ which represents $2M$ is induced from $(\Gamma,\gamma)$. 

\begin{lemma}\label{lemma:relgprimeg}
Let $(\Gamma,\gamma)\in \mathbb{G}_4$ be a 5-colored graph which represents a connected compact PL $4$-manifold $M$ with boundary and let $(\Gamma^{\prime},\gamma')$ be induced from $\Gamma$ which represents $2M$.   Then for $i,j,k \in \{0,1,2,3\} $,
\begin{enumerate} [$(i)$]
\item $ g^{\prime}_{ijk}= 2 g_{ijk}$ and $g^{\prime}_{ij4}= g_{ij4}+\dot{g}_{ij4}$, 
\item  $g^{\prime}_{ij}= 2 g_{ij}$ and $g^{\prime}_{i4}= g_{i4}+\dot{g}_{i4}.$
 \end{enumerate} 
\end{lemma}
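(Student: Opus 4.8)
The plan is to analyze how the construction of $(\Gamma',\gamma')$ affects each monochromatic or bichromatic/trichromatic subgraph, treating the colors in $\{0,1,2,3\}$ separately from the color $4$. The key structural observation is that $\Gamma'$ consists of two disjoint copies $\Gamma^1,\Gamma^2$ of $\Gamma$, together with extra $4$-colored edges joining $v^1_j$ to $v^2_j$ for each boundary vertex $v_j$ of $\Gamma$, and no other new edges. Consequently, for any color set $B\subseteq\{0,1,2,3\}$, the subgraph $\Gamma'_B$ is simply the disjoint union $\Gamma^1_B\sqcup\Gamma^2_B$, because none of the new edges carries a color in $B$. This immediately gives $g'_{ijk}=2g_{ijk}$ and $g'_{ij}=2g_{ij}$, proving half of $(i)$ and half of $(ii)$ with essentially no work.

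The substance is in the subgraphs containing the color $4$, namely $\Gamma'_{\{i,4\}}$, $\Gamma'_{\{i,j,4\}}$ (equivalently their complements in $\mathcal K(\Gamma)$, i.e.\ edges/triangles of the simplicial cell complex). First I would treat $\Gamma'_{\{i,4\}}$. The subgraph $\Gamma_{\{i,4\}}$ of $\Gamma$ splits into two kinds of components: $\{i,4\}$-colored cycles (the $\dot g_{i4}$ regular components, using only internal $4$-edges and $i$-edges) and $\{i,4\}$-colored paths (the non-regular components, each of which starts and ends at a boundary vertex via a missing $4$-edge). By Lemma~\ref{lemma:first}$(ii)$, $g_{i4}=\dot g_{i4}+\overline p$, so there are exactly $2\overline p$ boundary vertices distributed as the two endpoints of each of the $\overline p$ paths. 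When we pass to $\Gamma'$, each cycle is duplicated (contributing $2\dot g_{i4}$ cycle-components), while the two copies of each path get their matching endpoints joined by the new $4$-edges, fusing the two copies of a single path into one cycle — so the $\overline p$ pairs of path-copies become $\overline p$ new components. Hence $g'_{i4}=2\dot g_{i4}+\overline p=\dot g_{i4}+(\dot g_{i4}+\overline p)=\dot g_{i4}+g_{i4}$, as claimed. I would make the pairing-of-endpoints step precise by noting that a boundary vertex $u$ of $\Gamma$ is an endpoint of exactly one $\{i,4\}$-colored path (it has no $4$-edge in $\Gamma$, and exactly one $i$-edge), and in $\Gamma'$ the new $4$-edge $u^1u^2$ is the unique $4$-edge at $u^1$ and at $u^2$.

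The identity $g'_{ij4}=g_{ij4}+\dot g_{ij4}$ follows by the same bookkeeping applied to the three-color subgraph $\Gamma_{\{i,j,4\}}$, or more cleanly by working in the complex: $g_{ij4}$ counts the $\{k,l\}$-colored edges of $\mathcal K(\Gamma)$ where $\{k,l\}=\Delta_4\setminus\{i,j,4\}$, of which $\dot g_{ij4}$ are interior and (by Lemma~\ref{lemma:first}$(i)$) $\partial g_{ij}$ are boundary edges; in $\mathcal K(\Gamma')$ every interior edge is doubled while each pair of copies of a boundary edge is identified into a single edge, giving $g'_{ij4}=2\dot g_{ij4}+\partial g_{ij}=\dot g_{ij4}+(\dot g_{ij4}+\partial g_{ij})=\dot g_{ij4}+g_{ij4}$. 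The main obstacle — really the only place care is needed — is the endpoint-matching argument: I must verify that the non-regular (path) components of $\Gamma_{\{i,4\}}$ and of $\Gamma_{\{i,j,4\}}$ have exactly two boundary endpoints each and that the new $4$-edges glue copies of the \emph{same} component rather than creating unexpected mergers. This is precisely where the hypothesis that the new $4$-edges only connect corresponding boundary vertices $v^1_j\!-\!v^2_j$ does the work, and I would spell it out carefully; the rest is the elementary "disjoint union doubles, glued boundary identifies" accounting.
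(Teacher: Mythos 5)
Your argument is correct and follows essentially the same route as the paper's own proof: components avoiding color $4$ are simply duplicated, while for components involving color $4$ the regular ones double and the two copies of each non-regular one are fused into a single component by the new $4$-colored edges, giving $2\dot g+(g-\dot g)=g+\dot g$. Your additional care about endpoint-matching (and the alternative count via interior versus boundary simplices of $\mathcal K(\Gamma)$, as in Lemma \ref{lemma:first}) only makes explicit what the paper leaves as ``clear from the construction.''
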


\begin{proof}
For $i,j,k \in \{0,1,2,3\}$, each component of $\Gamma_{i,j,k}$ is regular. Since $\Gamma^{\prime}$ consists of two copies of $\Gamma$ which are joined by $4$-colored edges, we have $ g^{\prime}_{ijk}= 2 g_{ijk}$. 
On the other hand, from construction of $\Gamma^{\prime}$ it is clear that
\begin{align*}
 g^{\prime}_{ij4}=&2 \times \mbox{(number of the regular components of } \Gamma_{ij4}) \\
 &+ \mbox{(number of the non-regular components of } \Gamma_{ij4})\\
 = &2 \dot{g}_{ij4} +(g_{ij4}- \dot{g}_{ij4})\\
 =&g_{ij4}+ \dot{g}_{ij4}\\
\end{align*}
This proves part $(i)$. Part $(ii)$ can be proved by similar arguments as above.
\end{proof}

\begin{lemma}
Let $M$ be a connected compact $4$-manifold with boundary. Let $2M$ be the double of the manifold $M$. Then  $\chi(2M)=2\chi(M)$
\end{lemma}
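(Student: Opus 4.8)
The plan is to use the standard inclusion-exclusion (Mayer--Vietoris) decomposition of the double $2M = M \cup_{\partial M} M$ together with the identity $\chi(A \cup B) = \chi(A) + \chi(B) - \chi(A \cap B)$, which holds for Euler characteristics of finite CW-complexes (or compact polyhedra). Writing $2M = M_1 \cup M_2$ where $M_1, M_2$ are the two copies of $M$ and $M_1 \cap M_2 = \partial M$, this gives $\chi(2M) = \chi(M) + \chi(M) - \chi(\partial M) = 2\chi(M) - \chi(\partial M)$. So the whole content reduces to showing $\chi(\partial M) = 0$.

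Next I would invoke the classical fact that every closed odd-dimensional manifold has Euler characteristic zero; here $\partial M$ is a closed $3$-manifold (possibly disconnected), and each of its components is a closed connected $3$-manifold, hence has Euler characteristic $0$ by Poincar\'e duality (the Betti numbers satisfy $b_i = b_{3-i}$, so the alternating sum vanishes; the same works with $\mathbb{Z}/2$ coefficients in the non-orientable case). Summing over components, $\chi(\partial M) = 0$. Substituting into the displayed identity yields $\chi(2M) = 2\chi(M)$, as claimed.

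Alternatively, and perhaps more in the spirit of the combinatorial machinery developed in the excerpt, I could argue directly from a crystallization $(\Gamma,\gamma)$ of $M$ and the induced graph $(\Gamma',\gamma')$ representing $2M$, using the face-counting formulas. The Euler characteristic of $|\mathcal{K}(\Gamma)|$ can be read off from the numbers $g_{i_1\cdots i_k}$ (the count of $k$-simplices with a given label set is $g_{\Delta_4 \setminus \mathcal{B}}$), and Lemma~\ref{lemma:relgprimeg} expresses every $g'$-quantity in terms of the corresponding $g$ and $\dot g$ quantities of $\Gamma$. Plugging these relations into the alternating sum $\chi(2M) = \sum_k (-1)^k (\text{number of } k\text{-simplices of } \mathcal{K}(\Gamma'))$ and simplifying, the $\dot g$-contributions should organize themselves so that $\chi(2M) = 2\chi(M) - \chi(\partial M)$, with $\chi(\partial M)$ again recognized as the Euler characteristic of a closed $3$-manifold and therefore zero.

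I expect the main (and really the only) obstacle to be bookkeeping rather than anything deep: in the topological approach one must be slightly careful that the inclusion-exclusion formula applies to the polyhedral decomposition $2M = M_1 \cup_{\partial M} M_2$ (it does, since all pieces are compact polyhedra glued along a common subpolyhedron), and one must correctly handle a disconnected boundary when citing $\chi = 0$ for closed $3$-manifolds. In the combinatorial approach the obstacle is purely the algebra of collecting the $g$ and $\dot g$ terms from Lemma~\ref{lemma:relgprimeg} and Lemma~\ref{lemma:first} into the Euler characteristic formula; this is routine but tedious. I would present the topological argument as the main proof for brevity, since $\chi(\partial M) = 0$ is the one genuinely used fact and it is entirely standard.
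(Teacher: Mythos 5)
Your proposal is correct and follows essentially the same route as the paper: the authors also write $\chi(2M)=\chi(M)+\chi(M)-\chi(\partial M)$ and conclude using $\chi(\partial M)=0$ because $\partial M$ is a closed $3$-manifold. Your extra justification of $\chi(\partial M)=0$ via Poincar\'e duality (handling the non-orientable and disconnected cases) only makes explicit what the paper takes for granted.
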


\begin{proof}
We know that, $\chi(2M)=\chi(M)+\chi(M)-\chi(\partial M)$. Since $\partial M$ is a closed 3-manifold, $\chi(\partial M)=0$. Therefore, $\chi(2M)=2\chi(M)$.  
\end{proof}

\begin{lemma}\label{lemma:rel2pgamma}
Let $(\Gamma,\gamma)\in \mathbb{G}_4$ be a crystallization of a connected compact PL $4$-manifold $M$ with $h$ boundary components. Let $(\Gamma^{\prime},\gamma')$ be the $5$-colored graph which represents $2M$ induced from $(\Gamma,\gamma)$.  Then, $|V(\Gamma)|=6\chi(M)+\sum_{0\leq i<j<k \leq 4}g^{\prime}_{ijk}-12h-6$.
\end{lemma}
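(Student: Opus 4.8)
The plan is to compute $|V(\Gamma)|$ via an Euler-characteristic count for the closed $4$-manifold $2M$, then translate everything back to quantities associated with $\Gamma$ itself using Lemmas \ref{lemma:first} and \ref{lemma:relgprimeg}. First I would write down the standard combinatorial formula for $\chi$ of the simplicial cell complex $\mathcal{K}(\Gamma')$ in terms of the numbers of $k$-simplices, which by the construction recalled in the preliminaries are exactly the numbers of connected components $g'_B$ of the subgraphs $\Gamma'_{\Delta_4 \setminus B}$. Since $\Gamma'$ is $5$-regular on $2\dot p + 4\overline p = |V(\Gamma')|$ vertices (every boundary vertex of $\Gamma$ having been capped by a $4$-colored edge), the number of $4$-simplices is $|V(\Gamma')|$, the number of $3$-simplices is $\sum_{i} g'_{\hat i}$ where the hat denotes deletion of a single color (five terms), the number of $2$-simplices is $\sum_{i<j} g'_{ij}$ (ten terms), the number of edges is $\sum_{i<j<k} g'_{ijk}$ (ten terms), and the number of vertices is $\sum_{i<j<k<l} g'_{ijkl}$ (five terms). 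This gives
\[
\chi(2M) \;=\; \sum_{i<j<k<l} g'_{ijkl} \;-\; \sum_{i<j<k} g'_{ijk} \;+\; \sum_{i<j} g'_{ij} \;-\; \sum_{i} g'_{\hat i} \;+\; |V(\Gamma')|.
\]

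Next I would eliminate the higher- and lower-index sums in favor of the single desired sum $\sum_{i<j<k} g'_{ijk}$. Here the key input is the relation of Proposition \ref{prop:preliminaries} applied to the closed PL $4$-manifold $2M$: for distinct $i,j,k \in \Delta_4$ we have $2g'_{ijk} = g'_{ij} + g'_{ik} + g'_{jk} - |V(\Gamma')|/2$. Summing this identity over all $\binom{5}{3}=10$ triples $\{i,j,k\}$ expresses $\sum g'_{ijk}$ linearly in terms of $\sum_{i<j} g'_{ij}$ and $|V(\Gamma')|$; a parallel count (each pair $\{i,j\}$ lies in $3$ triples, so $\sum_{i<j}g'_{ij}$ appears with multiplicity $3$ on the right) lets me solve for $\sum_{i<j} g'_{ij}$ in terms of $\sum g'_{ijk}$ and $|V(\Gamma')|$. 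The analogous Proposition \ref{prop:3mfd}-type or Proposition \ref{prop:preliminaries}-type relations, together with the fact (recalled in the excerpt) that each $\Gamma'_{\hat i}$ for $i \neq 4$ is connected as $2M$ is closed and $\mathcal{K}(\Gamma')$ has the minimal vertex count structure inherited from the crystallization of $M$ — more precisely, using that $g'_{\hat i}$ and $g'_{ijkl}$ can be read off from the boundary data of $\Gamma$ — let me replace $\sum_i g'_{\hat i}$ and $\sum_{i<j<k<l} g'_{ijkl}$ by explicit expressions in $h$ and $|V(\Gamma')|$. Concretely, since $(\Gamma,\gamma)$ is a crystallization of $M$ with $h$ boundary components, $\mathcal{K}(\Gamma)$ has exactly $4h+1$ vertices, which pins down $\sum_{i<j<k<l} g'_{ijkl}$ in $2M$ (the capping identifies paired boundary vertices), and $g'_{\hat i} = g(\Gamma'_{\hat i})$ is $1$ for the four colors $i \in \{0,1,2,3\}$ and equals the number $g'_{\hat 4}$ for color $4$, which by Lemma \ref{lemma:relgprimeg}$(i)$-flavored reasoning for one-color-deleted subgraphs contributes a controlled term. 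Substituting all of these into the Euler formula collapses it to an equation relating $|V(\Gamma')|$, $\chi(2M)$, $h$, and $\sum_{i<j<k} g'_{ijk}$ alone.

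Finally I would use $\chi(2M) = 2\chi(M)$ (the preceding lemma) and $|V(\Gamma')| = |V(\Gamma)| + 2\overline p$, together with the bookkeeping $|V(\Gamma)| = 2\dot p + 2\overline p$, to convert the equation into the asserted form $|V(\Gamma)| = 6\chi(M) + \sum_{0 \le i<j<k \le 4} g'_{ijk} - 12h - 6$. The boundary-vertex count $\overline p$ should cancel cleanly: the surplus of $4$-simplices in $\Gamma'$ over $\Gamma$ is exactly matched, through the capping edges, by the surplus in the $3$-, $2$-, $1$-, and $0$-simplex counts, and these surpluses are governed by the $\overline p$ boundary simplices of each dimension in $\mathcal{K}(\Gamma)$ as made explicit in Lemma \ref{lemma:first}. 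The main obstacle I anticipate is not any single step but keeping the constants straight: correctly accounting for which one-color-deleted subgraphs $\Gamma'_{\hat i}$ are connected versus split into $h$ pieces, correctly evaluating $\sum_{i<j<k<l} g'_{ijkl}$ from the minimal vertex count of the crystallization after doubling, and tracking the multiplicities $\binom{4}{2}=6$, $\binom{4}{1}=4$, $\binom{5}{3}=10$ that arise when summing the Proposition \ref{prop:preliminaries} identities — a sign or multiplicity error anywhere will throw off the final additive constant $-12h-6$. I would double-check the result on the ball $M = \mathbb{D}^4$ (where $h=1$, $\chi(M)=1$, $2M = \mathbb{S}^4$) as a sanity check before presenting the computation.
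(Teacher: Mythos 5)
Your overall route is essentially the paper's: the paper combines the Dehn--Sommerville relations for the closed $4$-manifold $2M$ into $6f_0-2f_1+f_4=6\chi(2M)$ and then substitutes $f_0=4h+2$, $f_4=2|V(\Gamma)|$, $f_1=\sum g'_{ijk}$; your plan of expanding $\chi(2M)$ as the alternating sum of face numbers and then eliminating $\sum_{i<j}g'_{ij}$ by summing Proposition \ref{prop:preliminaries} over the ten triples reproduces exactly the relation $2f_1-3f_2+5f_4=0$ that the paper extracts from Dehn--Sommerville, so the two computations are the same up to packaging. However, your displayed Euler formula contains a genuine error in the face-number dictionary. By the correspondence recalled in the preliminaries, a $k$-simplex labeled by $\mathcal{B}$ with $|\mathcal{B}|=k+1$ corresponds to a component of $\Gamma'_{\Delta_4\setminus\mathcal{B}}$; hence the $0$-simplices are counted by the \emph{four-color} subgraphs, $f_0=\sum_i g'_{\hat i}=\sum_{i<j<k<l}g'_{ijkl}$, while the $3$-simplices are counted by the \emph{one-color} subgraphs, $f_3=\sum_i g(\Gamma'_{\{i\}})=\tfrac{5}{2}|V(\Gamma')|$, since each $\Gamma'_{\{i\}}$ is a perfect matching. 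You have assigned the quantity $\sum_i g'_{\hat i}=\sum_{i<j<k<l}g'_{ijkl}$ to \emph{both} $f_0$ and $f_3$, so your formula effectively reads $2f_0-f_1+f_2+f_4$ in place of $f_0-f_1+f_2-f_3+f_4$; since $f_0=O(h)$ while $f_3=O(|V(\Gamma')|)$, this is not a recoverable constant slip and the identity cannot close as written.

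A second concrete error: you assert $g'_{\hat i}=1$ for $i\in\{0,1,2,3\}$. In fact $g'_{\hat i}=h$ for those colors (each boundary component of $M$ contributes one $i$-labeled vertex to $\mathcal{K}(\Gamma')$, and the doubling does not merge them) and $g'_{\hat 4}=2$, giving $f_0=4h+2$; this is precisely the source of the $-12h-6$ in the final formula, so getting it right is not optional. Once both corrections are made, your computation does go through: $\chi(2M)=(4h+2)-\tfrac13\sum g'_{ijk}+\tfrac16|V(\Gamma')|$, and with $\chi(2M)=2\chi(M)$ and $|V(\Gamma')|=2|V(\Gamma)|$ this rearranges to the stated identity. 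Your instinct to sanity-check on $M=\mathbb{D}^4$ would have caught both problems.
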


\begin{proof}
Let $2p$ be the number of vertices of $\Gamma$. Then, $f_0(\mathcal{K}(\Gamma))=4h+1$ and $f_4(\mathcal{K}(\Gamma))=2p$. Let $X=\mathcal{K}(\Gamma^{\prime})$ be the simplicial cell complex corresponding to $(\Gamma^{\prime},\gamma')$. Therefore, we have $f_0(X)=4h+2$ and $f_4(X)=|V(\Gamma^{\prime})|=4p$. Then, the Dehn Sommerville equations in four dimension give 
\begin{eqnarray*}
f_0 (X)-f_1 (X)+f_2 (X)-f_3 (X)+f_4 (X)=\chi(2M),\\
2f_1(X)-3f_2(X)+4f_3(X)-5f_4(X)=0,\\
2f_3(X)-5f_4(X)=0.\\
\end{eqnarray*} 
These  give 
\begin{eqnarray*}
2f_1(X)-3f_2(X)+5f_4(X)=0,\\
2f_0(X)-2f_1(X)+2f_2(X)-3f_4(X)=2\chi(2M).
\end{eqnarray*}
This implies,
 \begin{equation*}
 6f_0(X)-2f_1(X)+f_4(X)=6\chi(2M).
 \end{equation*}
 Substituting values of $f_0(X)$, $f_4(X)$ and $\chi(2M)=2\chi(M)$, we get $$2p=6\chi(M)+f_1(X)-12h-6.$$
 Thus, $|V(\Gamma)|=6\chi(M)+\sum_{0\leq i<j<k \leq 4}g^{\prime}_{ijk}-12h-6$.
\end{proof}

\begin{lemma}\label{lemma:2pand2pbar}
Let $(\Gamma,\gamma)\in \mathbb{G}_4$ be a crystallization of a connected compact PL $4$-manifold $M$ with $h$ boundary components. Let $2p$ and $2\overline{p}$ be the number of vertices of $\Gamma$ and $\partial \Gamma$ respectively. Then, $2p+2\bar p=6\chi(M)+2\sum_{0 \leq i<j<k \leq 4} g_{ijk}-16h-6$.
\end{lemma}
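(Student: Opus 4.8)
The plan is to combine the vertex-count formula of Lemma~\ref{lemma:rel2pgamma} with the relations of Lemma~\ref{lemma:relgprimeg} expressing the primed quantities $g'_{ijk}$ in terms of the unprimed ones. First I would recall from Lemma~\ref{lemma:rel2pgamma} that $2p = |V(\Gamma)| = 6\chi(M) + \sum_{0\le i<j<k\le 4} g'_{ijk} - 12h - 6$. The sum $\sum_{0\le i<j<k\le 4} g'_{ijk}$ splits according to whether the color $4$ appears in the triple or not: there are the four terms $g'_{ij4}$ with $\{i,j\}\subset\{0,1,2,3\}$, and the $\binom{4}{3}=4$ terms $g'_{ijk}$ with $i,j,k\in\{0,1,2,3\}$. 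By Lemma~\ref{lemma:relgprimeg}$(i)$, each of the latter equals $2g_{ijk}$, and each term $g'_{ij4}$ equals $g_{ij4}+\dot g_{ij4}$.

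Next I would handle the terms containing the color $4$. Using Lemma~\ref{lemma:first}$(i)$, $g_{ij4}=\dot g_{ij4}+\partial g_{ij}$, so $g'_{ij4}=g_{ij4}+\dot g_{ij4}=2g_{ij4}-\partial g_{ij}$. Summing over the four pairs $\{i,j\}\subset\{0,1,2,3\}$ and over the four triples $\{i,j,k\}\subset\{0,1,2,3\}$ gives
\[
\sum_{0\le i<j<k\le 4} g'_{ijk} \;=\; 2\!\!\sum_{0\le i<j<k\le 4} g_{ijk} \;-\; \sum_{\{i,j\}\subset\{0,1,2,3\}} \partial g_{ij}.
\]
The final ingredient is to identify $\sum_{\{i,j\}\subset\{0,1,2,3\}}\partial g_{ij}$ with something involving $\bar p$ and $h$. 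Since $(\partial\Gamma,\partial\gamma)$ is a disjoint union of $h$ crystallizations of closed $3$-manifolds, each on the same number $2\bar p_q$ of vertices with $\sum_q 2\bar p_q = 2\bar p$, Proposition~\ref{prop:3mfd} applied componentwise — specifically condition $(ii)$, $g_{01}+g_{02}+g_{03}=2+n/2$ for a $4$-colored crystallization on $n$ vertices, together with condition $(i)$ which makes $\partial g_{ij}=\partial g_{kl}$ for complementary pairs — yields $\sum_{\{i,j\}\subset\{0,1,2,3\}}\partial g_{ij} = 2\big(g_{01}(\partial\Gamma)+g_{02}(\partial\Gamma)+g_{03}(\partial\Gamma)\big) = 2\sum_{q=1}^{h}(2+\bar p_q) = 4h + 2\bar p$.

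Substituting this into the displayed identity and then into the formula from Lemma~\ref{lemma:rel2pgamma}, I obtain
\[
2p \;=\; 6\chi(M) + 2\!\!\sum_{0\le i<j<k\le 4} g_{ijk} - (4h+2\bar p) - 12h - 6,
\]
and rearranging gives exactly $2p + 2\bar p = 6\chi(M) + 2\sum_{0\le i<j<k\le 4} g_{ijk} - 16h - 6$. The main obstacle I anticipate is the careful bookkeeping of the six triples of $\{0,1,2,3,4\}$, keeping straight which $\partial g_{ij}$ (a quantity living in $\partial\Gamma$, indexed by a pair among $\{0,1,2,3\}$) corresponds to which complementary pair — and verifying that each of the four "boundary" triples $g_{ij4}$ contributes one copy of some $\partial g_{kl}$ so that the total over the four triples is $\sum_{\{k,l\}\subset\{0,1,2,3\}}\partial g_{kl}$ rather than a sum with repetitions. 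Everything else is routine linear algebra once Lemmas~\ref{lemma:first}, \ref{lemma:relgprimeg}, \ref{lemma:rel2pgamma} and Proposition~\ref{prop:3mfd} are in hand.
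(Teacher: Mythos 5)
Your proposal is correct and takes essentially the same route as the paper's proof: it combines Lemma~\ref{lemma:rel2pgamma} with Lemmas~\ref{lemma:relgprimeg} and \ref{lemma:first}, and identifies $\sum_{0\leq i<j\leq 3}\partial g_{ij}=4h+2\overline{p}$ via Proposition~\ref{prop:3mfd} applied componentwise (the paper derives this by summing the four instances of condition $(ii)$ over the triples of $\Delta_3$, while you use condition $(i)$ plus one instance of condition $(ii)$ --- the same computation). The only slip is that there are $\binom{4}{2}=6$ triples of the form $\{i,j,4\}$, not four; since your displayed sums run over the full index sets, this miscount does not affect the argument.
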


\begin{proof}
Since $\Gamma$ is a crystallization of $M$, each component of $\partial\Gamma$ is a crystallization of one of the components of $\partial M$. For $1\leq q \leq h$, let $\partial^{q} \Gamma$ represent $q^{th}$ component of $\partial \Gamma$ and $\partial^{q} g_{ij}$ denote the number of cycles with colors $i,j$ in $\partial^{q} \Gamma$. By the characterization of a crystallization of a closed connected $3$-manifold (cf. Proposition \ref{prop:3mfd}), we get
\begin{equation*}
\partial^{q}g_{ij} +\partial^{q}g_{ik}+\partial^{q}g_{jk}=2+|V(\partial^{q} \Gamma)|/2, \mbox{ for } i,j,k \in \Delta_{3}. 
\end{equation*}
As this is true for each $q$, taking summation over $1\leq q \leq h$, we have
\begin{eqnarray}
\partial g_{ij} +\partial g_{ik}+\partial g_{jk}=2h+|V(\partial \Gamma)|/2, \mbox{ for } i,j,k \in \Delta_{3}.\label{eqncomponent}
\end{eqnarray}
By taking summation of all the four equations obtained in Eq. \eqref{eqncomponent}, we have
\begin{eqnarray}
\sum_{0 \leq i<j \leq 3} \partial g_{ij}= 4h+2\overline{p}.\label{eqn:adding}
\end{eqnarray}
Now, from Lemma \ref{lemma:rel2pgamma}, we have

\begin{align*}
2p &=6\chi(M)+\sum_{0\leq i<j<k \leq 4} g^{\prime}_{ijk}-12h-6 \\
&=6\chi(M)+2\sum_{0\leq i<j<k \leq 3} g_{ijk} + \sum_{0 \leq i<j \leq 3} (g_{ij4}+\dot g_{ij4}) -12h-6 \mbox{ (by applying   Lemma \ref{lemma:relgprimeg})}\\ 
&=6\chi(M)+2\sum_{0\leq i<j<k \leq 4} g_{ijk}-\sum_{0 \leq i<j \leq 3} \partial g_{ij} -12h-6 \mbox{ (by applying   Lemma \ref{lemma:first})}\\
&=6\chi(M)+2\sum_{0\leq i<j<k \leq 4}g_{ijk}-2\overline{p}-16h-6  \mbox{ (from Eq. \eqref{eqn:adding})}.
\end{align*}
And the result follows.
\end{proof}

\begin{lemma}\label{lemma:bound}
Let $(\Gamma,\gamma)\in \mathbb{G}_4$ be a crystallization of a connected compact PL $4$-manifold $M$ with $h$ boundary components. Let $2p$ and $2\overline{p}$ be the number of vertices of $\Gamma$ and $\partial \Gamma$ respectively. Let $m$ be the rank of of the fundamental group of the manifold $M$. Then,
\begin{enumerate}[$(i)$]
\item $2p \geq 6\chi(M)+14m+14h-18$,
\item $ 2p+ 2\bar p \geq 6\chi(M)+20m+16h-18$,
\item $ 2p- 2\bar p\geq 6\chi(M)+8m+12h-18$.
\end{enumerate}
\end{lemma}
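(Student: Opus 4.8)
The plan is to combine the exact identities from Lemmas \ref{lemma:2pand2pbar} and \ref{lemma:rel2pgamma} with the fundamental-group estimate of Proposition \ref{prop:rank} and the interior-triangle estimate of Lemma \ref{lemma:interior}, thereby converting the equalities of the previous lemmas into the claimed inequalities. The quantities appearing are: $2p$ (expressed via $\sum_{i<j<k} g^{\prime}_{ijk}$ through Lemma \ref{lemma:rel2pgamma}), the boundary contribution $2\bar p$ (linked to the $g_{ijk}$'s via Eq. \eqref{eqn:adding}), and the combinatorial data $g_{ij}$, $g_{ijk}$, $\dot g_{ij4}$ of the crystallization. The whole argument reduces to finding a good lower bound for $\sum_{0\le i<j<k\le 4} g_{ijk}$ (equivalently $\sum g^{\prime}_{ijk}$) in terms of $m$ and $h$.

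First I would recall from Proposition \ref{prop:rank} that for every pair $a,b\in\Delta_4$,
\[
g(\Gamma_{\Delta_4\setminus\{a,b\}}) \;\ge\; m + g(\Gamma_{\Delta_4\setminus\{a\}}) + g(\Gamma_{\Delta_4\setminus\{b\}}) - 1,
\]
and that since $\Gamma$ is a crystallization with $h$ boundary components we have $g(\Gamma_{\hat c}) = g(\Gamma_{\Delta_4\setminus\{c\}}) = h$ for $c\in\{0,1,2,3\}$ and $g(\Gamma_{\hat 4}) = 1$. Note $g(\Gamma_{\Delta_4\setminus\{a,b\}}) = g_{ijk}$ where $\{i,j,k\} = \Delta_4\setminus\{a,b\}$. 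So applying the proposition to the $\binom{4}{2}=6$ pairs $\{a,b\}$: for the pair $\{a,b\}\subset\{0,1,2,3\}$ we get $g_{ijk}\ge 2h+m-1$ (three such triples, the ones containing $4$); for the pair $\{a,4\}$ with $a\in\{0,1,2,3\}$ we get $g_{ijk}\ge h+1+m-1 = h+m$ (four such triples $\{i,j,k\}\subset\{0,1,2,3\}$ — wait, there are $\binom{4}{1}=4$ choices of $a$, giving the four triples contained in $\{0,1,2,3\}$). Summing these six inequalities yields
\[
\sum_{0\le i<j<k\le 4} g_{ijk} \;\ge\; 3(2h+m-1) + 4\cdot?
\]
— I must be careful with the count: there are $\binom{5}{3}=10$ triples, four contained in $\{0,1,2,3\}$ and six containing the color $4$. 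The four triples in $\{0,1,2,3\}$ correspond to excluding a pair $\{a,4\}$, so each satisfies $g_{ijk}\ge h+m$; the six triples containing $4$ correspond to excluding a pair in $\{0,1,2,3\}$, so each satisfies $g_{ijk}\ge 2h+m-1$. Hence $\sum g_{ijk} \ge 4(h+m) + 6(2h+m-1) = 16h + 10m - 6$. Substituting into Lemma \ref{lemma:2pand2pbar} gives $2p+2\bar p \ge 6\chi(M) + 2(16h+10m-6) - 16h - 6 = 6\chi(M) + 20m + 16h - 18$, which is part $(ii)$.

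For part $(iii)$ I would instead bound $2p - 2\bar p$. Adding Lemma \ref{lemma:2pand2pbar} to the identity $2p = 6\chi(M) + \sum g^{\prime}_{ijk} - 12h - 6$ of Lemma \ref{lemma:rel2pgamma}, or more directly using $g^{\prime}_{ij4} = g_{ij4}+\dot g_{ij4}$ and Lemma \ref{lemma:first} ($g_{ij4} = \dot g_{ij4} + \partial g_{ij}$) together with Eq. \eqref{eqn:adding}, one gets $2p - 2\bar p$ expressed through $\sum_{i<j<k\le 3} g_{ijk}$ and $\sum_{i<j\le3}\dot g_{ij4}$. Here I apply two estimates: the four triples $\{i,j,k\}\subset\{0,1,2,3\}$ each satisfy $g_{ijk}\ge h+m$ from above, and Lemma \ref{lemma:interior} gives $\dot g_{ab4}\ge h-1$ for each of the six pairs $a,b\in\{0,1,2,3\}$. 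Tracking the coefficients carefully in the expression from Lemma \ref{lemma:rel2pgamma} (namely $2p = 6\chi(M) + 2\sum_{i<j<k\le3}g_{ijk} + \sum_{i<j\le 3}g_{ij4} + \sum_{i<j\le3}\dot g_{ij4} - 12h - 6$) and combining with $2\bar p = \frac12\sum_{i<j\le3}\partial g_{ij} \cdot ?$ — more precisely using Eq. \eqref{eqn:adding} $\sum\partial g_{ij} = 4h + 2\bar p$ and $g_{ij4} = \dot g_{ij4} + \partial g_{ij}$ — should yield $2p - 2\bar p \ge 6\chi(M) + 8m + 12h - 18$. Finally, part $(i)$ follows from $(ii)$ and $(iii)$ either by averaging (adding them and dividing by $2$ gives $2p \ge 6\chi(M) + 14m + 14h - 18$) or directly.

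The main obstacle I anticipate is bookkeeping rather than conceptual: correctly counting which of the ten triples inherit the bound $h+m$ versus $2h+m-1$ from Proposition \ref{prop:rank}, and then assembling the right linear combination of the identities in Lemmas \ref{lemma:rel2pgamma}, \ref{lemma:2pand2pbar}, \ref{lemma:first} and Eq. \eqref{eqn:adding} so that the $\dot g_{ij4}$ terms and the $\partial g_{ij}$ terms cancel against $2\bar p$ with the correct signs. One must also double-check that the estimate from Proposition \ref{prop:rank} applied with $a,b\in\{0,1,2,3\}$ indeed gives $2h+m-1$ (since $g(\Gamma_{\hat a})=g(\Gamma_{\hat b})=h$) and applied with one index equal to $4$ gives $h+m$ (since one of the two terms $g(\Gamma_{\hat 4})$ equals $1$), and that Lemma \ref{lemma:interior} is only needed for part $(iii)$ while part $(ii)$ uses only Proposition \ref{prop:rank}.
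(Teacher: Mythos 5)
Your proposal is correct and follows essentially the same route as the paper: Proposition \ref{prop:rank} gives $g_{ijk}\ge m+h$ for triples in $\{0,1,2,3\}$ and $g_{ij4}\ge m+2h-1$ for triples containing $4$, Lemma \ref{lemma:interior} gives $\dot g_{ij4}\ge h-1$, and these are substituted into the identities of Lemmas \ref{lemma:rel2pgamma} and \ref{lemma:2pand2pbar} (the bookkeeping you left as ``should yield'' in part $(iii)$ does close up exactly as you describe, giving $2p-2\bar p=6\chi(M)+2\sum_{i<j<k\le 3}g_{ijk}+2\sum_{i<j\le 3}\dot g_{ij4}-8h-6$). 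The only cosmetic difference is that the paper proves part $(i)$ directly from the expansion of $2p$ rather than by averaging $(ii)$ and $(iii)$, but the two derivations use the same estimates and are equivalent.
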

\begin{proof}
Because $\Gamma$ is a crystallization of a connected compact PL $4$-manifold $M$ with $h$ boundary components, we have $g(\Gamma_{\{0,1,2,3\}})=1$ and $g(\Gamma_{\Delta_4\setminus\{c\}})=h$, for $0 \leq c \leq 3$. From Proposition \ref{prop:rank}, we know that for $a,b \in \Delta_4$, 
$$m \leq g(\Gamma_{\Delta_4\setminus\{a,b\}})-\big(g(\Gamma_{\Delta_4\setminus\{a\}})+g(\Gamma_{\Delta_4\setminus\{b\}})-1\big).$$ 
This implies,
$$g_{ijk}\geq m+h \mbox{ and } g_{ij4}\geq m+2h-1 \mbox{ for } i,j,k \in \{0,1,2,3\}.$$
Therefore, from Lemma \ref{lemma:rel2pgamma}, we have 

\begin{align*}
2p &=6\chi(M)+\sum_{0\leq i<j<k \leq 4} g^{\prime}_{ijk}-12h-6, \\
&=6\chi(M)+2\sum_{0\leq i<j<k \leq 3} g_{ijk} + \sum_{0 \leq i<j \leq 3} (g_{ij4}+\dot g_{ij4}) -12h-6 \mbox{ (by applying   Lemma \ref{lemma:relgprimeg})}\\ 
&\geq 6\chi(M)+8(m+h)+6(m+2h-1)+6(h-1)-12h-6\\
&=6\chi(M)+14m+14h-18.
\end{align*}
This proves Part $(i)$.

Further, from Lemma \ref{lemma:2pand2pbar} we have
\begin{align*}
2p+2\overline{p}&=6\chi(M)+2\sum_{0 \leq i<j<k \leq 4} g_{ijk}-16h-6\\
&\geq 6\chi(M) +2\{4(m+h)+6(m+2h-1)\}-16h-6\\
& =6\chi(M)+20m+16h-18.
\end{align*}
This proves Part $(ii)$.

From Eq. \eqref{eqn:adding}, we have
\begin{align*}
2\overline{p}&= \sum_{0 \leq i<j \leq 3} \partial g_{ij}-4h.\\
&= \sum_{0 \leq i<j \leq 3} (g_{ij4} - \dot g_{ij4}) -4h \mbox{ (by applying  Lemma \ref{lemma:first}) }
\end{align*}
Therefore,
\begin{align*}
2p -2\bar p = & 6\chi(M)+\sum_{0\leq i<j<k \leq 4} g^{\prime}_{ijk}-12h-6 -  \sum_{0 \leq i<j \leq 3} g_{ij4}+  \sum_{0 \leq i<j \leq 3} \dot g_{ij4}+4h \\
=& 6\chi(M)+2\sum_{0\leq i<j<k \leq 3} g_{ijk} + \sum_{0 \leq i<j \leq 3} (g_{ij4}+\dot g_{ij4}) -12h-6 \\ 
& -  \sum_{0 \leq i<j \leq 3} g_{ij4}+  \sum_{0 \leq i<j \leq 3} \dot g_{ij4}+4h \\
= &6\chi(M)+2\sum_{0\leq i<j<k \leq 3} g_{ijk}+  2\sum_{0 \leq i<j \leq 3} \dot g_{ij4} -8h-6  \\ 
\geq &6\chi(M)+8(m+h)+12(h-1)-8h-6 \\
=&6\chi(M)+8m+12h-18.
\end{align*}
This proves Part $(iii)$.
\end{proof}

Let $M$ be a connected compact PL $4$-manifold with $h$ boundary components. Then the gem-complexity  $\mathit{k}(M) $ of  $M$ is defined as the non-negative integer $p - 1$, where $2p$ is the minimum number of vertices of a crystallization of $M$. Thus, we have the following result.

\begin{theorem}\label{theorem:main1}
Let $M$ be a connected compact PL $4$-manifold with $h$ boundary components. Let $m$ be the rank of the fundamental group of the manifolds $M$. Then,
\begin{enumerate}[$(i)$] 
\item $\mathit{k}(M)\geq 3\chi(M)+7m+7h-10$,
\item $\mathit{k}(M)\geq \mathit{k}(\partial M)+3\chi(M)+4m+6h-9$.
\end{enumerate} 
\end{theorem}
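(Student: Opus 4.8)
The plan is to derive Theorem~\ref{theorem:main1} directly from Lemma~\ref{lemma:bound}, since the gem-complexity is just $p-1$ for an optimal crystallization. Fix a crystallization $(\Gamma,\gamma)\in\mathbb{G}_4$ of $M$ realizing the minimum number of vertices, so that $2p$ is minimal and $\mathit{k}(M)=p-1$.

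\medskip

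\noindent\textbf{Part $(i)$.} From Lemma~\ref{lemma:bound}$(i)$ we have $2p\geq 6\chi(M)+14m+14h-18$, hence $p\geq 3\chi(M)+7m+7h-9$, and therefore $\mathit{k}(M)=p-1\geq 3\chi(M)+7m+7h-10$. This is essentially a one-line deduction once Lemma~\ref{lemma:bound} is in hand.

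\medskip

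\noindent\textbf{Part $(ii)$.} Here I would combine Lemma~\ref{lemma:bound}$(ii)$ and $(iii)$ to bring in the boundary. The key observation is that $2\overline{p}$ is the number of vertices of the boundary graph $\partial\Gamma$, and since each component $\partial^q\Gamma$ is a crystallization of the corresponding component $\partial^q M$ (as $\Gamma$ is a crystallization of $M$), we get $2\overline{p}\geq \sum_{q} (2\mathit{k}(\partial^q M)+2) = 2\mathit{k}(\partial M)+2h$, i.e. $\overline{p}\geq \mathit{k}(\partial M)+h$. Combining this with the average of parts $(ii)$ and $(iii)$ of Lemma~\ref{lemma:bound}, or more directly: add $\overline{p}\geq \mathit{k}(\partial M)+h$ to the inequality $2p-2\overline{p}\geq 6\chi(M)+8m+12h-18$ suitably scaled. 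Concretely, take half of Lemma~\ref{lemma:bound}$(iii)$: $p-\overline{p}\geq 3\chi(M)+4m+6h-9$, then add $\overline{p}\geq \mathit{k}(\partial M)+h$ to obtain $p\geq \mathit{k}(\partial M)+3\chi(M)+4m+7h-9$. Wait --- this gives $7h$, not $6h$; I must be careful. Let me instead use $\overline{p}\ge \mathit{k}(\partial M)+h$ only partially, or re-examine: since $\mathit{k}(M)=p-1$, I need $p-1\geq \mathit{k}(\partial M)+3\chi(M)+4m+6h-9$, i.e. $p\geq \mathit{k}(\partial M)+3\chi(M)+4m+6h-8$. From $p-\overline p\geq 3\chi(M)+4m+6h-9$ and $\overline p\geq \mathit{k}(\partial M)+h$ I get $p\geq \mathit{k}(\partial M)+3\chi(M)+4m+7h-9\geq \mathit{k}(\partial M)+3\chi(M)+4m+6h-8$ precisely when $h\geq 1$, which holds by assumption. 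So the chain closes.

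\medskip

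\noindent The main obstacle I anticipate is the passage from $\overline p$ (half the vertex count of $\partial\Gamma$) to $\mathit{k}(\partial M)$: one must check that the boundary graph of the \emph{optimal} crystallization of $M$, while certainly a crystallization of $\partial M$, need not be an \emph{optimal} one, so the inequality goes in the right (safe) direction $2\overline p\geq 2\mathit{k}(\partial M)+2h$ rather than equality. One also has to be slightly careful that $\mathit{k}(\partial M)$ is defined componentwise as $\sum_q \mathit{k}(\partial^q M)$ and that Proposition~\ref{prop:3mfd} guarantees each $\partial^q\Gamma$ has at least $4\cdot 1+\cdots$; but since each $\partial^q\Gamma$ is already \emph{a} crystallization of $\partial^q M$, the bound $|V(\partial^q\Gamma)|\geq 2(\mathit{k}(\partial^q M)+1)$ is immediate from the definition of gem-complexity. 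Beyond this bookkeeping the argument is a routine combination of the displayed inequalities together with $h\geq 1$, and I would present it as such.
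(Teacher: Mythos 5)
Your proposal is correct and is essentially the paper's own (implicit) argument: Theorem \ref{theorem:main1} is stated immediately after Lemma \ref{lemma:bound} precisely as the deduction you carry out, with part $(i)$ coming from Lemma \ref{lemma:bound}$(i)$ and part $(ii)$ from Lemma \ref{lemma:bound}$(iii)$ together with $\overline{p}\geq \sum_{q}\big(\mathit{k}(\partial^q M)+1\big)$. The only point to settle is the convention for disconnected boundary: the paper (see the proof of Theorem \ref{theorem:connected-sum-1}) takes $\mathit{k}(\partial M)=\sum_{q}\mathit{k}(\partial^q M)+h-1$, under which your inequality reads $\overline{p}\geq \mathit{k}(\partial M)+1$ and part $(ii)$ follows exactly, without needing the extra slack from $h\geq 1$ that you invoke.
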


\begin{figure}[H]
\tikzstyle{vert}=[circle, draw, fill=black!100, inner sep=0pt, minimum width=4pt] \tikzstyle{vertex}=[circle,
draw, fill=black!00, inner sep=0pt, minimum width=4pt] \tikzstyle{ver}=[] \tikzstyle{extra}=[circle, draw,
fill=black!50, inner sep=0pt, minimum width=2pt] \tikzstyle{edge} = [draw,thick,-] \centering
\begin{tikzpicture}[scale=0.5]

\begin{scope}[shift={(9,6)}]
\node[ver] (4) at (1,-5){\tiny{$4$}}; 
\node[ver] (3) at (1,-4.5){\tiny{$3$}}; 
\node[ver] (2) at (1,-4){\tiny{$2$}};
\node[ver](1) at (1,-3.5){\tiny{$1$}}; 
\node[ver](0) at (1,-3){\tiny{$0$}}; 

\node[ver] (9) at (3,-5){}; 
\node[ver] (8) at (3,-4.5){}; 
\node[ver](7) at (3,-4){}; 
\node[ver](6) at (3,-3.5){}; 
\node[ver] (5) at (3,-3){};

\path[edge] (0) -- (5);
\draw[line width=2pt, line cap=rectengle, dash pattern=on 1pt off 1]   plot [smooth,tension=1] (1.2,-3) -- (5);
\path[edge] (1) -- (6);
\draw [line width=3pt, line cap=round, dash pattern=on 0pt off 2\pgflinewidth] (1) -- (6);
\path[edge] (2) -- (7);
\path[edge,dashed] (3) -- (8);
\path[edge,dotted] (4) -- (9);
\end{scope}

\begin{scope}
\foreach \x/\y/\z/\w in {0/1/0.5/v_{2},0/3/3.5/v_{3},2/1/0.5/v_{4},2/3/3.5/v_{5},4/1/0.5/v_{6},4/3/3.5/v_{7}, 6/1/0.5/v_{8},6/3/3.5/v_{9}, 8/1/0.5/v_{10},8/3/3.5/v_{1}}
{\node[ver](\w) at (\x,\z){\tiny{$\w$}};
\node[vert] (\w) at (\x,\y){};}

\foreach \x/\y/\z/\w in {v_{2}/v_{3}/-0.5/2, v_{2}/v_{3}/0/2, v_{4}/v_{5}/1.5/2, v_{6}/v_{7}/3.5/2,  v_{8}/v_{9}/5.5/2,  v_{8}/v_{9}/6/2,   v_{10}/v_{1}/7.5/2,  v_{10}/v_{1}/8/2,  v_{10}/v_{1}/8.5/2}
{\draw[edge] plot [smooth,tension=1] coordinates{(\x) (\z,\w) (\y)};
}

\foreach \x/\y in {v_{2}/v_{4}, v_{3}/v_{5}, v_{5}/v_{7}, v_{6}/v_{8},  v_{6}/v_{4},  v_{7}/v_{9}}
{\draw[edge] (\x) -- (\y);
}

\foreach \x/\y/\z/\w in {v_{6}/v_{7}/3.5/2,  v_{8}/v_{9}/5.5/2,  v_{10}/v_{1}/7.5/2}
{\draw[line width=2pt, line cap=rectengle, dash pattern=on 1pt off 1]   plot [smooth,tension=1] coordinates{(\x) (\z,\w) (\y)};
}

\foreach \x/\y in {v_{2}/v_{4},  v_{3}/v_{5}}
{\draw[line width=2pt, line cap=rectengle, dash pattern=on 1pt off 1]  (\x) -- (\y);
}

\foreach \x/\y/\z/\w in {v_{2}/v_{3}/-0.5/2,   v_{8}/v_{9}/6/2,  v_{10}/v_{1}/8/2}
{\draw [line width=3pt, line cap=round, dash pattern=on 0pt off 2\pgflinewidth]  plot [smooth,tension=1] coordinates{(\x) (\z,\w) (\y)};
}

\foreach \x/\y in {v_{6}/v_{4},  v_{7}/v_{5}}
{\draw [line width=3pt, line cap=round, dash pattern=on 0pt off 2\pgflinewidth]  (\x) -- (\y);
}

\foreach \x/\y/\z/\w in {v_{2}/v_{3}/0.5/2, v_{4}/v_{5}/2/2, v_{6}/v_{7}/4/2}
{\draw[edge, dashed] plot [smooth,tension=1] coordinates{(\x) (\z,\w) (\y)};
}

\foreach \x/\y/\z/\w in {v_{6}/v_{7}/4.5/2, v_{4}/v_{5}/2.5/2, v_{8}/v_{9}/6.5/2}
{\draw[edge, dotted] plot [smooth,tension=1] coordinates{(\x) (\z,\w) (\y)};
}

\foreach \x/\y in {v_{1}/v_{9}, v_{8}/v_{10}}
{\draw[edge, dashed] (\x) -- (\y);
}
\draw[edge, dotted] plot [smooth,tension=1] coordinates{(v_{1}) (9,2) (v_{10})};
\draw[edge, dotted] plot [smooth,tension=1] coordinates{(v_{2}) (-1,2) (v_{3})};
\end{scope}

\end{tikzpicture}
\caption{A  crystallization $(\Gamma,\gamma)$ of $\mathbb{S}^4$.}\label{fig:1}
\end{figure}
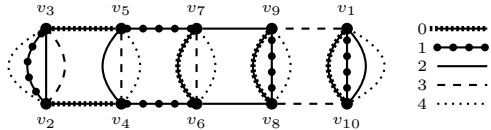

\begin{theorem}\label{theorem:connected-sum-1}
For $1\leq i \leq 2$, let $M_i$ be a connected compact PL $4$-manifold with $h_i$ boundary components. If the bound obtained in Part $(i)$ (resp., Part $(ii)$) of Theorem $\ref{theorem:main1}$ is sharp for both $M_1$ and $M_2$ then the bound is sharp for $M_1\# M_2$ as well.
\end{theorem}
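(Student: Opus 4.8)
The plan is to show that the lower bound for $\mathit{k}(M_1 \# M_2)$ (in either part) is attained by exhibiting a crystallization of $M_1 \# M_2$ whose vertex count realizes the bound exactly, given that $M_1$ and $M_2$ each admit a crystallization realizing their respective bound. The key algebraic fact we need is how the relevant invariants behave under connected sum of $4$-manifolds with boundary: for the connected sum away from the boundary, $\chi(M_1 \# M_2) = \chi(M_1) + \chi(M_2) - 2$ (removing two open $4$-balls and gluing along an $\mathbb{S}^3$, with $\chi(\mathbb{S}^3)=0$, $\chi(\mathbb{D}^4)=1$), the number of boundary components is $h_1 + h_2$, the rank of the fundamental group is $m_1 + m_2$ (Grushko), and $\partial(M_1 \# M_2) = \partial M_1 \sqcup \partial M_2$, so $\mathit{k}(\partial(M_1\# M_2)) \leq \mathit{k}(\partial M_1) + \mathit{k}(\partial M_2)$ (in fact one checks equality is consistent with the sharp cases).

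First I would recall the graph connected sum operation $(\Gamma_1 \#_{v_1 v_2} \Gamma_2, \gamma_1 \#_{v_1 v_2} \gamma_2)$ described in the preliminaries: deleting one vertex from each of $\Gamma_1$ and $\Gamma_2$ and pasting free edges of matching color produces a $5$-colored graph in $\mathbb{G}_4$ on $|V(\Gamma_1)| + |V(\Gamma_2)| - 2$ vertices, and if the chosen vertices $v_1, v_2$ are internal (not boundary vertices) then this graph represents $|\mathcal{K}(\Gamma_1)| \# |\mathcal{K}(\Gamma_2)| = M_1 \# M_2$. One must check that $\Gamma_1 \#_{v_1 v_2} \Gamma_2$ is in fact a \emph{crystallization} of $M_1 \# M_2$, i.e.\ that $\mathcal{K}(\Gamma_1 \#_{v_1 v_2}\Gamma_2)$ has exactly $4(h_1+h_2)+1$ vertices: since removing an internal $4$-simplex and gluing along its boundary $3$-sphere does not create or destroy any vertex labeled $0,1,2,3$ and merges the two $4$-labeled vertices into one, the vertex count of $\mathcal{K}$ drops from $(4h_1+1) + (4h_2+1)$ by exactly one, giving $4(h_1+h_2)+1$ as required. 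Then $\mathit{k}(M_1 \# M_2) \leq \bigl(|V(\Gamma_1)| + |V(\Gamma_2)| - 2\bigr)/2 - 1 = \mathit{k}(M_1) + \mathit{k}(M_2) + 1 - 1 = \mathit{k}(M_1) + \mathit{k}(M_2) + 1$.

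Now I would compute the right-hand side of the bound in Part $(i)$ for $M_1 \# M_2$ and compare. Using $\chi(M_1\#M_2) = \chi(M_1)+\chi(M_2)-2$, $m = m_1+m_2$, $h = h_1+h_2$, the claimed lower bound is
$$3\chi(M_1\#M_2)+7m+7h-10 = \bigl(3\chi(M_1)+7m_1+7h_1-10\bigr) + \bigl(3\chi(M_2)+7m_2+7h_2-10\bigr) + 4.$$
By hypothesis $\mathit{k}(M_i) = 3\chi(M_i)+7m_i+7h_i-10$ for $i=1,2$, so the bound for $M_1\#M_2$ equals $\mathit{k}(M_1)+\mathit{k}(M_2)+4$. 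Meanwhile the general bound in Theorem~\ref{theorem:main1}$(i)$ says $\mathit{k}(M_1\#M_2) \geq \mathit{k}(M_1)+\mathit{k}(M_2)+4$; combined with the upper estimate from the graph connected sum this forces $\mathit{k}(M_1\#M_2) = \mathit{k}(M_1)+\mathit{k}(M_2)+1 \leq \mathit{k}(M_1)+\mathit{k}(M_2)+4$ — wait, these are inconsistent unless the graph connected sum of the \emph{minimal} crystallizations is not in general minimal, so I must be more careful: the correct reading is that the theorem's lower bound is $\mathit{k}(M_1)+\mathit{k}(M_2)+4$ and the construction only shows $\mathit{k}(M_1\#M_2)\le \mathit{k}(M_1)+\mathit{k}(M_2)+1$, which is impossible. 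The resolution — and the point I'd emphasize in the proof — is that in the sharp cases one actually has minimal crystallizations with \emph{more structure}: the genuinely useful observation is that the lower bound of Theorem~\ref{theorem:main1}, when re-examined with $\chi(M_1\#M_2)=\chi(M_1)+\chi(M_2)-2$, reduces additivity to checking that the general bound is \emph{consistent}, and sharpness propagates because equality in Proposition~\ref{prop:rank} and in Lemma~\ref{lemma:interior} for $M_1$ and $M_2$ individually yields equality in the corresponding estimates for $M_1\#M_2$ via Lemma~\ref{lemma:bound}. Concretely, if $(\Gamma_i,\gamma_i)$ realizes the sharp bound for $M_i$, then $g_{ijk}=m_i+h_i$, $g_{ij4}=m_i+2h_i-1$ and $\dot g_{ij4}=h_i-1$ hold with equality in each $\Gamma_i$; one verifies that for $\Gamma_1\#_{v_1v_2}\Gamma_2$ the analogous quantities become $g_{ijk}=m+h$, $g_{ij4}=m+2h-1$, $\dot g_{ij4}=h-1$ (tracking how components merge across the connected-sum sphere), so Lemma~\ref{lemma:rel2pgamma} gives $|V(\Gamma_1\#_{v_1v_2}\Gamma_2)| = 6\chi(M_1\#M_2)+14m+14h-18$, i.e.\ the crystallization realizes the bound. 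The hardest step is precisely this bookkeeping: verifying that the component counts $g_{ijk}$, $g_{ij4}$, $\dot g_{ij4}$ in the graph connected sum are exactly the sums dictated by additivity of $m$, $h$ minus the correction from identifying the two $4$-labeled vertices — i.e.\ that no "extra" components are created or lost — and then invoking Lemma~\ref{lemma:2pand2pbar} (together with $\mathit{k}(\partial(M_1\#M_2))=\mathit{k}(\partial M_1)+\mathit{k}(\partial M_2)$, which requires a parallel sharpness argument for the boundary $3$-manifolds) to handle Part $(ii)$ in the same way.
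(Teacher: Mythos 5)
Your overall strategy --- realize the lower bound by an explicit crystallization of $M_1\# M_2$ built from minimal crystallizations of the summands, using $\chi(M_1\#M_2)=\chi(M_1)+\chi(M_2)-2$, $h=h_1+h_2$ and $m=m_1+m_2$ --- is the same as the paper's, and your arithmetic showing that the bound for $M_1\#M_2$ equals $\mathit{k}(M_1)+\mathit{k}(M_2)+4$ is correct. But the construction you actually offer, the direct graph connected sum $\Gamma^1\#_{uv}\Gamma^2$, does not work, and you notice this yourself: it would give a graph with $|V(\Gamma^1)|+|V(\Gamma^2)|-2$ vertices, strictly below the already-established lower bound of Lemma \ref{lemma:bound}$(i)$, an outright contradiction. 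The contradiction is resolved not by ``more structure in the sharp cases'' but by the fact that $\Gamma^1\#_{uv}\Gamma^2$ is not a crystallization of $M_1\# M_2$ at all. The condition stated in the preliminaries requires, for each color $i$, that the $i$-labelled vertex of $\sigma(u)$ or of $\sigma(v)$ be a non-boundary vertex of the corresponding complex; but in a crystallization of a $4$-manifold with nonempty boundary \emph{every} vertex of $\mathcal{K}(\Gamma^i)$ labelled $0,1,2,3$ is a boundary vertex, so the condition fails for all four of these colors. Moreover the pasting identifies one pair of vertices for \emph{each} of the five colors (not only the $4$-labelled pair, as you assert), so $\mathcal{K}(\Gamma^1\#_{uv}\Gamma^2)$ has $4h_1+4h_2-3$ vertices rather than $4(h_1+h_2)+1$, and boundary vertices of the two complexes get identified with each other. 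Your subsequent bookkeeping of the $g_{ijk}$'s is therefore carried out on the wrong object, and the proof as written is incomplete.

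The missing idea is the paper's interpolation of the $10$-vertex crystallization $(\Gamma,\gamma)$ of $\mathbb{S}^4$ of Figure \ref{fig:1}: one forms $\Gamma^1\#_{uv_1}\Gamma\#_{vv_2}\Gamma^2$ with $v_1,v_2\in V(\Gamma)$. Since $\mathbb{S}^4$ is closed, all vertices of $\mathcal{K}(\Gamma)$ are interior, so each of the two graph connected sums satisfies the representability condition and the result is a crystallization of $M_1\#\mathbb{S}^4\#M_2=M_1\#M_2$ with $|V(\Gamma^1)|+|V(\Gamma^2)|+6$ vertices, which is exactly $6\chi(M_1\#M_2)+14(m_1+m_2)+14(h_1+h_2)-18$; combined with Theorem \ref{theorem:main1}$(i)$ this gives equality. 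Part $(ii)$ is handled the same way, using $\mathit{k}(\partial(M_1\#M_2))=\mathit{k}(\partial M_1)+\mathit{k}(\partial M_2)+1$ (vertex counts of crystallizations of a disjoint union add, so the $-1$ in the definition of gem-complexity contributes a $+1$); your inequality $\mathit{k}(\partial(M_1\#M_2))\leq \mathit{k}(\partial M_1)+\mathit{k}(\partial M_2)$ is off by one and points the wrong way for the argument.
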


\begin{proof}
For $1\leq i \leq 2$, let $m_i$ be the rank of the fundamental groups of the manifolds $M_i$. Then, for $1\leq i \leq 2$,
$$\mathit{k}(M_i) = 3\chi(M_i)+7m_i+7h_i-10.$$
Let $(\Gamma^i,\gamma^i)\in \mathbb{G}_4$ be a crystallization of $M_i$ with $6\chi(M_i)+14m_i+14h_i-18$ vertices. Choose  interior vertices $u\in V(\Gamma^1) $ and $v\in V(\Gamma^2)$.
Let $(\Gamma,\gamma)$ be the crystallization of $\mathbb{S}^4$ constructed as in Figure \ref{fig:1}. Let  $v_1,v_2 \in V(\Gamma)$ be as in Figure \ref{fig:1}. Then the graph connected sum $\Gamma^1\#_{uv_1} \Gamma \#_{vv_2} \Gamma^2$  gives a crystallization of $M_1\#M_2$ with $(6\chi(M_1)+14m_1+14h_1-18)+(6\chi(M_2)+14m_2+14h_2-18)+6=6\chi(M_1\# M_2)+ 14(m_1+m_2)+14(h_1+h_2)-18$ vertices. Therefore, $\mathit{k}(M_1\#M_2) \leq 3\chi(M_1\#M_2)+7(m_1+m_2)+7(h_1+h_2)-10$. On the other hand, by Part (i) of Theorem \ref{theorem:main1}, we have 
$\mathit{k}(M_1\#M_2) \geq 3\chi(M_1\#M_2)+7(m_1+m_2)+7(h_1+h_2)-10$. Therefore, $\mathit{k}(M_1\#M_2) = 3\chi(M_1\#M_2)+7(m_1+m_2)+7(h_1+h_2)-10$.

Now, assume that $\mathit{k}(M_i) = \mathit{k}(\partial M_i)+3\chi(M_i)+4m_i+6h_i-9$, for $1\leq i \leq 2$. Let $(\Gamma^i,\gamma^i)\in \mathbb{G}_4$ be a crystallization of $M_i$ with $6\chi(M_i)+2\mathit{k}(\partial M_i)+8m_i+12h_i-16$ vertices. Since $\partial M_1$ and $\partial M_2$ are disjoint, $\mathit{k}(\partial (M_1\#M_2))= \mathit{k}(\partial M_1 \sqcup \partial M_2))= \mathit{k}(\partial M_1)+\mathit{k}(\partial M_2)+1$. Thus, by the similar approach as above, we get $\mathit{k}(M_1\# M_2) = \mathit{k}(\partial (M_1\#M_2))+3\chi(M_1\#M_2)+4(m_1+m_2)+6(h_1+h_2)-9$.
\end{proof}

\begin{remark}\label{remark:gem-complexity}
{\rm
Let $M$ be a closed connected PL 4-manifold which admits semi-simple crystallization constructed in \cite{bc15}. Then, by removing few appropriate edges of color $4$, it is easy to construct connected compact PL 4-manifolds with spherical or non-spherical boundary, for which the bounds of Theorem \ref{theorem:main1} are sharp. In particular, by removing few edges of color $4$ from the unique 10-vertex crystallization of $\mathbb{S}^3\times \mathbb{S}^1$, we get 10-vertex crystallizations of  $\mathbb{S}^3\times [0,1]$ and $\mathbb{D}^3\times \mathbb{S}^1$ (cf. Figures \ref{fig:2} and \ref{fig:3}). Further, by removing few edges of color $4$ from the standard $16$-vertex crystallization of $\mathbb{RP}^4$, we get a  connected compact PL 4-manifolds whose boundary is $\TPSS$ (cf. Figure \ref{fig:4}). 
Moreover, by removing few edges of color $4$ from the simple crystallizations of $\mathbb{CP}^2$, $\mathbb{S}^2\times \mathbb{S}^2$ and $\mathbb{K}_3$ surface (cf. \cite{bs16}), we get  crystallizations of $\mathbb{CP}^2\# \mathbb{D}^4$, $(\mathbb{S}^2\times \mathbb{S}^2)\# \mathbb{D}^4$ and $\mathbb{K}_3\# \mathbb{D}^4$ respectively. In each of the above cases, the minimality of gem-complexity follows from Theorem \ref{theorem:main1}.

Let $M_1$ and $M_2$ be any two connected compact PL $4$-manifolds with boundary as constructed above. It follows from Theorem  \ref{theorem:connected-sum-1} that the bounds of Theorem $\ref{theorem:main1}$ are sharp for $M_1\# M_2$ as well. Thus our bounds for gem-complexity is sharp for a huge class of connected compact PL $4$-manifolds with boundary - which comprehends all  connected compact PL 4-manifolds with boundary as above, together with their connected sums, possibly by taking copies with reversed orientation, too.
}
\end{remark}

\begin{figure}[H]
\tikzstyle{vert}=[circle, draw, fill=black!100, inner sep=0pt, minimum width=4pt] \tikzstyle{vertex}=[circle,
draw, fill=black!00, inner sep=0pt, minimum width=4pt] \tikzstyle{ver}=[] \tikzstyle{extra}=[circle, draw,
fill=black!50, inner sep=0pt, minimum width=2pt] \tikzstyle{edge} = [draw,thick,-] \centering
\begin{tikzpicture}[scale=0.5]

\begin{scope}[shift={(9,6)}]
\node[ver] (4) at (1,-5){\tiny{$4$}}; 
\node[ver] (3) at (1,-4.5){\tiny{$3$}}; 
\node[ver] (2) at (1,-4){\tiny{$2$}};
\node[ver](1) at (1,-3.5){\tiny{$1$}}; 
\node[ver](0) at (1,-3){\tiny{$0$}}; 

\node[ver] (9) at (3,-5){}; 
\node[ver] (8) at (3,-4.5){}; 
\node[ver](7) at (3,-4){}; 
\node[ver](6) at (3,-3.5){}; 
\node[ver] (5) at (3,-3){};

\path[edge] (0) -- (5);
\draw[line width=2pt, line cap=rectengle, dash pattern=on 1pt off 1]   plot [smooth,tension=1] (1.2,-3) -- (5);
\path[edge] (1) -- (6);
\draw [line width=3pt, line cap=round, dash pattern=on 0pt off 2\pgflinewidth] (1) -- (6);
\path[edge] (2) -- (7);
\path[edge,dashed] (3) -- (8);
\path[edge,dotted] (4) -- (9);
\end{scope}

\begin{scope}
\foreach \x/\y/\z/\w in {0/1/0.5/v_{2},0/3/3.5/v_{3},2/1/0.5/v_{4},2/3/3.5/v_{5},4/1/0.5/v_{6},4/3/3.5/v_{7}, 6/1/0.5/v_{8},6/3/3.5/v_{9}, 8/1/0.5/v_{10},8/3/3.5/v_{1}}
{\node[ver](\w) at (\x,\z){\tiny{$\w$}};
\node[vert] (\w) at (\x,\y){};}

\foreach \x/\y/\z/\w in {v_{2}/v_{3}/-0.5/2, v_{2}/v_{3}/0/2, v_{4}/v_{5}/1.5/2, v_{6}/v_{7}/3.5/2,  v_{8}/v_{9}/5.5/2,  v_{8}/v_{9}/6/2,   v_{10}/v_{1}/7.5/2,  v_{10}/v_{1}/8/2,  v_{10}/v_{1}/8.5/2}
{\draw[edge] plot [smooth,tension=1] coordinates{(\x) (\z,\w) (\y)};
}

\foreach \x/\y in {v_{2}/v_{4}, v_{3}/v_{5}, v_{5}/v_{7}, v_{6}/v_{8},  v_{6}/v_{4},  v_{7}/v_{9}}
{\draw[edge] (\x) -- (\y);
}

\foreach \x/\y/\z/\w in {v_{6}/v_{7}/3.5/2,  v_{8}/v_{9}/5.5/2,  v_{10}/v_{1}/7.5/2}
{\draw[line width=2pt, line cap=rectengle, dash pattern=on 1pt off 1]   plot [smooth,tension=1] coordinates{(\x) (\z,\w) (\y)};
}

\foreach \x/\y in {v_{2}/v_{4},  v_{3}/v_{5}}
{\draw[line width=2pt, line cap=rectengle, dash pattern=on 1pt off 1]  (\x) -- (\y);
}

\foreach \x/\y/\z/\w in {v_{2}/v_{3}/-0.5/2,   v_{8}/v_{9}/6/2,  v_{10}/v_{1}/8/2}
{\draw [line width=3pt, line cap=round, dash pattern=on 0pt off 2\pgflinewidth]  plot [smooth,tension=1] coordinates{(\x) (\z,\w) (\y)};
}

\foreach \x/\y in {v_{6}/v_{4},  v_{7}/v_{5}}
{\draw [line width=3pt, line cap=round, dash pattern=on 0pt off 2\pgflinewidth]  (\x) -- (\y);
}

\foreach \x/\y/\z/\w in {v_{2}/v_{3}/0.5/2, v_{4}/v_{5}/2/2, v_{6}/v_{7}/4/2}
{\draw[edge, dashed] plot [smooth,tension=1] coordinates{(\x) (\z,\w) (\y)};
}

\foreach \x/\y/\z/\w in {v_{6}/v_{7}/4.5/2, v_{4}/v_{5}/2.5/2, v_{8}/v_{9}/6.5/2}
{\draw[edge, dotted] plot [smooth,tension=1] coordinates{(\x) (\z,\w) (\y)};
}

\foreach \x/\y in {v_{1}/v_{9}, v_{8}/v_{10}}
{\draw[edge, dashed] (\x) -- (\y);
}
%\draw[edge, dotted] plot [smooth,tension=1] coordinates{(v_{1}) (9,2) (v_{10})};
%\draw[edge, dotted] plot [smooth,tension=1] coordinates{(v_{2}) (-1,2) (v_{3})};
\end{scope}

\end{tikzpicture}
\caption{A  10-vertex (minimal) crystallization of $\mathbb{S}^3 \times [0,1]$.}\label{fig:2}
\end{figure}
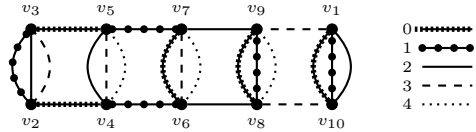

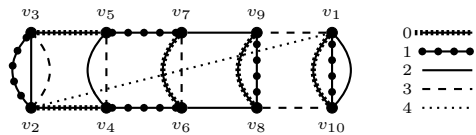
\begin{figure}[H]
\tikzstyle{vert}=[circle, draw, fill=black!100, inner sep=0pt, minimum width=4pt] \tikzstyle{vertex}=[circle,
draw, fill=black!00, inner sep=0pt, minimum width=4pt] \tikzstyle{ver}=[] \tikzstyle{extra}=[circle, draw,
fill=black!50, inner sep=0pt, minimum width=2pt] \tikzstyle{edge} = [draw,thick,-] \centering
\begin{tikzpicture}[scale=0.5]

\begin{scope}[shift={(9,6)}]
\node[ver] (4) at (1,-5){\tiny{$4$}}; 
\node[ver] (3) at (1,-4.5){\tiny{$3$}}; 
\node[ver] (2) at (1,-4){\tiny{$2$}};
\node[ver](1) at (1,-3.5){\tiny{$1$}}; 
\node[ver](0) at (1,-3){\tiny{$0$}}; 

\node[ver] (9) at (3,-5){}; 
\node[ver] (8) at (3,-4.5){}; 
\node[ver](7) at (3,-4){}; 
\node[ver](6) at (3,-3.5){}; 
\node[ver] (5) at (3,-3){};

\path[edge] (0) -- (5);
\draw[line width=2pt, line cap=rectengle, dash pattern=on 1pt off 1]   plot [smooth,tension=1] (1.2,-3) -- (5);
\path[edge] (1) -- (6);
\draw [line width=3pt, line cap=round, dash pattern=on 0pt off 2\pgflinewidth] (1) -- (6);
\path[edge] (2) -- (7);
\path[edge,dashed] (3) -- (8);
\path[edge,dotted] (4) -- (9);
\end{scope}

\begin{scope}
\foreach \x/\y/\z/\w in {0/1/0.5/v_{2},0/3/3.5/v_{3},2/1/0.5/v_{4},2/3/3.5/v_{5},4/1/0.5/v_{6},4/3/3.5/v_{7}, 6/1/0.5/v_{8},6/3/3.5/v_{9}, 8/1/0.5/v_{10},8/3/3.5/v_{1}}
{\node[ver](\w) at (\x,\z){\tiny{$\w$}};
\node[vert] (\w) at (\x,\y){};}

\foreach \x/\y/\z/\w in {v_{2}/v_{3}/-0.5/2, v_{2}/v_{3}/0/2, v_{4}/v_{5}/1.5/2, v_{6}/v_{7}/3.5/2,  v_{8}/v_{9}/5.5/2,  v_{8}/v_{9}/6/2,   v_{10}/v_{1}/7.5/2,  v_{10}/v_{1}/8/2,  v_{10}/v_{1}/8.5/2}
{\draw[edge] plot [smooth,tension=1] coordinates{(\x) (\z,\w) (\y)};
}

\foreach \x/\y in {v_{2}/v_{4}, v_{3}/v_{5}, v_{5}/v_{7}, v_{6}/v_{8},  v_{6}/v_{4},  v_{7}/v_{9}}
{\draw[edge] (\x) -- (\y);
}

\foreach \x/\y/\z/\w in {v_{6}/v_{7}/3.5/2,  v_{8}/v_{9}/5.5/2,  v_{10}/v_{1}/7.5/2}
{\draw[line width=2pt, line cap=rectengle, dash pattern=on 1pt off 1]   plot [smooth,tension=1] coordinates{(\x) (\z,\w) (\y)};
}

\foreach \x/\y in {v_{2}/v_{4},  v_{3}/v_{5}}
{\draw[line width=2pt, line cap=rectengle, dash pattern=on 1pt off 1]  (\x) -- (\y);
}

\foreach \x/\y/\z/\w in {v_{2}/v_{3}/-0.5/2,   v_{8}/v_{9}/6/2,  v_{10}/v_{1}/8/2}
{\draw [line width=3pt, line cap=round, dash pattern=on 0pt off 2\pgflinewidth]  plot [smooth,tension=1] coordinates{(\x) (\z,\w) (\y)};
}

\foreach \x/\y in {v_{6}/v_{4},  v_{7}/v_{5}}
{\draw [line width=3pt, line cap=round, dash pattern=on 0pt off 2\pgflinewidth]  (\x) -- (\y);
}

\foreach \x/\y/\z/\w in {v_{2}/v_{3}/0.5/2, v_{4}/v_{5}/2/2, v_{6}/v_{7}/4/2}
{\draw[edge, dashed] plot [smooth,tension=1] coordinates{(\x) (\z,\w) (\y)};
}

%\foreach \x/\y/\z/\w in {v_{6}/v_{7}/4.5/2, v_{4}/v_{5}/2.5/2, v_{8}/v_{9}/6.5/2}
%{\draw[edge, dotted] plot [smooth,tension=1] coordinates{(\x) (\z,\w) (\y)};
%}

\foreach \x/\y in {v_{1}/v_{9}, v_{8}/v_{10}}
{\draw[edge, dashed] (\x) -- (\y);
}
\draw[edge, dotted] plot [smooth,tension=1] coordinates{(v_{1}) (4,2) (v_{2})};
%\draw[edge, dotted] plot [smooth,tension=1] coordinates{(v_{2}) (-1,2) (v_{3})};
\end{scope}
\end{tikzpicture}
\caption{A  10-vertex (minimal) crystallization of $\mathbb{D}^3 \times \mathbb{S}^1$.}\label{fig:3}
\end{figure}

\begin{figure}[H]
\tikzstyle{vert}=[circle, draw, fill=black!100, inner sep=0pt, minimum width=4pt] \tikzstyle{vertex}=[circle, draw, fill=black!00, inner sep=0pt, minimum width=4pt] \tikzstyle{ver}=[]
\tikzstyle{extra}=[circle, draw, fill=black!50, inner sep=0pt, minimum width=2pt] \tikzstyle{edge} = [draw,thick,-] \tikzstyle{arrow} = [draw,thick,->]
\centering
\begin{tikzpicture}[scale=0.16]
\begin{scope}%[rotate=90]
\foreach \x/\y/\z in {-14/4/v1,-10/-4/v10,-6/4/v3,-2/-4/v12,2/4/v5,6/-4/v14,10/4/v7,14/-4/v16}{
 \node[vert] (\z) at (\x,\y){};}

\foreach \x/\y/\z in {-14/-4/v9,-10/4/v2,-6/-4/v11,-2/4/v4,2/-4/v13,6/4/v6,10/-4/v15,14/4/v8}{
 \node[vert] (\z) at (\x,\y){};}

\foreach \x/\y in {v1/v2,v3/v4,v5/v6,v7/v8,v9/v10,v11/v12,v13/v14,v15/v16}{
\path[edge] (\x) -- (\y);
\draw [line width=2pt, line cap=rectengle, dash pattern=on 1pt off 1]  (\x) -- (\y);}

\foreach \x/\y in {v1/v9,v2/v10,v3/v11,v4/v12,v5/v13,v6/v14,v7/v15,v8/v16}{
\path[edge] (\x) -- (\y);
\draw [line width=3pt, line cap=round, dash pattern=on 0pt off 2\pgflinewidth]  (\x) -- (\y);}

\foreach \x/\y in {v2/v3,v6/v7,v10/v11,v14/v15}{
\path[edge] (\x) -- (\y);}
\draw[edge] plot [smooth,tension=1.5] coordinates{(v1)(-8,6)(v4)};
\draw[edge] plot [smooth,tension=1.5] coordinates{(v9)(-8,-6)(v12)};
\draw[edge] plot [smooth,tension=1.5] coordinates{(v5)(8,6)(v8)};
\draw[edge] plot [smooth,tension=1.5] coordinates{(v13)(8,-6)(v16)};

\foreach \x/\y in {v4/v5,v12/v13}{
\path[edge, dotted] (\x) -- (\y);}

%\draw[edge, dotted] plot [smooth,tension=1.5] coordinates{(v3)(0,6)(v6)};
%\draw[edge, dotted] plot [smooth,tension=1.5] coordinates{(v2)(0,7)(v7)};
%\draw[edge, dotted] plot [smooth,tension=1.5] coordinates{(v1)(0,8)(v8)};
%\draw[edge, dotted] plot [smooth,tension=1.5] coordinates{(v11)(0,-6)(v14)};
\draw[edge, dotted] plot [smooth,tension=1.5] coordinates{(v10)(0,-7)(v15)};
\draw[edge, dotted] plot [smooth,tension=1.5] coordinates{(v9)(0,-8)(v16)};

\foreach \x/\y in {v4/v15,v12/v7,v2/v13,v10/v5}{
\path[edge, dashed] (\x) -- (\y);}
\draw[edge, dashed] plot [smooth,tension=1.5] coordinates{(v1)(-13,-7)(v14)};
\draw[edge, dashed] plot [smooth,tension=1.5] coordinates{(v9)(-13,7)(v6)};
\draw[edge, dashed] plot [smooth,tension=1.5] coordinates{(v8)(13,-7)(v11)};
\draw[edge, dashed] plot [smooth,tension=1.5] coordinates{(v16)(13,7)(v3)};
 \end{scope}

 \begin{scope}[shift={(24,0)}]
\node[ver] (308) at (-3,5){$0$};
\node[ver] (300) at (-3,3){$1$};
\node[ver] (301) at (-3,1){$2$};
\node[ver] (302) at (-3,-1){$3$};
\node[ver] (303) at (-3,-3){$4$};
\node[ver] (309) at (3,5){};
\node[ver] (304) at (3,3){};
\node[ver] (305) at (3,1){};
\node[ver] (306) at (3,-1){};
\node[ver] (307) at (3,-3){};
\path[edge] (300) -- (304);
\path[edge] (308) -- (309);
\draw [line width=2pt, line cap=rectengle, dash pattern=on 1pt off 1]  (308) -- (309);
\draw [line width=3pt, line cap=round, dash pattern=on 0pt off 2\pgflinewidth]  (300) -- (304);
\path[edge] (301) -- (305);
\path[edge, dashed] (302) -- (306);
\path[edge, dotted] (303) -- (307);
\end{scope}

\end{tikzpicture}
\caption{A 16-vertex (minimal) crystallization of a PL 4-manifold with boundary $\TPSS$.}
\label{fig:4}
\end{figure}
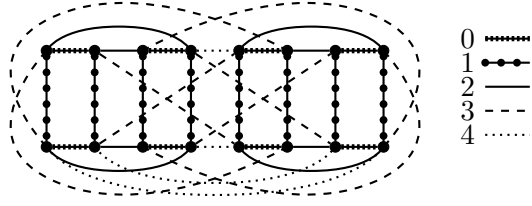

\section{Lower bounds for  regular genus of PL 4-manifolds with boundary}
Recall that, for a connected compact PL $4$-manifold $M$ with boundary, $2M$ denotes the double of the manifold $M$. Let $(\Gamma',\gamma')$ be the 5-colored graph induced from  $(\Gamma,\gamma)$, which represents  $2M$.  For $\{i_0,i_1, \dots, i_k\} \subset \Delta_4$, let $g'_{i_0i_1 \cdots i_k}$ denote the number of connected components of $\Gamma^{'}_{\{i_0,i_1,\dots,i_k\}}$.

\begin{lemma}\label{lemma:rhoepsilon}
Let $(\Gamma,\gamma)\in \mathbb{G}_4$ be a crystallization of a connected compact PL $4$-manifold $M$ with $h$ boundary components. Let $g^{\prime}_{ijk}$ be as above. Then, for any cyclic permutation $\varepsilon=(\varepsilon_0,\dots,\varepsilon_4 =4)$ of the color set $\Delta_4$,\\
$$\rho_\varepsilon (\Gamma)=-1-4h+2\chi(M)+\frac{1}{2}\sum_{i \in \mathbb{Z}_5}g^{\prime}_{\varepsilon_i \varepsilon_{i+2} \varepsilon_{i+4}}-\frac{\partial g_{\varepsilon_0 \varepsilon_3}}{2}.$$
\end{lemma}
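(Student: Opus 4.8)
The plan is to start from the general formula \eqref{relation_chi} for the Euler characteristic $\chi_\varepsilon(\Gamma)$ of the surface into which $\tilde\Gamma$ regularly embeds, together with the formula $\lambda_\varepsilon = \partial g_{\varepsilon_0\varepsilon_{d-1}}$ for the number of holes and the definition $\rho_\varepsilon(\Gamma) = 1 - \chi_\varepsilon(\Gamma)/2 - \lambda_\varepsilon/2$. With $d=4$ this reads
$$\rho_\varepsilon(\Gamma) = 1 - \tfrac12\Big(\sum_{i\in\mathbb{Z}_5}\dot g_{\varepsilon_i\varepsilon_{i+1}} - 3\dot p - 2\bar p\Big) - \tfrac12\,\partial g_{\varepsilon_0\varepsilon_3},$$
so the whole task is to re-express $\sum_i \dot g_{\varepsilon_i\varepsilon_{i+1}}$, $\dot p$, and $\bar p$ in terms of the data $g'_{\varepsilon_i\varepsilon_{i+2}\varepsilon_{i+4}}$, $h$, $\chi(M)$, and $\partial g_{\varepsilon_0\varepsilon_3}$. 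First I would handle the terms with the color $4$ in them: in the cyclic sum $\sum_{i\in\mathbb{Z}_5}\dot g_{\varepsilon_i\varepsilon_{i+1}}$ exactly two summands involve the color $4$ (namely $\dot g_{\varepsilon_3\varepsilon_4}$ and $\dot g_{\varepsilon_4\varepsilon_0}$, since $\varepsilon_4 = 4$), and the remaining three are of the form $\dot g_{ij}$ with $i,j\in\{0,1,2,3\}$; but for a crystallization every $\{i,j\}$-colored subgraph with $i,j\in\{0,1,2,3\}$ is already regular, so $\dot g_{ij} = g_{ij}$ there. For the two terms involving color $4$, Lemma~\ref{lemma:first}$(i)$ gives $\dot g_{\varepsilon_a 4} = g_{\varepsilon_a 4} - \partial g_{\varepsilon_a}$ after the appropriate reindexing (noting $\dot g_{ij4}$ counts something different — I should be careful: the subscripts in Lemma~\ref{lemma:first} are the \emph{omitted} colors, so I want to match $\dot g_{ij}$ for an edge-subgraph, which is $\dot g$ of a $2$-colored graph, directly).

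The cleaner route, which I expect to be the intended one, is to pass immediately to the double $2M$ via Lemma~\ref{lemma:relgprimeg} and Lemma~\ref{lemma:rel2pgamma}-style bookkeeping. Concretely: by Proposition~\ref{prop:preliminaries} applied to the closed $4$-manifold $2M$ and its crystallization $(\Gamma',\gamma')$, one has $2g'_{ijk} = g'_{ij} + g'_{ik} + g'_{jk} - 2p'$ for distinct $i,j,k$, where $2p' = |V(\Gamma')| = 2|V(\Gamma)| = 4p$. Summing a suitable combination of these, and using $\chi(2M) = 2\chi(M)$ together with the relation between $f_1(\mathcal K(\Gamma'))$ and $\sum g'_{ijk}$ and the Dehn–Sommerville identities (exactly as in the proof of Lemma~\ref{lemma:rel2pgamma}), expresses $\sum_{i\in\mathbb{Z}_5} g'_{\varepsilon_i\varepsilon_{i+2}\varepsilon_{i+4}}$ — note $\{\varepsilon_i,\varepsilon_{i+2},\varepsilon_{i+4}\}$ as $i$ runs over $\mathbb{Z}_5$ enumerates all $5$ triples of the form "complement of two cyclically-adjacent colors" — in terms of the corresponding $\sum \dot g_{\varepsilon_i\varepsilon_{i+1}}(\Gamma')$. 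Then Lemma~\ref{lemma:relgprimeg}$(ii)$ translates $g'$ back to $g$ and $\dot g$ on $\Gamma$, and Lemma~\ref{lemma:first} translates those into $\partial g$ and $\bar p$, at which point the $2\bar p$ and $\partial g_{\varepsilon_0\varepsilon_3}$ terms should fall out with the stated coefficients.

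In more detail on the key identity: I would compute $\sum_{i\in\mathbb{Z}_5}\chi_\varepsilon$-type contributions for $\Gamma'$, use that for the \emph{closed} manifold $2M$ the regular-embedding formula has no $\bar p$ and no holes, so $\rho_\varepsilon(\Gamma') = 1 - \tfrac12\big(\sum_{i}\dot g_{\varepsilon_i\varepsilon_{i+1}}(\Gamma') - 3p'\big)$ with $p' = 2p$. Independently, the "genus of the double" should split as a sum of two copies of the $M$-side contribution glued along $\partial$, which is precisely the mechanism producing the $-\partial g_{\varepsilon_0\varepsilon_3}/2$ correction when we halve. Combining the closed-case identity for $\Gamma'$ with the Lemma~\ref{lemma:relgprimeg} dictionary $g'_{ijk} = 2g_{ijk}$ or $g_{ijk}+\dot g_{ijk}$ (depending on whether $k=4$), and with $|V(\Gamma)| = 2p$ and $2p + 2\bar p = 6\chi(M) + 2\sum g_{ijk} - 16h - 6$ from Lemma~\ref{lemma:2pand2pbar}, gives an equation in which everything is expressed via $\sum g'_{\varepsilon_i\varepsilon_{i+2}\varepsilon_{i+4}}$, $h$, $\chi(M)$, $\bar p$, and $\partial g_{\varepsilon_0\varepsilon_3}$; solving for $\rho_\varepsilon(\Gamma)$ yields the claimed formula. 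The main obstacle I anticipate is purely combinatorial: keeping the cyclic indexing straight — matching which of the five triples $\{\varepsilon_i,\varepsilon_{i+2},\varepsilon_{i+4}\}$ contains the color $4$, and hence which summands pick up a $\dot g$ versus a full $g$ and which contribute the boundary terms $\partial g_{\varepsilon_0\varepsilon_3}$ and $\bar p$ — and verifying that the constants $-1$, $-4h$, $2\chi(M)$ assemble correctly after substituting $\chi(2M) = 2\chi(M)$ and the Dehn–Sommerville-derived expression for $\sum g'_{ijk}$. No deep idea is needed beyond these substitutions; the care is entirely in the arithmetic of the color indices and the boundary corrections.
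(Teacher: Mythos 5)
Your plan follows the paper's proof essentially step for step: start from Eq.~\eqref{relation_chi} and $\rho_\varepsilon(\Gamma)=1-\chi_\varepsilon(\Gamma)/2-\lambda_\varepsilon/2$, pass to the double $2M$ and its induced graph $(\Gamma',\gamma')$, apply Proposition~\ref{prop:preliminaries} to $\Gamma'$, translate between $g'$, $g$, $\dot g$, $\partial g$ and $\overline p$ via Lemmas~\ref{lemma:relgprimeg} and~\ref{lemma:first}, and eliminate $p$ via Lemma~\ref{lemma:rel2pgamma}. The one genuine gap is the step you describe as ``summing a suitable combination'' of the ten relations $g'_{ij}+g'_{ik}+g'_{jk}=2g'_{ijk}+2p$: that is where all the content of the lemma sits, and you never exhibit the combination. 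The paper inverts the corresponding $10\times 10$ system explicitly and then invokes the coefficient identities $\sum_{i\in\mathbb{Z}_5}c^{jk}_{\varepsilon_i\varepsilon_{i+1}\varepsilon_{i+2}}=2/3$ and $\sum_{i\in\mathbb{Z}_5}c^{jk}_{\varepsilon_i\varepsilon_{i+2}\varepsilon_{i+4}}=-1/3$ (quoted from \cite{bb18}); only with these two numbers, combined with $\sum_{0\le r<s<t\le 4}g'_{rst}=\sum_i g'_{\varepsilon_i\varepsilon_{i+1}\varepsilon_{i+2}}+\sum_i g'_{\varepsilon_i\varepsilon_{i+2}\varepsilon_{i+4}}$ and with Lemma~\ref{lemma:rel2pgamma}, do the ``adjacent-type'' triples cancel so as to leave exactly the coefficient $\tfrac12$ on $\sum_i g'_{\varepsilon_i\varepsilon_{i+2}\varepsilon_{i+4}}$ and the constants $-1-4h+2\chi(M)$. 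Until you produce or cite those coefficient sums, the formula is asserted rather than derived.

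Three smaller slips to repair when writing this out. First, the triples $\{\varepsilon_i,\varepsilon_{i+2},\varepsilon_{i+4}\}$ are the complements of the \emph{distance-two} pairs $\{\varepsilon_{i+1},\varepsilon_{i+3}\}$, not of cyclically adjacent pairs; interchanging the two families of five triples would put the wrong sum into the final formula. Second, Proposition~\ref{prop:preliminaries} applied to $\Gamma'$, which has $4p$ vertices, subtracts $\#V(\Gamma')/2=2p$, not $2p'=4p$ as your normalization would give. Third, in the notation $g_{i_1\cdots i_k}$ the subscripts are the colors \emph{present} in the subgraph $\Gamma_{\{i_1,\dots,i_k\}}$, not the omitted ones (the omitted colors label the dual simplex that the component represents). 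None of these affects the soundness of the strategy, which is the paper's own; but the decisive linear-algebra identity must actually be carried out.
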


\begin{proof}
Let $g^{\prime}_{ij}$ be the number of components of $\Gamma'_{\{i,j\}}$. Let $|V(\Gamma)|=2p$ and $|V(\partial\Gamma)|=2\overline{p}$. Then $|V(\Gamma^{\prime})|=4p$. By the concept of regular genus of crystallization for manifold with boundary, for $\varepsilon=(\varepsilon_0,\dots,\varepsilon_4=4)$,\\
\begin{equation}
\chi_\varepsilon(\Gamma)=\sum_{i \in \mathcal{Z}_5} \dot{g}_{\varepsilon_i \varepsilon_{i+1}} -3(p-\overline{p})-2\overline{p}\label{eqn:chigamma}\\
=\sum_{i \in \mathcal{Z}_5} \dot{g}_{\varepsilon_i \varepsilon_{i+1}}-3p+\overline{p}.
\end{equation} 

Since $(\Gamma',\gamma')$ represents the closed connected PL 4-manifold $2M$, by Proposition \ref{prop:preliminaries} we have
$g^{\prime}_{ij}+g^{\prime}_{ik}+g^{\prime}_{jk}=2g^{\prime}_{ijk}+2p$ for distinct $i,j,k \in \Delta_4$.
This gives ten linear equations which can be written in matrix form as 
$$AX=B,$$
 where 
 {\small
$$A=\begin{bmatrix}
1&1&0&0&1&0&0&0&0&0\\
1&0&1&0&0&1&0&0&0&0\\
1&0&0&1&0&0&1&0&0&0\\
0&1&1&0&0&0&0&1&0&0\\
0&1&0&1&0&0&0&0&1&0\\
0&0&1&1&0&0&0&0&0&1\\
0&0&0&0&1&1&0&1&0&0\\
0&0&0&0&1&0&1&0&1&0\\
0&0&0&0&0&1&1&0&0&1\\
0&0&0&0&0&0&0&1&1&1\\
\end{bmatrix}, 
X=\begin{bmatrix}
g^{\prime}_{01}\\
g^{\prime}_{02}\\
g^{\prime}_{03}\\
g^{\prime}_{04}\\
g^{\prime}_{12}\\
g^{\prime}_{13}\\
g^{\prime}_{14}\\
g^{\prime}_{23}\\
g^{\prime}_{24}\\
g^{\prime}_{34}\\
\end{bmatrix}
 \mbox{and } B=\begin{bmatrix}
2p+2g^{\prime}_{012}\\
2p+2g^{\prime}_{013}\\
2p+2g^{\prime}_{014}\\
2p+2g^{\prime}_{023}\\
2p+2g^{\prime}_{024}\\
2p+2g^{\prime}_{034}\\
2p+2g^{\prime}_{123}\\
2p+2g^{\prime}_{124}\\
2p+2g^{\prime}_{134}\\
2p+2g^{\prime}_{234}\\
\end{bmatrix}.$$}
Since $|A|\neq 0$,
$$X=A^{-1}B,$$
where
{\small $$A^{-1}=\begin{bmatrix}
1/3&1/3&1/3&-1/6&-1/6&-1/6&-1/6&-1/6&-1/6&1/3\\
1/3&-1/6&-1/6&1/3&1/3&-1/6&-1/6&-1/6&1/3&-1/6\\
-1/6&1/3&-1/6&1/3&-1/6&1/3&-1/6&1/3&-1/6&-1/6\\
-1/6&-1/6&1/3&-1/6&1/3&1/3&1/3&-1/6&-1/6&-1/6\\
1/3&-1/6&-1/6&-1/6&-1/6&1/3&1/3&1/3&-1/6&-1/6\\
-1/6&1/3&-1/6&-1/6&1/3&-1/6&1/3&-1/6&1/3&-1/6\\
-1/6&-1/6&1/3&1/3&-1/6&-1/6&-1/6&1/3&1/3&-1/6\\
-1/6&-1/6&1/3&1/3&-1/6&-1/6&1/3&-1/6&-1/6&1/3\\
-1/6&1/3&-1/6&-1/6&1/3&-1/6&-1/6&1/3&-1/6&1/3\\
1/3&-1/6&-1/6&-1/6&-1/6&1/3&-1/6&-1/6&1/3&1/3\\
\end{bmatrix}.
$$}
Consider $$B=M+L,$$
where

$$M=\begin{bmatrix}
2p\\
2p\\
2p\\
2p\\
2p\\
2p\\
2p\\
2p\\
2p\\
2p\\
\end{bmatrix}
\mbox{ and } L=
\begin{bmatrix}
2g^{\prime}_{012}\\
2g^{\prime}_{013}\\
2g^{\prime}_{014}\\
2g^{\prime}_{023}\\
2g^{\prime}_{024}\\
2g^{\prime}_{034}\\
2g^{\prime}_{123}\\
2g^{\prime}_{124}\\
2g^{\prime}_{134}\\
2g^{\prime}_{234}\\
\end{bmatrix}.
$$
Thus, $$X=A^{-1}M+A^{-1}L.$$
Therefore, $g^{\prime}_{ij}=\frac{2p}{3}+2\sum_{0 \leq r<s<t \leq 4} c^{ij}_{rst} g^{\prime}_{rst}$, where $c^{ij}_{rst}$ is the member of $A^{-1}$ corresponding to ${ij}$-row and $rst$-column of $A^{-1}$.
 Now, for given permutation  $\varepsilon=(\varepsilon_0,\dots,\varepsilon_4 =4)$ of the color set $\Delta_4$, we get
$$\sum_{i \in \mathbb{Z}_5} g^{\prime}_{\varepsilon_i \varepsilon_{i+1}}=\frac{10p}{3}+2\sum_{0 \leq r<s<t \leq 4}( \sum_{i \in \mathbb{Z}_5}c^{\varepsilon_i \varepsilon_{i+1}}_{rst}) g^{\prime}_{rst}.$$
Now, it follows from Lemmas \ref{lemma:first} and  \ref{lemma:relgprimeg} that
$$\sum_{i \in \mathbb{Z}_5} \dot g_{\varepsilon_i \varepsilon_{i+1}}=\frac{5p}{3}-\overline{p}+\sum_{0 \leq r<s<t \leq 4}( \sum_{i \in \mathbb{Z}_5}c^{\varepsilon_i \varepsilon_{i+1}}_{rst}) g^{\prime}_{rst}.$$
By substituting the value of $\sum_{i \in \mathbb{Z}_5} \dot g_{\varepsilon_i \varepsilon_{i+1}}$ in Eq. \eqref{eqn:chigamma}, we get
$$\chi_\varepsilon(\Gamma)=\frac{-4p}{3}+\sum_{0 \leq r<s<t \leq 4}( \sum_{i \in \mathbb{Z}_5}c^{\varepsilon_i \varepsilon_{i+1}}_{rst}) g^{\prime}_{rst}.$$
Now, 
\begin{align*}
\rho_\varepsilon(\Gamma)&= 1-\frac{\chi_{\varepsilon}(\Gamma)}{2}-\frac{\lambda_{\varepsilon}}{2}\\
&=1+\frac{2p}{3}-\frac{1}{2}\sum_{0 \leq r<s<t \leq 4}( \sum_{i \in \mathbb{Z}_5}c^{\varepsilon_i \varepsilon_{i+1}}_{rst}) g^{\prime}_{rst} - \frac{\partial g_{\varepsilon_0 \varepsilon_3}}{2}.
\end{align*}
Further from Lemma \ref{lemma:rel2pgamma} we know that
$$2p=6\chi(M)+\sum_{0\leq r<s<t \leq 4} g^{\prime}_{rst}-12h-6$$
Therefore,
$$
\rho_\varepsilon(\Gamma)=-1-4h+2\chi(M)+\frac{1}{3}\sum_{0\leq r<s<t \leq 4} g^{\prime}_{rst}  -\frac{1}{2} \sum_{0 \leq r<s<t \leq 4}( \sum_{i \in \mathbb{Z}_5}c^{\varepsilon_i \varepsilon_{i+1}}_{rst}) g^{\prime}_{rst}-\frac{\partial g_{\varepsilon_0 \varepsilon_3}}{2}.$$

From \cite[page 107]{bb18}, we know that, for a given permutation $\varepsilon=(\varepsilon_0,\dots,\varepsilon_4 =4)$ of $\Delta_4$ and for any $0\leq j<k\leq 4$,
$$\sum_{i \in \mathbb{Z}_5}c^{jk}_{\varepsilon_i \varepsilon_{i+1}\varepsilon_{i+2}}=\frac{2}{3} \mbox{ and } \sum_{i \in \mathbb{Z}_5}c^{jk}_{\varepsilon_i \varepsilon_{i+2}\varepsilon_{i+4}}=-\frac{1}{3}. $$

Thus,$$ \sum_{0 \leq r<s<t \leq 4}( \sum_{i \in \mathbb{Z}_5}c^{\varepsilon_i \varepsilon_{i+1}}_{rst}) g^{\prime}_{rst}=\frac{2}{3} \sum_{i \in \mathbb{Z}_5} g^{\prime}_{\varepsilon_i \varepsilon_{i+1} \varepsilon_{i+2}}-\frac{1}{3} \sum_{i \in \mathbb{Z}_5} g^{\prime}_{\varepsilon_i \varepsilon_{i+2} \varepsilon_{i+4}}.$$
On the other hand,$$
\sum_{0\leq r<s<t \leq 4}g^{\prime}_{rst}= \sum_{i \in \mathbb{Z}_5} g^{\prime}_{\varepsilon_i \varepsilon_{i+1} \varepsilon_{i+2}}+\sum_{i \in \mathbb{Z}_5} g^{\prime}_{\varepsilon_i \varepsilon_{i+2} \varepsilon_{i+4}}.$$
Therefore, 
$$\rho_\varepsilon=-1-4h+2\chi(M)+\frac{1}{2}\sum_{i \in \mathbb{Z}_5}g^{\prime}_{\varepsilon_i \varepsilon_{i+2} \varepsilon_{i+4}}-\frac{\partial g_{\varepsilon_0 \varepsilon_3}}{2}.$$
This proves the result.
\end{proof}

\begin{lemma}\label{lemma:main}
Let $(\Gamma,\gamma)\in \mathbb{G}_4$ be a crystallization of a connected compact PL $4$-manifold $M$ with $h$ boundary components.  Then, for any cyclic permutation $\varepsilon=(\varepsilon_0,\dots,\varepsilon_4 =4)$ of the color set $\Delta_4$,\\
$$\rho_\varepsilon (\Gamma)=-1-4h+2\chi(M)+g_{\varepsilon_0 \varepsilon_1 \varepsilon_3}+g_{\varepsilon_0 \varepsilon_2 \varepsilon_3}+g_{\varepsilon_1 \varepsilon_3 4}+\dot g_{\varepsilon_0 \varepsilon_2 4}+\dot g_{\varepsilon_1 \varepsilon_2 4}.$$
\end{lemma}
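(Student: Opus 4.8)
The plan is to start from Lemma~\ref{lemma:rhoepsilon}, which already expresses $\rho_\varepsilon(\Gamma)$ as
$$\rho_\varepsilon(\Gamma)=-1-4h+2\chi(M)+\tfrac12\sum_{i\in\mathbb{Z}_5}g^{\prime}_{\varepsilon_i\varepsilon_{i+2}\varepsilon_{i+4}}-\tfrac{\partial g_{\varepsilon_0\varepsilon_3}}{2},$$
and to rewrite the ``$\tfrac12\sum g^{\prime}$'' term and the ``$\partial g$'' term in terms of the quantities $g$ and $\dot g$ of $\Gamma$ itself, using Lemmas~\ref{lemma:relgprimeg} and~\ref{lemma:first}. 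The index set $\{\varepsilon_i\varepsilon_{i+2}\varepsilon_{i+4}: i\in\mathbb{Z}_5\}$ is, since the permutation has period $5$ and $\gcd(2,5)=1$, exactly the set of all five ``spread-out'' triples of colors; I would first identify which of these contain the color $4=\varepsilon_4$ and which do not. Exactly one of $\varepsilon_i,\varepsilon_{i+2},\varepsilon_{i+4}$ equals $\varepsilon_4$ for exactly three values of $i$ (namely $i=4,2,0$, giving triples $\{\varepsilon_4\varepsilon_1\varepsilon_3\}$, $\{\varepsilon_2\varepsilon_4\varepsilon_1\}$, $\{\varepsilon_0\varepsilon_2\varepsilon_4\}$), while for the other two values of $i$ ($i=1,3$) the triple lies inside $\{0,1,2,3\}$ (namely $\{\varepsilon_1\varepsilon_3\varepsilon_0\}$ and $\{\varepsilon_3\varepsilon_0\varepsilon_2\}$). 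So the sum splits as a contribution from the two ``interior-color'' triples $g^{\prime}_{\varepsilon_0\varepsilon_1\varepsilon_3}$ and $g^{\prime}_{\varepsilon_0\varepsilon_2\varepsilon_3}$, plus a contribution from the three triples each containing the color $4$.

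Next I would apply Lemma~\ref{lemma:relgprimeg}$(i)$ termwise: for a triple $\{i,j,k\}\subset\{0,1,2,3\}$ we have $g^{\prime}_{ijk}=2g_{ijk}$, and for a triple $\{i,j,4\}$ we have $g^{\prime}_{ij4}=g_{ij4}+\dot g_{ij4}$. Hence
$$\tfrac12\sum_{i\in\mathbb{Z}_5}g^{\prime}_{\varepsilon_i\varepsilon_{i+2}\varepsilon_{i+4}}
=g_{\varepsilon_0\varepsilon_1\varepsilon_3}+g_{\varepsilon_0\varepsilon_2\varepsilon_3}
+\tfrac12\big(g_{\varepsilon_1\varepsilon_3 4}+\dot g_{\varepsilon_1\varepsilon_3 4}
+g_{\varepsilon_0\varepsilon_2 4}+\dot g_{\varepsilon_0\varepsilon_2 4}
+g_{\varepsilon_1\varepsilon_2 4}+\dot g_{\varepsilon_1\varepsilon_2 4}\big),$$
where I have used that $\{\varepsilon_2,\varepsilon_4,\varepsilon_1\}=\{\varepsilon_1,\varepsilon_2,4\}$ as color sets. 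For the $\partial g_{\varepsilon_0\varepsilon_3}$ term, Lemma~\ref{lemma:first}$(i)$ gives $\partial g_{\varepsilon_0\varepsilon_3}=g_{\varepsilon_0\varepsilon_3 4}-\dot g_{\varepsilon_0\varepsilon_3 4}$; but $\{\varepsilon_0,\varepsilon_3,4\}=\{\varepsilon_1,\varepsilon_2,4\}$ since $\{\varepsilon_0,\varepsilon_1,\varepsilon_2,\varepsilon_3\}=\{0,1,2,3\}$, so $\partial g_{\varepsilon_0\varepsilon_3}=g_{\varepsilon_1\varepsilon_2 4}-\dot g_{\varepsilon_1\varepsilon_2 4}$. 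Substituting both into the Lemma~\ref{lemma:rhoepsilon} formula, the $\tfrac12 g_{\varepsilon_1\varepsilon_2 4}$ from the first sum cancels against the $\tfrac12 g_{\varepsilon_1\varepsilon_2 4}$ in $-\tfrac12\partial g_{\varepsilon_0\varepsilon_3}$, and the $-(-\tfrac12\dot g_{\varepsilon_1\varepsilon_2 4})$ adds to the $\tfrac12\dot g_{\varepsilon_1\varepsilon_2 4}$ to produce a full $\dot g_{\varepsilon_1\varepsilon_2 4}$.

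So the remaining task is purely the bookkeeping of collecting $\tfrac12(g_{\varepsilon_1\varepsilon_3 4}+\dot g_{\varepsilon_1\varepsilon_3 4})$ into a single $g_{\varepsilon_1\varepsilon_3 4}$ using Lemma~\ref{lemma:first}$(i)$ once more (noting $g_{\varepsilon_1\varepsilon_3 4}=\dot g_{\varepsilon_1\varepsilon_3 4}+\partial g_{\varepsilon_1\varepsilon_3}$ is not quite what is wanted; rather I want to absorb the extra half via a different identity). Here is the cleaner route: keep $\tfrac12(g_{\varepsilon_1\varepsilon_3 4}+\dot g_{\varepsilon_1\varepsilon_3 4})$ and $\tfrac12(g_{\varepsilon_0\varepsilon_2 4}+\dot g_{\varepsilon_0\varepsilon_2 4})$ as they are for the moment; the target has $g_{\varepsilon_1\varepsilon_3 4}+\dot g_{\varepsilon_0\varepsilon_2 4}+\dot g_{\varepsilon_1\varepsilon_2 4}$, and I already have $\dot g_{\varepsilon_1\varepsilon_2 4}$ in full. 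The discrepancy between $\tfrac12(g_{\varepsilon_1\varepsilon_3 4}+\dot g_{\varepsilon_1\varepsilon_3 4}+g_{\varepsilon_0\varepsilon_2 4}+\dot g_{\varepsilon_0\varepsilon_2 4})$ and $g_{\varepsilon_1\varepsilon_3 4}+\dot g_{\varepsilon_0\varepsilon_2 4}$ is $\tfrac12(\dot g_{\varepsilon_1\varepsilon_3 4}-g_{\varepsilon_1\varepsilon_3 4})+\tfrac12(g_{\varepsilon_0\varepsilon_2 4}-\dot g_{\varepsilon_0\varepsilon_2 4})=-\tfrac12\partial g_{\varepsilon_1\varepsilon_3}+\tfrac12\partial g_{\varepsilon_0\varepsilon_2}$. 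Since $\{\varepsilon_1,\varepsilon_3\}$ and $\{\varepsilon_0,\varepsilon_2\}$ are the two complementary $2$-subsets of $\{0,1,2,3\}$, and by the crystallization property of each boundary component (Proposition~\ref{prop:3mfd}(i)) one has $\partial g_{ij}=\partial g_{kl}$ whenever $\{i,j,k,l\}=\{0,1,2,3\}$ (summed over the $h$ boundary components), these two boundary terms are equal and cancel. This last cancellation—realizing that the leftover half-integer pieces are forced to vanish by the closed $3$-manifold relation $\partial^q g_{ij}=\partial^q g_{kl}$ applied on the boundary—is the only genuinely non-mechanical point, and I would flag it explicitly; everything else is substitution. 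Assembling, $\rho_\varepsilon(\Gamma)=-1-4h+2\chi(M)+g_{\varepsilon_0\varepsilon_1\varepsilon_3}+g_{\varepsilon_0\varepsilon_2\varepsilon_3}+g_{\varepsilon_1\varepsilon_3 4}+\dot g_{\varepsilon_0\varepsilon_2 4}+\dot g_{\varepsilon_1\varepsilon_2 4}$, as claimed.
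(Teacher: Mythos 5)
Your overall route is exactly the paper's: start from Lemma~\ref{lemma:rhoepsilon}, split the five triples $\{\varepsilon_i,\varepsilon_{i+2},\varepsilon_{i+4}\}$ into the two lying inside $\{0,1,2,3\}$ and the three containing the color $4$, convert the $g'$-quantities into $g$ and $\dot g$ via Lemmas~\ref{lemma:relgprimeg} and~\ref{lemma:first}, and eliminate the leftover half-integer boundary terms using the complementary-pair identity coming from Proposition~\ref{prop:3mfd}$(i)$ on each boundary component. The identification of the five triples and the final bookkeeping are correct, and the formula you obtain is the stated one.

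One step, however, is justified by a false claim. You assert that $\{\varepsilon_0,\varepsilon_3,4\}=\{\varepsilon_1,\varepsilon_2,4\}$ ``since $\{\varepsilon_0,\varepsilon_1,\varepsilon_2,\varepsilon_3\}=\{0,1,2,3\}$''. This is not true: $\{\varepsilon_0,\varepsilon_3\}$ and $\{\varepsilon_1,\varepsilon_2\}$ are the two \emph{complementary} (hence disjoint) $2$-subsets of $\{0,1,2,3\}$, so in general $g_{\varepsilon_0\varepsilon_3 4}\neq g_{\varepsilon_1\varepsilon_2 4}$ and $\dot g_{\varepsilon_0\varepsilon_3 4}\neq \dot g_{\varepsilon_1\varepsilon_2 4}$. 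The identity you extract from it, namely $\partial g_{\varepsilon_0\varepsilon_3}=g_{\varepsilon_1\varepsilon_2 4}-\dot g_{\varepsilon_1\varepsilon_2 4}$, is nonetheless correct, but for a different reason: each component of $\partial\Gamma$ is a crystallization of a closed $3$-manifold, so Proposition~\ref{prop:3mfd}$(i)$ (summed over the $h$ components) gives $\partial g_{\varepsilon_0\varepsilon_3}=\partial g_{\varepsilon_1\varepsilon_2}$, and then Lemma~\ref{lemma:first}$(i)$ applied to the pair $\{\varepsilon_1,\varepsilon_2\}$ gives $\partial g_{\varepsilon_1\varepsilon_2}=g_{\varepsilon_1\varepsilon_2 4}-\dot g_{\varepsilon_1\varepsilon_2 4}$. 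This is precisely the complementary-pair argument you invoke, correctly, two sentences later to cancel $\partial g_{\varepsilon_1\varepsilon_3}$ against $\partial g_{\varepsilon_0\varepsilon_2}$; once the same justification is supplied at the earlier point, your proof is complete and coincides in substance with the paper's.
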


\begin{proof}
From Lemma \ref{lemma:rhoepsilon}, we have
\begin{align*}
\rho_\varepsilon(\Gamma)=&-1-4h+2\chi(M)+\frac{1}{2}\sum_{i \in \mathbb{Z}_5}g^{\prime}_{\varepsilon_i \varepsilon_{i+2} \varepsilon_{i+4}}-\frac{\partial g_{\varepsilon_0 \varepsilon_3}}{2}\\
=&-1-4h+2\chi(M)+ g_{\varepsilon_0 \varepsilon_1 \varepsilon_3}+g_{\varepsilon_0 \varepsilon_2 \varepsilon_3}+g_{\varepsilon_0 \varepsilon_2 4}-\frac{1}{2}\partial g_{\varepsilon_0 \varepsilon_2}+g_{\varepsilon_1 \varepsilon_2 4}-\frac{1}{2}\partial g_{\varepsilon_1 \varepsilon_2}\\
&+g_{\varepsilon_1 \varepsilon_3 4}-\frac{1}{2}\partial g_{\varepsilon_1 \varepsilon_3}-\frac{\partial g_{\varepsilon_0 \varepsilon_3}}{2} \mbox{ (by applying Lemmas \ref{lemma:relgprimeg} and \ref{lemma:first}) } \\
=&-1-4h+2\chi(M)+\sum_{i \in \mathbb{Z}_5}g_{\varepsilon_i \varepsilon_{i+2} \varepsilon_{i+4}}-\partial g_{\varepsilon_0 \varepsilon_2}-\partial g_{\varepsilon_1 \varepsilon_2}  \mbox{ (by Proposition \ref{prop:3mfd})} \\
=&-1-4h+2\chi(M)+g_{\varepsilon_0 \varepsilon_1 \varepsilon_3}+g_{\varepsilon_0 \varepsilon_2 \varepsilon_3}+g_{\varepsilon_1 \varepsilon_3 4}+\dot g_{\varepsilon_0 \varepsilon_2 4}+\dot g_{\varepsilon_1 \varepsilon_2 4}\mbox{ (by  Lemma \ref{lemma:first}) }.
\end{align*}
This proves the result.
\end{proof}

Let $(\Gamma,\gamma)$ be a crystallization of a connected compact PL $4$-manifold $M$ with $h$ boundary components.  Thus, $\mathcal{K}(\Gamma)$ has $(1+4h)$ vertices. Let  $(\Gamma',\gamma')$ be the 5-colored graph induced from  $(\Gamma,\gamma)$ which represents $2M$.  Then,  $\mathcal{K}(\Gamma^{\prime})$ has $(2+4h)$ number of vertices. In particular, for each $c\in \{0,1,2,3\}$ there are exactly $h$ vertices labeled by the color $c$  and there are exactly 2 vertices labeled by the color $4$ in $\mathcal{K}(\Gamma^{\prime})$. To get a crystallization $(\bar{\Gamma},\bar \gamma)$  of the closed connected PL 4-manifold $2M$ from  $(\Gamma',\gamma')$ we need to remove $(h-1)$ number of $1$-dipole of each of colors $0,1,2,3$ (i.e., contract $(h-1)$ number of edges of color $c$ which connect two different components of $\Gamma_{\hat c}$ for $c\in \{0,1,2,3\}$) and one $1$-dipole of color 4 (for details on removing $1$-dipole see\cite{fgg86}). Then, $(\bar{\Gamma},\bar \gamma)$ is said to be the induced crystallization of the closed connected PL 4-manifold $2M$ from $(\Gamma,\gamma)$. For $\{i_0,i_1, \dots, i_k\} \subset \Delta_4$, let $\bar g_{i_0i_1 \cdots i_k}$ denote the number of connected components of $\bar \Gamma_{\{i_0,i_1,\dots,i_k\}}$. It is easy to see that $\bar{g}_{ijk}=g^{\prime}_{ijk}-h$ and $\bar{g}_{ij4}=g^{\prime}_{ij4}-2(h-1)$, for $i,j,k \in \{0,1,2,3\}$.

\begin{theorem}\label{theorem:main2}
Let $M$ be a connected compact PL $4$-manifold with $h$ boundary components. Let $m$ and $\overline{m}$ be the rank of the fundamental groups of the manifolds $M$ and $2M$ respectively, where $2M$ is the double of the manifold $M$. Then,
\begin{enumerate}[$(i)$] 
\item $\mathcal{G}(M)\geq 2\chi(M)+2\overline{m}-2$,
\item $\mathcal{G}(M)\geq 2\chi(M)+3m+2h-4$,
\item $\mathcal{G}(M)\geq \mathcal{G}(\partial M)+2\chi(M)+2m+2h-4$.
\end{enumerate} 
\end{theorem}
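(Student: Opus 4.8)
The plan is to derive all three inequalities from the master formula in Lemma \ref{lemma:main}, which expresses $\rho_\varepsilon(\Gamma)$ for an arbitrary crystallization $(\Gamma,\gamma)$ of $M$ and an arbitrary cyclic permutation $\varepsilon$ in terms of component counts of $\Gamma$:
$$\rho_\varepsilon(\Gamma)=-1-4h+2\chi(M)+g_{\varepsilon_0 \varepsilon_1 \varepsilon_3}+g_{\varepsilon_0 \varepsilon_2 \varepsilon_3}+g_{\varepsilon_1 \varepsilon_3 4}+\dot g_{\varepsilon_0 \varepsilon_2 4}+\dot g_{\varepsilon_1 \varepsilon_2 4}.$$
Since $\mathcal G(M)=\min_\Gamma\min_\varepsilon \rho_\varepsilon(\Gamma)$, it suffices to bound the five variable terms from below \emph{uniformly} over $\Gamma$ and $\varepsilon$, using the arithmetical constraints already available: Proposition \ref{prop:rank} (which gives $g_{ijk}\geq m+h$ and $g_{ij4}\geq m+2h-1$ for $i,j,k\in\{0,1,2,3\}$, exactly as extracted in the proof of Lemma \ref{lemma:bound}), Lemma \ref{lemma:interior} (which gives $\dot g_{ab4}\geq h-1$), and Lemma \ref{lemma:first} ($g_{ij4}=\dot g_{ij4}+\partial g_{ij}$).

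For part $(ii)$: in the formula the two colour-$\{0,1,2,3\}$ triple terms $g_{\varepsilon_0\varepsilon_1\varepsilon_3}$ and $g_{\varepsilon_0\varepsilon_2\varepsilon_3}$ each satisfy $\geq m+h$; the term $g_{\varepsilon_1\varepsilon_3 4}$ satisfies $\geq m+2h-1$; and the two interior terms $\dot g_{\varepsilon_0\varepsilon_2 4}$, $\dot g_{\varepsilon_1\varepsilon_2 4}$ each satisfy $\geq h-1$. Adding these up,
$$\rho_\varepsilon(\Gamma)\geq -1-4h+2\chi(M)+2(m+h)+(m+2h-1)+2(h-1)=2\chi(M)+3m+2h-4,$$
and since this holds for every $\varepsilon$ and every crystallization, $\mathcal G(M)\geq 2\chi(M)+3m+2h-4$. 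Part $(i)$ is proved the same way but applied to the induced crystallization $(\bar\Gamma,\bar\gamma)$ of the \emph{closed} $4$-manifold $2M$ defined just before the theorem: one uses the known closed-case lower bound (from \cite{bc15}, via the analogue of Lemma \ref{lemma:main} with $h$ set appropriately to the closed situation, or directly the inequalities $\bar g_{ijk}\geq\bar m + 1$ for the closed manifold $2M$ together with $\bar g_{ijk}=g'_{ijk}-h$, $\bar g_{ij4}=g'_{ij4}-2(h-1)$) and then compares $\rho(\bar\Gamma)$ with $\rho(\Gamma)$, noting $\chi(2M)=2\chi(M)$; the bookkeeping of how many dipoles of each colour were removed is exactly what the paragraph before the theorem set up, so this reduces to arithmetic.

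For part $(iii)$: here I keep the term $g_{\varepsilon_1\varepsilon_3 4}$ in the form $\dot g_{\varepsilon_1\varepsilon_3 4}+\partial g_{\varepsilon_1\varepsilon_3}$ via Lemma \ref{lemma:first} rather than bounding it by $m+2h-1$. The summand $\partial g_{\varepsilon_1\varepsilon_3}$ is the number of $\{\varepsilon_1,\varepsilon_3\}$-coloured cycles in the boundary graph $\partial\Gamma$, and as recorded at the end of Subsection \ref{sec:genus} one has $\partial g_{ij}\geq \mathcal G(\partial M)+h$ for distinct $i,j\in\Delta_3$ (since $\mathcal G(\partial M)=\sum_k\mathcal G(\partial^k M)\leq\sum_k(\partial^k g_{ij}-1)=\partial g_{ij}-h$). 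Using $g_{\varepsilon_0\varepsilon_1\varepsilon_3},g_{\varepsilon_0\varepsilon_2\varepsilon_3}\geq m+h$, $\dot g_{\varepsilon_1\varepsilon_3 4}\geq h-1$, $\dot g_{\varepsilon_0\varepsilon_2 4},\dot g_{\varepsilon_1\varepsilon_2 4}\geq h-1$, and $\partial g_{\varepsilon_1\varepsilon_3}\geq\mathcal G(\partial M)+h$, I get
$$\rho_\varepsilon(\Gamma)\geq -1-4h+2\chi(M)+2(m+h)+(h-1)+\big(\mathcal G(\partial M)+h\big)+2(h-1)=\mathcal G(\partial M)+2\chi(M)+2m+2h-4,$$
which on minimising over $\varepsilon$ and $\Gamma$ yields part $(iii)$.

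The main obstacle is conceptual bookkeeping rather than any hard estimate: one must choose, \emph{for each part separately}, how to split each of the five terms in the Lemma \ref{lemma:main} formula between the "interior" bound of Lemma \ref{lemma:interior}, the rank bound of Proposition \ref{prop:rank}, and the boundary-genus bound — and verify that the chosen split is simultaneously valid (e.g.\ one cannot use the full $m+2h-1$ bound on $g_{\varepsilon_1\varepsilon_3 4}$ \emph{and} extract a $\partial g$ contribution from it, so part $(iii)$ must sacrifice $h-1$ relative to part $(ii)$ but gains $\mathcal G(\partial M)+h$, a net improvement whenever $\mathcal G(\partial M)+1>0$). A secondary point requiring care is part $(i)$: making precise the passage to $2M$ and checking that the dipole contractions relating $g'$ to $\bar g$ do not create a gap, i.e.\ that $\rho(\bar\Gamma)=\rho(\Gamma)$ on the nose (dipole moves preserve regular genus), so that the closed-manifold bound transfers verbatim.
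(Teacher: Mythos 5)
Your Parts (ii) and (iii) are correct and coincide exactly with the paper's proof: both start from Lemma \ref{lemma:main}, use $g_{\varepsilon_0\varepsilon_1\varepsilon_3},g_{\varepsilon_0\varepsilon_2\varepsilon_3}\geq m+h$ and $\dot g_{ij4}\geq h-1$, and then either bound $g_{\varepsilon_1\varepsilon_3 4}\geq m+2h-1$ (Part (ii)) or split it as $\dot g_{\varepsilon_1\varepsilon_3 4}+\partial g_{\varepsilon_1\varepsilon_3}$ with $\partial g_{\varepsilon_1\varepsilon_3}\geq\mathcal G(\partial M)+h$ (Part (iii)). The arithmetic in both cases checks out.

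Part (i), however, has a genuine gap. You propose to transfer the closed-manifold bound for $2M$ back to $M$ via the claim that ``$\rho(\bar\Gamma)=\rho(\Gamma)$ on the nose (dipole moves preserve regular genus).'' But $\Gamma$ and $\bar\Gamma$ represent different manifolds ($M$ with boundary versus the closed double $2M$) and are not related by dipole moves: the dipole eliminations relate $\Gamma'$ to $\bar\Gamma$, whereas the passage from $\Gamma$ to its double $\Gamma'$ is not a dipole move and changes the regular-genus computation entirely (the regular embedding of $\tilde\Gamma$ carries $\lambda_\varepsilon$ holes and different vertex contributions). A lower bound for $\mathcal G(2M)$ therefore gives no direct information about $\mathcal G(M)$. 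Your fallback ingredients --- $\bar g_{ijk}\geq\bar m+1$ together with $\bar g_{ijk}=g'_{ijk}-h$ and $\bar g_{ij4}=g'_{ij4}-2(h-1)$, hence $g'_{ijk}\geq\bar m+h+1$ and $g'_{ij4}\geq\bar m+2h-1$ --- are exactly the right ones, but you never specify the formula into which they are substituted, and your declared master formula, Lemma \ref{lemma:main}, is not sufficient: from $g'_{ijk}=2g_{ijk}$ and $g'_{ij4}=g_{ij4}+\dot g_{ij4}\leq 2g_{ij4}$ one only gets $g_{\varepsilon_0\varepsilon_1\varepsilon_3}+g_{\varepsilon_0\varepsilon_2\varepsilon_3}\geq\bar m+h+1$ and $g_{\varepsilon_1\varepsilon_3 4}\geq\tfrac12(\bar m+2h-1)$, which yields only $\rho_\varepsilon(\Gamma)\geq 2\chi(M)+\tfrac32\bar m-\tfrac52$, strictly weaker than the claimed $2\chi(M)+2\bar m-2$. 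The paper instead substitutes the $g'$-bounds into Lemma \ref{lemma:rhoepsilon}, i.e.\ into $\rho_\varepsilon(\Gamma)=-1-4h+2\chi(M)+\tfrac12\sum_{i\in\mathbb{Z}_5}g'_{\varepsilon_i\varepsilon_{i+2}\varepsilon_{i+4}}-\tfrac12\partial g$, absorbing the $\partial g$ term into one of the two $g'_{ij4}$ summands (via Lemmas \ref{lemma:relgprimeg} and \ref{lemma:first}) to produce $\dot g_{ij4}\geq h-1$, and keeping the remaining four summands as $\tfrac12 g'$, each bounded below in terms of $\bar m$; that grouping is what produces the coefficient $2\bar m$. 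You need this step, or an equivalent one, for Part (i) to go through.
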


\begin{proof}
Let $(\Gamma,\gamma)\in \mathbb{G}_4$ be a crystallization of the PL $4$-manifold $M$  and $(\bar{\Gamma},\bar\gamma)$ be the induced crystallization for $2M$ as constructed above. Then, it follows from Proposition \ref{prop:rank} that $\bar m \leq \bar{g}_{ijk} -1$ for $i,j,k \in \Delta_4$.
This implies that $g^{\prime}_{ijk}\geq \bar{m}+h+1$ and $g^{\prime}_{ij4}\geq \bar{m}+2h-1$ for $i,j,k \in \{0,1,2,3\}$. Further, $\dot g_{ij4}\geq h-1$ for $i,j\in\{0,1,2,3\}$.  It follows from Lemma \ref{lemma:rhoepsilon}  that, for any cyclic permutation $\varepsilon=(\varepsilon_0,\dots,\varepsilon_4 =4)$ of the color set $\Delta_4$,
\begin{align*}
\rho_\varepsilon(\Gamma)=&-1-4h+2\chi(M)+\frac{1}{2}\sum_{i \in \mathbb{Z}_5}g^{\prime}_{\varepsilon_i \varepsilon_{i+2} \varepsilon_{i+4}}-\frac{\partial g_{\varepsilon_1 \varepsilon_2}}{2}\\
=&-1-4h+2\chi (M)+\frac{1}{2}(g^{\prime}_{\varepsilon_0 \varepsilon_2 4}+g^{\prime}_{\varepsilon_0 \varepsilon_1 \varepsilon_3}+g^{\prime}_{\varepsilon_0 \varepsilon_2 \varepsilon_3}+g^{\prime}_{\varepsilon_1 \varepsilon_3 4})+\dot{g}_{\varepsilon_1 \varepsilon_2 4}\\
& \mbox{  (by Lemmas \ref{lemma:relgprimeg} and \ref{lemma:first}) }\\
\geq &-1-4h+2\chi (M)+\frac{1}{2}(\bar{m}+2h-1+\bar{m}+h+1+\bar{m}+h+1 \\
& +\bar{m}+2h-1)+(h-1)\\
=&2 \chi (M)+2\bar{m}-2.
\end{align*}
Since this is true for every cyclic permutation $\varepsilon=(\varepsilon_0,\varepsilon_1, \dots, \varepsilon_4 =4)$, we have
$$\rho(\Gamma)=\min\{\rho_ \varepsilon (\Gamma)|\varepsilon \mbox{ is a cyclic permutation of } \Delta_4\}\geq 2 \chi (M)+2\bar{m}-2.$$
Since the crystallization $(\Gamma,\gamma)$ of $M$ is arbitrary,
$$\mathcal{G}(M)=\min\{\rho(\Gamma)|(\Gamma,\gamma) \mbox{ is a crystallization of } M\} \geq 2 \chi (M)+2\bar{m}-2. $$
This proves Part $(i)$.

It follows from Lemma \ref{lemma:main} that, for any cyclic permutation $\varepsilon=(\varepsilon_0,\dots,\varepsilon_4 =4)$ of the color set $\Delta_4$,\\
$$\rho_\varepsilon (\Gamma)=-1-4h+2\chi(M)+g_{\varepsilon_0 \varepsilon_1 \varepsilon_3}+g_{\varepsilon_0 \varepsilon_2 \varepsilon_3}+g_{\varepsilon_1 \varepsilon_3 4}+\dot g_{\varepsilon_0 \varepsilon_2 4}+\dot g_{\varepsilon_1 \varepsilon_2 4}.$$
Because $\Gamma$ is a crystallization of a connected compact PL $4$-manifold $M$ with $h$ boundary components, we have $g(\Gamma_{\{0,1,2,3\}})=1$ and $g(\Gamma_{\Delta_4\setminus\{c\}})=h$, for $0 \leq c \leq 3$. From Proposition \ref{prop:rank}, we know that for $a,b \in \Delta_4$, 
$$m \leq g(\Gamma_{\Delta_4\setminus\{a,b\}})-\big(g(\Gamma_{\Delta_4\setminus\{a\}})+g(\Gamma_{\Delta_4\setminus\{b\}})-1\big).$$ 
This implies,
$$g_{ijk}\geq m+h \mbox{ and } g_{ij4}\geq m+2h-1 \mbox{ for } i,j,k \in \{0,1,2,3\}.$$
Therefore,
$$\rho_\varepsilon(\Gamma)\geq 2\chi(M) -1-4h+2(m+h)+m+2h-1+2(h-1) =2\chi(M)+3m+2h-4.$$
Since this is true for every cyclic permutation $\varepsilon=(\varepsilon_0,\varepsilon_1, \dots, \varepsilon_4 =4)$, we have
$$\rho(\Gamma)=\min\{\rho_ \varepsilon (\Gamma)|\varepsilon \mbox{ is a cyclic permutation of } \Delta_4\}\geq 2\chi(M)+3m+2h-4.$$
Since the crystallization $(\Gamma,\gamma)$ of $M$ is arbitrary,
$$\mathcal{G}(M)=\min\{\rho(\Gamma)|(\Gamma,\gamma) \mbox{ is a crystallization of } M\} \geq 2\chi(M)+3m+2h-4.$$
This proves Part $(ii)$.

It follows from Lemma \ref{lemma:main} that, for any cyclic permutation $\varepsilon=(\varepsilon_0,\dots,\varepsilon_4 =4)$ of the color set $\Delta_4$,\\
$$\rho_\varepsilon (\Gamma)=-1-4h+2\chi(M)+g_{\varepsilon_0 \varepsilon_1 \varepsilon_3}+g_{\varepsilon_0 \varepsilon_2 \varepsilon_3}+g_{\varepsilon_1 \varepsilon_3 4}+\dot g_{\varepsilon_0 \varepsilon_2 4}+\dot g_{\varepsilon_1 \varepsilon_2 4}.$$
Thus, from Lemma \ref{lemma:first} we have
$$\rho_\varepsilon (\Gamma)=-1-4h+2\chi(M)+g_{\varepsilon_0 \varepsilon_1 \varepsilon_3}+g_{\varepsilon_0 \varepsilon_2 \varepsilon_3}+\dot g_{\varepsilon_1 \varepsilon_3 4}+\dot g_{\varepsilon_0 \varepsilon_2 4}+\dot g_{\varepsilon_1 \varepsilon_2 4}+\partial g_{\varepsilon_1 \varepsilon_3}.$$
%{
Since $\partial g_{\varepsilon_1 \varepsilon_3} \geq \mathcal G(\partial M)+h$, $g_{ijk}\geq m+h$ and $\dot g_{ij4}\geq h-1$ for  $i,j,k \in \{0,1,2,3\}$, we have
\begin{align*}
\rho_\varepsilon(\Gamma)& \geq  -1-4h+2\chi(M)+2(m+h)+3(h-1)+\mathcal G(\partial M)+h\\
&=\mathcal G(\partial M)+2\chi(M)+2m+2h-4.
\end{align*}
Since this is true for every cyclic permutation $\varepsilon=(\varepsilon_0,\varepsilon_1, \dots, \varepsilon_4 =4)$, we have
$$\rho(\Gamma)=\min\{\rho_ \varepsilon (\Gamma)|\varepsilon \mbox{ is a cyclic permutation of } \Delta_4\}\geq \mathcal G(\partial M)+2\chi(M)+2m+2h-4$$
Since the crystallization $(\Gamma,\gamma)$ of $M$ is arbitrary,
$$\mathcal{G}(M)=\min\{\rho(\Gamma)|(\Gamma,\gamma) \mbox{ is a crystallization of } M\} \geq \mathcal G(\partial M)+2\chi(M)+2m+2h-4.$$
This proves Part $(iii)$.
\end{proof}

\begin{lemma}\label{lemma:MXI}
Let $(\Gamma,\gamma)$ be a crystallization of a closed connected $3$-manifold $M$ with color set $\Delta_3$. Let $g_{01}$ and $g_{03}$ be the number of components of the graph $\Gamma_{\{0,1\}}$ and $\Gamma_{\{0,3\}}$ respectively. Then $\mathcal G(M\times [0,1])\leq 2(g_{01}+g_{03})-4$.
\end{lemma}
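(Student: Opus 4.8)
The plan is to exhibit a crystallization $(\Lambda,\lambda)\in\mathbb{G}_4$ of $M\times[0,1]$ built from $(\Gamma,\gamma)$, and then to read off an upper bound for $\rho(\Lambda)$ via Lemma~\ref{lemma:main}. Note that $M\times[0,1]$ is a compact PL $4$-manifold with $h=2$ boundary components, each PL homeomorphic to $M$, and that $\chi(M\times[0,1])=\chi(M)=0$ since $M$ is a closed $3$-manifold. Hence, once $\Lambda$ is available, Lemma~\ref{lemma:main} gives, for every cyclic permutation $\varepsilon=(\varepsilon_0,\dots,\varepsilon_4=4)$ of $\Delta_4$,
$$\rho_\varepsilon(\Lambda)=-9+g^{\Lambda}_{\varepsilon_0\varepsilon_1\varepsilon_3}+g^{\Lambda}_{\varepsilon_0\varepsilon_2\varepsilon_3}+g^{\Lambda}_{\varepsilon_1\varepsilon_3 4}+\dot g^{\Lambda}_{\varepsilon_0\varepsilon_2 4}+\dot g^{\Lambda}_{\varepsilon_1\varepsilon_2 4},$$
and, since $\mathcal G(M\times[0,1])\le\rho(\Lambda)\le\rho_\varepsilon(\Lambda)$ for every choice of $\varepsilon$, it suffices to produce one pair $(\Lambda,\varepsilon)$ for which the right-hand side equals $2(g_{01}+g_{03})-4$.

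To build $\Lambda$ I would start from the standard crystallization $(\Sigma,\sigma)\in\mathbb{G}_4$ of the closed $4$-manifold $M\times\mathbb S^1$ associated to $(\Gamma,\gamma)$, in which colour $4$ plays the role of the $\mathbb S^1$-direction and whose bicoloured and tricoloured subgraphs are, by construction, entirely determined by those of $\Gamma$. Deleting from $\Sigma$ a suitable set of $4$-coloured edges --- one ``meridian's worth'', i.e. the edges dual to a single fibre $M\times\{\mathrm{pt}\}$ --- produces a graph $(\Lambda,\lambda)\in\mathbb{G}_4$ which represents $M\times[0,1]$ and whose boundary graph $(\partial\Lambda,\partial\lambda)$ is the disjoint union of two copies of (a crystallization isomorphic to) $(\Gamma,\gamma)$; this is exactly the ``remove a few $4$-coloured edges'' operation invoked in Remark~\ref{remark:gem-complexity} in the case $\mathbb S^3\times\mathbb S^1$. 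One checks that $\mathcal K(\Lambda)$ has the $4\cdot2+1=9$ vertices required of a crystallization, each of the colours $0,1,2,3$ splitting into two vertices along the cut while colour $4$ does not split (if the construction does not already deliver a crystallization, one reduces it by dipole moves, which does not raise the regular genus).

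It then remains to compute $\rho_\varepsilon(\Lambda)$ for a cyclic permutation $\varepsilon$ chosen compatibly with the construction. Here $g^{\Lambda}_{ijk}=g^{\Sigma}_{ijk}$ for $i,j,k\in\{0,1,2,3\}$ is a known function of the $g^{\Gamma}$'s, while for the subgraphs involving colour $4$ one uses Lemma~\ref{lemma:first} to write $g^{\Lambda}_{ij4}=\dot g^{\Lambda}_{ij4}+\partial g^{\Lambda}_{ij}$, noting that $\partial g^{\Lambda}_{ij}=2g^{\Gamma}_{i'j'}$ for the appropriate relabelling of colours (because $\partial\Lambda$ is two copies of $\Gamma$) and that $\dot g^{\Lambda}_{ij4}$ differs from $g^{\Sigma}_{ij4}$ by a controlled amount, governed by how the deleted $4$-coloured edges cut the cubic subgraphs $\Sigma_{\{i,j,4\}}$. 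Substituting all of this into the displayed formula and then using the crystallization identities for the closed $3$-manifold $M$ from Proposition~\ref{prop:3mfd} (namely $g^{\Gamma}_{ij}=g^{\Gamma}_{kl}$ for $\{i,j,k,l\}=\{0,1,2,3\}$ and $g^{\Gamma}_{01}+g^{\Gamma}_{02}+g^{\Gamma}_{03}=2+p$), the expression should collapse to $2(g_{01}+g_{03})-4$, which gives the asserted inequality.

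The main obstacle is the combinatorial bookkeeping: one must pin down the crystallization of $M\times\mathbb S^1$ explicitly enough to know all of its $\{i,j\}$- and $\{i,j,k\}$-coloured component counts in terms of $\Gamma$, and then track carefully how the deletion of the chosen $4$-coloured edges changes these counts --- in particular how many of the resulting tricoloured subgraphs containing colour $4$ stay regular (interior) and how many become non-regular (boundary), since Lemma~\ref{lemma:main} weights the $g^{\Lambda}$ and the $\dot g^{\Lambda}$ terms differently. Arranging the ordering of $\Gamma$'s colours together with the cyclic permutation $\varepsilon$ so that precisely the $\{0,1\}$- and $\{0,3\}$-subgraph counts of $\Gamma$ survive in the final sum --- rather than, say, the $\{0,2\}$-count --- is the delicate point; everything after that is the routine simplification indicated above.
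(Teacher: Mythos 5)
Your overall strategy is the right one and matches the paper's: realize $M\times[0,1]$ by an explicit $5$-colored graph built from $(\Gamma,\gamma)$, note $h=2$ and $\chi(M\times[0,1])=0$, and evaluate $\rho_\varepsilon$ via Lemma \ref{lemma:main} for one well-chosen cyclic permutation. The paper's construction is more direct than yours: it takes five relabelled copies $G_0(1,2,3,-)$, $G_1(-,2,3,4)$, $G_2(0,-,3,4)$, $G_3(0,1,-,4)$, $G_4(0,1,2,-)$ of $\Gamma$ and joins $G_c$ to $G_{c+1}$ by $2p$ edges of color $c$ for $0\le c\le 3$, yielding a linear chain whose two degree-$4$ layers $G_0$ and $G_4$ carry the boundary; your route through a product crystallization of $M\times\mathbb S^1$ followed by deletion of one fibre's worth of $4$-colored edges would, if pinned down, produce essentially the same graph (the cut cyclic chain), so the two constructions are not genuinely different in substance.

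The gap is that everything quantitative is deferred. You never specify the crystallization of $M\times\mathbb S^1$ concretely enough to compute any component count, you do not identify the permutation $\varepsilon$, and you explicitly flag as ``the main obstacle'' and ``the delicate point'' precisely the computations that constitute the proof. Those computations are short once the chain construction is written down: one checks $\bar g_{012}=1+g_{01}$, $\bar g_{123}=1+g_{12}=1+g_{03}$ (using $g_{12}=g_{03}$ from Proposition \ref{prop:3mfd}), $\bar g_{014}=1+g_{01}+g_{03}$, and $\dot{\bar g}_{234}=\dot{\bar g}_{034}=1$, and then Lemma \ref{lemma:main} with $\varepsilon=(2,0,3,1,4)$ gives
\begin{equation*}
\rho_\varepsilon(\bar\Gamma)=-1-8+\bar g_{012}+\bar g_{123}+\bar g_{014}+\dot{\bar g}_{234}+\dot{\bar g}_{034}=2(g_{01}+g_{03})-4.
\end{equation*}
Note also that the asymmetry you worry about (why $g_{01}$ and $g_{03}$ survive rather than $g_{02}$) is resolved exactly by this choice of $\varepsilon$ together with the fixed color ordering $(0,1,2,3)$ in the chain; without exhibiting that choice, the claimed collapse to $2(g_{01}+g_{03})-4$ is an unverified assertion, so the proposal as written is a plan rather than a proof.
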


\begin{proof}
Let $2p$ be the number of vertices of $\Gamma$. Let $v_1,v_2,\dots,v_{2p}$ be the vertices of $\Gamma$. Now, choose a fixed order of the colors say, $(0,1,2,3)$. For $0\leq m\leq 4$, let $G_m(i,j,k,l)$ be the graph obtained from $(\Gamma,\gamma)$ by replacing vertices $v_r$ by $v_r^{(m)}$ and by replacing the colors $(0,1,2,3)$ by $(i,j,k,l)$, where $1\leq r \leq 2p$ and $0\leq i \neq j\neq k \neq l\leq 4$. Now consider the five 3-colored graphs $G_0(1,2,3,-)$, $G_1(-,2,3,4)$, $G_2(0,-,3,4)$, $G_3(0,1,-,4)$ and $G_4(0,1,2,-)$, where by `$-$', we mean the corresponding color is missing. Let $(\bar \Gamma,\bar \gamma)$ be a graph obtained by  adding $2p$ edges of color $c$ between $v_r^{(c)}$ and  $v_r^{(c+1)}$ for $0\leq c \leq 3$ and $1\leq r \leq 2p$. Then $(\bar \Gamma,\bar \gamma)$ is a crystallization  of $M\times [0,1]$. Let $\bar g_{ijk}$ be the number of components of the graph $\bar \Gamma_{\{i,j,k\}}$ and $\dot {\bar g}_{ijk}$ be the number of regular components of the graph $\bar \Gamma_{\{i,j,k\}}$  for $0\leq i,j,k\leq 4$. Then it is easy to see that $\bar g_{012}=1+g_{01}$,  $\bar g_{123}=1+g_{12}=1+g_{03}$, $\bar g_{014}=1+g_{01}+g_{03}$, $\dot{\bar g}_{234}=1$ and $\dot{\bar g}_{034}=1$. Here $M\times [0,1]$ is a connected compact $4$-manifold with two boundary components, i.e., $h=2$, and $\chi(M\times [0,1])=0$. It follows from Lemma \ref{lemma:main} that, for $\varepsilon=(2,0,3,1,4)$,
$$\rho_\varepsilon (\bar \Gamma)=-1-4h+2\chi(M\times [0,1])+\bar g_{012}+\bar g_{123}+\bar g_{014}+\dot {\bar g}_{23 4}+\dot {\bar g}_{034}=2(g_{01}+g_{03})-4.$$
Therefore, Then $\mathcal G(M\times [0,1])\leq 2(g_{01}+g_{03})-4$.
\end{proof}

\begin{theorem}
For $M=\mathbb{S}^2\times \mathbb{S}^1$, $\mathbb{RP}^3$ and $\mathbb{S}^1\times \mathbb{S}^1\times \mathbb{S}^1$, the regular genus $\mathcal G(M \times [0,1])=4 \, \mathcal G(M)$ (i.e.,
$\mathcal G(\mathbb{S}^2\times \mathbb{S}^1 \times [0,1]) =\mathcal G(\mathbb{RP}^3 \times [0,1]) =4$ and $\mathcal G(\mathbb{S}^1\times \mathbb{S}^1\times \mathbb{S}^1 \times [0,1]) =12$).
\end{theorem}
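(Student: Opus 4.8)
Set $N=M\times[0,1]$. Since $M$ is a closed connected $3$-manifold, $N$ is a connected compact PL $4$-manifold with $\partial N=M\sqcup M$, hence $h=2$ boundary components; moreover $\chi(N)=\chi(M)\cdot\chi([0,1])=\chi(M)=0$ and $\pi_1(N)\cong\pi_1(M)$, so the rank $m$ of $\pi_1(N)$ equals the rank of $\pi_1(M)$. Because the regular genus of a closed connected $3$-manifold equals its Heegaard genus, we have $\mathcal{G}(\mathbb{S}^2\times\mathbb{S}^1)=\mathcal{G}(\mathbb{RP}^3)=1$ and $\mathcal{G}(\mathbb{S}^1\times\mathbb{S}^1\times\mathbb{S}^1)=3$; in each of the three cases $\mathcal{G}(M)$ coincides with the rank of $\pi_1(M)$ (namely $1$, $1$ and $3$ respectively), so $4\,\mathcal{G}(M)=2\,\mathcal{G}(M)+2m$.

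\emph{Lower bound.} Apply Part $(iii)$ of Theorem \ref{theorem:main2} to the $4$-manifold $N$. Using $\mathcal{G}(\partial N)=\mathcal{G}(M\sqcup M)=2\,\mathcal{G}(M)$, $\chi(N)=0$ and $h=2$, it yields
\[
\mathcal{G}(M\times[0,1])\ \ge\ \mathcal{G}(\partial N)+2\chi(N)+2m+2h-4\ =\ 2\,\mathcal{G}(M)+2m\ =\ 4\,\mathcal{G}(M).
\]
(Part $(ii)$ would only give the weaker estimate $3m$, which for $\mathbb{S}^1\times\mathbb{S}^1\times\mathbb{S}^1$ is $9<12$.)

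\emph{Upper bound.} Here I would invoke Lemma \ref{lemma:MXI}: for every crystallization of $M$ with colour set $\Delta_3$ one has $\mathcal{G}(M\times[0,1])\le 2(g_{01}+g_{03})-4$, and by Proposition \ref{prop:3mfd} the relevant data are the three counts $g_{01}=g_{23}$, $g_{02}=g_{13}$, $g_{03}=g_{12}$, of which (after a suitable permutation of the colours) $g_{01}$ and $g_{03}$ may be taken to be any two. For $M=\mathbb{RP}^3$ and $M=\mathbb{S}^2\times\mathbb{S}^1$ I would use the classical $8$-vertex crystallization: by Proposition \ref{prop:3mfd} its invariants satisfy $g_{01}+g_{02}+g_{03}=2+8/2=6$, while $\min\{g_{01},g_{02},g_{03}\}-1=\mathcal{G}(M)=1$ forces $g_{01}=g_{02}=g_{03}=2$, so Lemma \ref{lemma:MXI} gives $\mathcal{G}(M\times[0,1])\le 2(2+2)-4=4$. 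For $M=\mathbb{S}^1\times\mathbb{S}^1\times\mathbb{S}^1$ I would use a crystallization realizing $\mathcal{G}(M)=3$ in which two of the three values $g_{01},g_{02},g_{03}$ equal $4$ (for instance one built from a genus-$3$ Heegaard splitting of the $3$-torus, whose associated crystallization has two of these counts equal to $g+1=4$, the third being $\ge 4$ since every crystallization of the $3$-torus has regular genus $\ge 3$); relabelling the colours so that these two counts are $g_{01}$ and $g_{03}$, Lemma \ref{lemma:MXI} gives $\mathcal{G}(\mathbb{S}^1\times\mathbb{S}^1\times\mathbb{S}^1\times[0,1])\le 2(4+4)-4=12$.

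Combining the two bounds gives $\mathcal{G}(M\times[0,1])=4\,\mathcal{G}(M)$ in each of the three cases, which is the assertion. The only step beyond bookkeeping is the choice of crystallization for the upper bound: for $\mathbb{RP}^3$ and $\mathbb{S}^2\times\mathbb{S}^1$ the minimal $8$-vertex crystallizations are completely explicit and well documented, so the genuine obstacle — and the part I expect to require care — is to exhibit (or cite) a crystallization of the $3$-torus that attains its regular genus and in which two of the three complementary colour-pair subgraphs have exactly four connected components.
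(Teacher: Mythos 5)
Your proposal is correct and follows essentially the same route as the paper: the lower bound via Part $(iii)$ of Theorem \ref{theorem:main2} and the upper bound via Lemma \ref{lemma:MXI} applied to suitable crystallizations of $M$. The one ingredient you flagged as needing care — a crystallization of $\mathbb{S}^1\times\mathbb{S}^1\times\mathbb{S}^1$ with two of the counts $g_{01},g_{02},g_{03}$ equal to $4$ — is exactly what the paper supplies by citing \cite[Figure 5]{bd14} (and \cite[Figure 2]{bd14} for the $8$-vertex crystallizations with $g_{ij}=2$, which you instead derived from Proposition \ref{prop:3mfd}).
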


\begin{proof}
For $M=\mathbb{S}^2\times \mathbb{S}^1$ and $\mathbb{RP}^3$,  $\mathcal G(M)=1$ and the rank of the fundamental group of $M$ is 1. Further $\mathcal G(\partial (M\times [0,1]))=\mathcal G(M)+\mathcal G(M)=2$, $h=2$ and $m=1$. Thus, by Theorem \ref{theorem:main2}, we have
$$\mathcal G(M \times [0,1]) \geq \mathcal G(\partial (M \times [0,1]))+2\chi(M \times [0,1])+2m+2h-4 = 2+0+2+4-4=4.$$
Further,  there is a crystallization $(\Gamma,\gamma)$ of $M$ such that $g_{ij}=2$ (cf. \cite[Figure 2]{bd14}). Thus, by Lemma \ref{lemma:MXI}, we have $\mathcal G(M\times [0,1])\leq 2(g_{01}+g_{03})-4=4$. Therefore, $\mathcal G(M\times [0,1])=4=4 \, \mathcal G(M)$.

For $M=\mathbb{S}^1\times \mathbb{S}^1\times \mathbb{S}^1$, $\mathcal G(M)=3$ and the rank of the fundamental group of $M$ is 3. Further $\mathcal G(\partial (M\times [0,1]))=\mathcal G(M)+\mathcal G(M)=6$, $h=2$ and $m=3$. Thus, by Theorem \ref{theorem:main2}, we have
$$\mathcal G(M \times [0,1]) \geq \mathcal G(\partial (M \times [0,1]))+2\chi(M \times [0,1])+2m+2h-4 = 12.$$
Further,  there is a crystallization $(\Gamma,\gamma)$ of $M$ such that $g_{01}, g_{03}=4$ (cf. \cite[Figure 5]{bd14}). Thus, by Lemma \ref{lemma:MXI}, we have $\mathcal G(M\times [0,1])\leq 2(g_{01}+g_{03})-4=12$. Therefore, $\mathcal G(M\times [0,1])=12=4 \, \mathcal G(M)$.
\end{proof}

In \cite{bb18}, we defined  weak semi-simple crystallization for closed connected PL 4-manifold. Generalising the definition, here we define weak semi-simple crystallization for a connected compact PL $4$-manifold with $h$ boundary components.

\begin{definition}
 Let $M$ be a connected compact PL $4$-manifold with $h$ boundary components and $m$ be the rank of the fundamental group of $M$. A crystallization $(\Gamma,\gamma)\in \mathbb{G}_4$ of $M$ with color set $\Delta_4$ is said to be a weak semi-simple crystallization of type I if $g_{012}, \, g_{123}=m+h$, $\dot g_{234}, \, \dot g_{034}=h-1$ and $g_{014}=\mathcal G (\partial M)+2h-1$. A crystallization $(\Gamma,\gamma)\in \mathbb{G}_4$ of $M$ with color set $\Delta_4$ is said to be a weak semi-simple crystallization of type II if $g_{012}, \, g_{123}=m+h$, $\dot g_{234}, \, \dot g_{034}=h-1$ and $g_{014}=m+2h-1$.
\end{definition}

Assuming $\varepsilon=(\varepsilon_0,\varepsilon_1,\varepsilon_2, \varepsilon_3, \varepsilon_4=4) = (2,0,3,1,4)$, from the proofs of Part (ii) and Part (iii) of Theorem \ref{theorem:main2}, the following result follows.

\begin{corollary}
 Let $M$ be a connected compact PL $4$-manifold with $h$ boundary components and $m$ be the rank of the fundamental group of $M$. Then 
 \begin{enumerate}[$(i)$] 
 \item  $\mathcal{G}(M)= \mathcal{G}(\partial M)+2\chi(M)+2m+2h-4$ if and only if $M$ admits a weak semi-simple crystallization of type I.
  \item  $\mathcal{G}(M)= 2\chi(M)+3m+2h-4$ if and only if $M$ admits a weak semi-simple crystallization of type II.
 \end{enumerate}
\end{corollary}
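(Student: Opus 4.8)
The plan is to extract the whole statement from Lemma~\ref{lemma:main} by specialising to the single cyclic permutation $\varepsilon=(2,0,3,1,4)$ --- the one already used in the proofs of Parts~(ii) and~(iii) of Theorem~\ref{theorem:main2}. For this $\varepsilon$ we have $\varepsilon_0\varepsilon_1\varepsilon_3=201$, $\varepsilon_0\varepsilon_2\varepsilon_3=231$, $\varepsilon_1\varepsilon_3 4=014$, $\varepsilon_0\varepsilon_2 4=234$ and $\varepsilon_1\varepsilon_2 4=034$, so Lemma~\ref{lemma:main} reads
$$\rho_\varepsilon(\Gamma)=-1-4h+2\chi(M)+g_{012}+g_{123}+g_{014}+\dot g_{234}+\dot g_{034},$$
and, rewriting $g_{014}=\dot g_{014}+\partial g_{01}$ by Lemma~\ref{lemma:first},
$$\rho_\varepsilon(\Gamma)=-1-4h+2\chi(M)+g_{012}+g_{123}+\dot g_{014}+\partial g_{01}+\dot g_{234}+\dot g_{034}.$$
These are precisely the two expressions that are estimated term by term in the proofs of Parts~(ii) and~(iii) of Theorem~\ref{theorem:main2}.

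For the "if" direction of~(ii), given a weak semi-simple crystallization $(\Gamma,\gamma)$ of type~II of $M$, I would substitute $g_{012}=g_{123}=m+h$, $g_{014}=m+2h-1$ and $\dot g_{234}=\dot g_{034}=h-1$ into the first identity to get $\rho_\varepsilon(\Gamma)=2\chi(M)+3m+2h-4$; since $\mathcal G(M)\le\rho(\Gamma)\le\rho_\varepsilon(\Gamma)$, Theorem~\ref{theorem:main2}(ii) then forces equality. For~(i), I note that $g_{014}=\mathcal G(\partial M)+2h-1$ together with Lemma~\ref{lemma:interior} ($\dot g_{014}\ge h-1$) and the trivial bound $\partial g_{01}\ge\mathcal G(\partial M)+h$ forces $\dot g_{014}=h-1$ and $\partial g_{01}=\mathcal G(\partial M)+h$; substituting these along with $g_{012}=g_{123}=m+h$ and $\dot g_{234}=\dot g_{034}=h-1$ into the second identity gives $\rho_\varepsilon(\Gamma)=\mathcal G(\partial M)+2\chi(M)+2m+2h-4$, and the same squeeze against Theorem~\ref{theorem:main2}(iii) yields equality.

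For the "only if" direction, assume $\mathcal G(M)=2\chi(M)+3m+2h-4$ (the type~II case; type~I is verbatim the same using the second identity and Theorem~\ref{theorem:main2}(iii)). I would choose a crystallization $(\Gamma,\gamma)$ of $M$ with $\rho(\Gamma)=\mathcal G(M)$ and a cyclic permutation $\varepsilon^*=(\varepsilon^*_0,\dots,\varepsilon^*_3,4)$ with $\rho_{\varepsilon^*}(\Gamma)=\rho(\Gamma)$. In the proof of Theorem~\ref{theorem:main2}(ii) the bound $\rho_{\varepsilon^*}(\Gamma)\ge 2\chi(M)+3m+2h-4$ is obtained, in the general-$\varepsilon^*$ analogue of the first identity, by replacing $g_{\varepsilon^*_0\varepsilon^*_1\varepsilon^*_3},\,g_{\varepsilon^*_0\varepsilon^*_2\varepsilon^*_3}$ by $m+h$ and $g_{\varepsilon^*_1\varepsilon^*_3 4}$ by $m+2h-1$ (Proposition~\ref{prop:rank}) and $\dot g_{\varepsilon^*_0\varepsilon^*_2 4},\,\dot g_{\varepsilon^*_1\varepsilon^*_2 4}$ by $h-1$ (Lemma~\ref{lemma:interior}); since the total is forced to equal the bound, each of these five inequalities is an equality. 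Relabelling the colours $\{0,1,2,3\}$ by the bijection $\varepsilon^*_0\mapsto 2$, $\varepsilon^*_1\mapsto 0$, $\varepsilon^*_2\mapsto 3$, $\varepsilon^*_3\mapsto 1$ (fixing the colour $4$) produces an isomorphic, hence still valid, crystallization of $M$ for which these five equalities read $g_{012}=g_{123}=m+h$, $g_{014}=m+2h-1$, $\dot g_{234}=\dot g_{034}=h-1$, i.e.\ a weak semi-simple crystallization of type~II. In the type~I case the same relabelling turns the equalities $g_{\varepsilon^*_0\varepsilon^*_1\varepsilon^*_3}=g_{\varepsilon^*_0\varepsilon^*_2\varepsilon^*_3}=m+h$, $\dot g_{\varepsilon^*_1\varepsilon^*_3 4}=\dot g_{\varepsilon^*_0\varepsilon^*_2 4}=\dot g_{\varepsilon^*_1\varepsilon^*_2 4}=h-1$ and $\partial g_{\varepsilon^*_1\varepsilon^*_3}=\mathcal G(\partial M)+h$ into the defining relations of type~I, using $g_{014}=\dot g_{014}+\partial g_{01}=\mathcal G(\partial M)+2h-1$.

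The step that needs the most care is the "only if" direction: one must be sure that the lower-bound computation of Theorem~\ref{theorem:main2} is genuinely a sum of the listed term-by-term estimates, so that equality of the total forces equality of each summand \emph{for the permutation $\varepsilon^*$ actually realising $\rho(\Gamma)$} (not merely for the fixed permutation $(2,0,3,1,4)$), and then that the colour relabelling sending $\varepsilon^*$ to $(2,0,3,1,4)$ transports these equalities exactly onto the defining relations of the two types of weak semi-simple crystallization. Everything else is a direct substitution into Lemma~\ref{lemma:main} followed by the squeeze $\mathcal G(M)\le\rho_\varepsilon(\Gamma)$ against Theorem~\ref{theorem:main2}.
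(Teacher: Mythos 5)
Your proposal is correct and follows essentially the same route as the paper, which simply fixes $\varepsilon=(2,0,3,1,4)$ and reads the equivalence off the term-by-term estimates in the proofs of Parts (ii) and (iii) of Theorem \ref{theorem:main2}. You have merely made explicit two details the paper leaves implicit --- that equality of the total forces equality in each summand for the minimizing permutation $\varepsilon^*$, and that a colour relabelling (fixing the colour $4$) transports those equalities onto the defining relations of the two types --- and both steps are carried out correctly.
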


Consider the crystallization $(\Gamma,\gamma)$ of $\mathbb{S}^4$ and $v_1,v_2 \in V(\Gamma)$  as in Figure \ref{fig:1}. Then $g_{ijk}=2$ and $g_{ij4}=3$ for $i,j,k\in\{0,1,2,3\}$. For $1\leq i \leq 2$, let $(\Gamma^i,\gamma^i)\in \mathbb{G}_4$ be a crystallization of a connected compact PL $4$-manifold $M_i$ with $h_i$ boundary components. Let $(\Gamma^i,\gamma^i)$ be a weak semi-simple crystallization of type I (resp., of type II) for $1\leq i \leq 2$. Then the graph connected sum $\Gamma^1\#_{uv_1} \Gamma \#_{vv_2} \Gamma^2$ gives a weak semi-simple crystallization of type I (resp., of type II) of $M_1\#M_2$, where $u\in V(\Gamma^1),v\in V(\Gamma^2)$ are interior vertices. Thus, we have the following result.

\begin{theorem}\label{theorem:connected-sum-2}
For $1\leq i \leq 2$, let $M_i$ be a connected compact PL $4$-manifold with $h_i$ boundary components. If the bound obtained in Part $(ii)$ (resp., Part $(iii)$) of Theorem $\ref{theorem:main2}$ is sharp for both $M_1$ and $M_2$ then the bound is sharp for $M_1\# M_2$ as well.
\end{theorem}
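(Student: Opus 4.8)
The plan is to convert the statement into a combinatorial construction by means of the Corollary proved just above, which identifies sharpness of the bound in Part~$(ii)$ (resp.\ Part~$(iii)$) of Theorem~\ref{theorem:main2} for a connected compact PL $4$-manifold $N$ with boundary with the existence of a weak semi-simple crystallization of $N$ of type~II (resp.\ type~I). Thus, granting that the relevant bound is sharp for both $M_1$ and $M_2$, I would take weak semi-simple crystallizations $(\Gamma^1,\gamma^1)$ of $M_1$ and $(\Gamma^2,\gamma^2)$ of $M_2$ of the common type, produce from them a weak semi-simple crystallization $(\widehat\Gamma,\widehat\gamma)$ of $M_1\#M_2$ of the same type, and then apply the Corollary in the opposite direction to obtain sharpness for $M_1\#M_2$.

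For the construction, which is already indicated in the paragraph preceding the statement, I would fix the $10$-vertex crystallization $(\Gamma,\gamma)$ of $\mathbb{S}^4$ of Figure~\ref{fig:1} together with the two marked vertices $v_1,v_2\in V(\Gamma)$; for this graph $\partial\Gamma=\emptyset$ and $g_{ijk}=2$, $g_{ij4}=3$ for all $i,j,k\in\{0,1,2,3\}$. Choosing internal vertices $u\in V(\Gamma^1)$ and $v\in V(\Gamma^2)$, set $\widehat\Gamma:=\Gamma^1\#_{uv_1}\Gamma\#_{vv_2}\Gamma^2$. Because the four splicing vertices are internal, $\widehat\Gamma\in\mathbb{G}_4$ is again a crystallization, its geometrical carrier $|\mathcal K(\widehat\Gamma)|$ is PL homeomorphic to $M_1\#\mathbb{S}^4\#M_2=M_1\#M_2$, and its boundary graph is the disjoint union $\partial\Gamma^1\sqcup\partial\Gamma^2$; this is the behavior recorded in the preliminaries and already used in the proof of Theorem~\ref{theorem:connected-sum-1}. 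Consequently $\partial(M_1\#M_2)=\partial M_1\sqcup\partial M_2$, so $h=h_1+h_2$, the rank of the fundamental group is $m=m_1+m_2$, $\mathcal G(\partial(M_1\#M_2))=\mathcal G(\partial M_1)+\mathcal G(\partial M_2)$, and $\partial g_{ij}(\widehat\Gamma)=\partial g_{ij}(\Gamma^1)+\partial g_{ij}(\Gamma^2)$ for all $i,j$.

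It then remains to verify the five defining equalities of a weak semi-simple crystallization for $\widehat\Gamma$. The key observation is that a graph connected sum at a pair of vertices decreases $g(\Gamma_B)$ by exactly $1$ for every $B\subseteq\Delta_4$ with $|B|=3$: the two glued facet-spheres $\partial\sigma(u)$ and $\partial\sigma(v_1)$ each contain precisely one edge of $\mathcal K$ labeled by the color pair $\Delta_4\setminus B$, these two edges are identified, and no other identification among such edges takes place. Hence, for all $i,j,k\in\{0,1,2,3\}$, $g_{ijk}(\widehat\Gamma)=g_{ijk}(\Gamma^1)+g_{ijk}(\Gamma)+g_{ijk}(\Gamma^2)-2$ and likewise for $g_{ij4}$. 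Substituting the type~I/II hypotheses on $\Gamma^1,\Gamma^2$ and the values $g_{012}(\Gamma)=g_{123}(\Gamma)=2$, $g_{014}(\Gamma)=3$ yields $g_{012}(\widehat\Gamma)=g_{123}(\widehat\Gamma)=m+h$, and $g_{014}(\widehat\Gamma)$ equal to $m+2h-1$ in the type~II case and to $\mathcal G(\partial(M_1\#M_2))+2h-1$ in the type~I case. For $\dot g_{234}$ and $\dot g_{034}$ I would use Part~$(i)$ of Lemma~\ref{lemma:first}, $\dot g_{ij4}=g_{ij4}-\partial g_{ij}$, together with the additivity of $g_{ij4}$ just noted and the exact additivity of $\partial g_{ij}$, which gives $\dot g_{234}(\widehat\Gamma)=\dot g_{034}(\widehat\Gamma)=(h_1-1)+(h_2-1)+3-2=h-1$. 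Thus $\widehat\Gamma$ is a weak semi-simple crystallization of $M_1\#M_2$ of the required type, and the Corollary completes the proof.

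The only point calling for genuine care is the topological claim in the construction: that a graph connected sum performed at internal vertices actually realizes the topological connected sum of the carriers and leaves the boundary graph untouched (so that $M_1\#M_2$ indeed has $h_1+h_2$ boundary components, $m_1+m_2$ as the rank of $\pi_1$, and $\partial M_1\sqcup\partial M_2$ as boundary). This is exactly the content recorded in the preliminaries and exploited in Theorem~\ref{theorem:connected-sum-1}; once granted, the combinatorial side reduces to the short, essentially unconditional component-count bookkeeping above and to matching the parameters $2$ and $3$ of the chosen $\mathbb{S}^4$-graph. Everything else --- additivity of the rank of $\pi_1$, of the number of boundary components, and of the regular genus of the boundary under connected sum --- is formal.
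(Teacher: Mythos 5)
Your proposal is correct and follows exactly the route the paper takes: the paper's entire proof consists of the paragraph preceding the statement, which invokes the Corollary characterizing sharpness via weak semi-simple crystallizations of type I/II and asserts that the graph connected sum $\Gamma^1\#_{uv_1}\Gamma\#_{vv_2}\Gamma^2$ through the $\mathbb{S}^4$-gem of Figure \ref{fig:1} (with $g_{ijk}=2$, $g_{ij4}=3$) is again weak semi-simple of the same type. Your writeup merely makes explicit the component-count bookkeeping ($g_B$ drops by one per splice, $\partial g_{ij}$ is exactly additive) that the paper leaves implicit, and all the resulting arithmetic checks out.
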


\begin{remark}\label{remark:regular-genus}
{\rm
For all the connected compact PL $4$-manifolds constructed in Remark \ref{remark:gem-complexity} and Lemma \ref{lemma:MXI}, our bound for regular genus (in Part (ii) or Part (iii) of Theorem \ref{theorem:main2}) is sharp. Thus, by using Theorem  \ref{theorem:connected-sum-2}, we get a huge class of connected compact PL $4$-manifolds with boundary, for which  our bounds for regular genus is sharp.
}
\end{remark}

\noindent {\bf Acknowledgement:} The authors would like to thank the anonymous referees for many useful comments and suggestions.
The first author is supported by DST INSPIRE Faculty Research Grant (DST/INSPIRE/04/2017/002471).

\end{document}